\newcommand{\reqnomode}{\tagsleft@true}
\numberwithin{equation}{section}
\newtheorem{theorem}{Theorem}[section]
\newtheorem{definition}{Definition}
\newtheorem{proposition}{Proposition}
\begin{document}
\title [On  Non-Newtonian  fluids] {On  Non-Newtonian  fluids  and  phase field  approach: \\
 Existence  and  Regularity}
\date{}            

\subjclass{Primary 35Q35, 35Q30; Secondary 35R35}
\keywords{Non- Newtonian  fluid,  Phase field  model,
  well-posed problem}

\author{Rodolfo Salvi}
\address {\hfill\break
Rodolfo Salvi\hfill\break
Depertment of Mathematics
 Politecnico  di  Milano \hfill\break
 20133 Milan -  ITALY }
\email {rodolfo.salvi@polimi.it}

\begin{abstract}
The object  of  this  paper  is  twofold.  Firstly, we study   a  class of
generalized Newtonian fluid related to " power law ". For  the  corresponding
non-Newtonian Navier-Stokes problems,
the  existence of a  weak and periodic solutions is  proved  in the  large
 for  a bounded domain  in  $\mathbb{R}^3$. Further, variational inequalities and  local-in-time well-posedness of the initial-boundary value problem are investigated .
 Secondly, we  deduce   a  generalization
 of  the Graffi-Kazhikhov-Smagulov model  based on  an advective-diffusion process  in  the  context  of
 multiphase  theory. Local in time well-posedness of the initial-boundary value problem is investigated.

\end{abstract}

\maketitle

\section{Introduction}

In an Eulerian description of the flow field of  a  fluid the balance  of a transferable property $\mathcal{P}$,
defined per  unit of mass in a unit control volume, reads
\begin{align}
 \partial_t(\rho \mathcal{P}) +  \nabla\cdot (\rho\mathbf{v}\mathcal{P})=- \nabla \cdot \mathbf{I}_{\mathcal{P}}+\mathbf{F}_{\mathcal{P}}.
 \label{eq:1}\\ \nonumber
 \end{align}
Here,  and  in the following, Cartesian   tensor  notation will  be  used,  and  the  summation convention with  respect to  repeated indices.

\noindent
$\rho$  stands  for  the  density  of  the fluid  and $\mathbf{v}$  stands  for  the  velocity  vector  field.

\noindent
 The left-hand side shows the change with time of the property (in a unit volume),
 and the change  due to the divergence in a convective transport  by the flow  through the
 boundaries  of the control volume. The first  term  on the right-hand side describes
 the divergence of the transport $\mathcal{I}_{\mathcal{P}}$ through these boundaries by molecular  effects. The  last term stands   for  any
 internal or external  process, or source,  that contributes  to the change  of  $\mathcal{P}$  in  the  control  volume.
The expression of  the left-hand side is, in  general, independent of   $\mathcal{P}$ and  the  process concerned.

\noindent
 The first  term on the right-hand side depends  on  the nature of  the  property, while the last term depends  on   both  the nature of the
 property and the process considered.

\noindent
 If we consider the mass as the transferable  property, thereby  making  no distinction between possible components,
  then $\mathcal{P}$   becomes equal  to unit,   being    the mass per unit of mass, and $\mathbf{I}_{\mathcal{P}}=0$.
   If,  further,  assume no  sources  or sinks of mass present in the flow,  also $\mathbf{F}_{\mathcal{P}}  =0$.  We then obtain  the equation for the  conservation  of   mass

$$\partial_t\rho +  \nabla\cdot \rho\mathbf{v}=0.$$

Concerning the  equation  of  the  conservation of  the  momentum, we  may  start from the  above  general  balance  equation
 (\ref{eq:1}), where in  this  case    we  take  for $\mathcal{P}$   any   of the   components $v_i$ of the momentum   per unit  of  mass.
 The transfer of momentum     by   molecular   effect
results  in  a stress, so  that  for  the  balance  equation in  $x_i$-direction we  have
   $(\mathcal{I}_{v_i})_j=-\sigma_{ji}$.  Here  $\mathbf{\sigma}:=(\sigma_{ji})$ with   $1\leq (j,i)\leq  n$.

\noindent
 The  stress  is  defined positive  if it is   directed
in  the  positive  direction  of  $x_i$.

Let $F_i$  an   external  force  working on a  unit  volume  of  the  fluid  in  $x_i$-direction.  The  balance  equation for  the moment $v_i$  then reads
\begin{align}
 \partial_t(\rho v_i) +  \partial_{x_j} (\rho v_jv_i)=\partial_{x_j} \sigma_{ji}+F_i.
 \label{eq:2}\\ \nonumber
 \end{align}

 The stress tensor $\mathrm{\sigma}$ can be divided  into a part  that corresponds to the average value of the normal
 stresses  for all directions, that is, a spherically-symmetric part, which is invariant under rotation of  the
 coordinate system, and  an anti-symmetric part.

The spherically-symmetric part is equal to  $1/3\sigma_{ii}$. It contains the thermodynamic pressure  $ \pi$
and an additional term proportional to $\nabla \cdot \mathbf{v}$. So:
  $$\frac{1}{3}\sigma_{ii}=-\pi +k\nabla \cdot \mathbf{ v},$$
$k$ is  referred as  volume  viscosity. Consequently,  the expression of the stress tensor can be written

$$ \sigma_{ji}=  (-\pi + k \nabla \cdot  \mathbf{v})\delta_{ji} + T_{ji},$$
where  $T_{ji} $ is the anti-symmetric part, so  that $T_{ii}=0$  and is called the deviator of stress tensor.
System (\ref{eq:2})   contains the deviator $\mathbf{T}$
of the stress tensor which is not expressed explicitly via the unknowns of the system.

In conclusion,
from the balance equations of mass, momentum  the motion
of  fluid is described by the  system of equations  in the Cauchy form
\begin{align}
 &\rho\partial_t\mathbf{v} +    \rho \mathbf{v}\cdot \nabla\mathbf{v}
 - \nabla \cdot(\mathbf{T} -  (\pi -k\nabla\cdot \mathbf{ v} )\mathbf{I} )=\rho\mathbf{f},\label{eq:2a}\\ \nonumber
 &\partial_t\rho +  \nabla\cdot \rho\mathbf{ v}=0, \\ \nonumber
\end{align}
where $\mathbf{v}$ is the velocity vector of a particle in a point $ x$ at time $t$  and
$v_1, v_2,\ldots,v_n$ are the components of $\mathbf{ v}$, $\pi $ is the fluid pressure ,  $\mathbf{ f}$ is the density of external force and  $\mathbf{I}$  is  unit  matrix and  $\rho$ is the  mass  density.  $\nabla \cdot \mathbf {T}$ stands
for the vector
 $$
 (\sum_{j=1}^n \partial_{x_j}T_{j1}, \sum_{j=1}^n\partial_{x_j}T_{j2},\ldots,\sum_{j=1}^n\partial_{x_j}T_{jn}),
 $$
 whose coordinates are the divergence of rows of the matrix $\mathbf{ T}=( T_{ji})$.

 For $n=3$, the above four scalar equations contain twelve  unknown quantities. To achieve a unique solution
 up to a constant in the pressure, further relations between the velocity field $\mathbf{v}$ and the stress
 tensor $\mathbf{T}$ are necessary.

System  (\ref{eq:2a}) describes flows of  all kinds of  fluids. But  it contains  the  deviator
  $\mathbf{T}$ of the stress tensor which is not
 expressed explicitly  via the unknowns of the system.
Since the  stresses   and  deformations in  a  fluid
 (apart  external  forces) depend on  each  other
it is reasonable related the stress tensor to the spatial deformations, i.e. the symmetric part $\mathbf{D}:=(d_{ij})$ of the spatial variation    and is called deformation tensor. So,  as a rule, to express  the deviator of the stress tensor via the
 unknowns of the system, one uses relations between the deviator of stress tensor, the rate  of deformation tensor or  deformation  rate
  $$\mathbf{D}=(d_{ji})_{j,i=1}^n, \quad d_{ji}= \frac{1}{2}[\frac{\partial v_j}{\partial {x_i}} + \frac{\partial v_i}{\partial {x_j}}]=
 \frac{1}{2}(\nabla \mathbf{u} + (\nabla \mathbf{ u})^T)_{j,i}.
 $$

($(\cdot,\cdot)^T$  is  the  transpose matrix).

By establishing  the  connection between the deviator  and  the    deformation rate,  we  determine  the  type of  fluid.

  These relations depending on the considered substance, are called
{ \it Constitutive or Rheological equations}. These relations are hypothesis to be checked out for  concrete
fluids by experimental data.

A  relation,  largely  used  in the  last  150  years, is a linear  relation

$$\mathbf{T}=2\mu \mathbf{D} -\frac{2}{3}\mu \nabla\cdot \mathbf{v}\mathbf{I}, $$

\noindent
and  the  constant $\mu$  is referred as  the  dynamic viscosity.

  Such  fluids  are   classically  known as  the  Newtonian  fluids.
Newtonian fluid has been the main object of mathematical research in hydrodynamics. It has the determining relation
$$
 \mathbf{\sigma} = 2\mu\mathbf{D}+ ((k-\frac{2}{3}\mu) \nabla\cdot \mathbf{v}+\pi)\mathbf{I}.
 $$
With this expression  of $\mathbf{T}$ the system (\ref{eq:2a}) assume the following form
\begin{align}
 &\partial_t \rho\mathbf{v} + \nabla\cdot (\rho \mathbf{v}\otimes  \mathbf{v})
 - \mu\Delta \mathbf{v} + \nabla \pi - (k+\frac{1}{3}\mu)\nabla\nabla \cdot\mathbf{v}  =
  \rho  \mathbf{f},\label{eq:3a}\\ \nonumber
  &\partial_t \rho +\nabla\cdot \rho \mathbf{v}=0.\\ \nonumber
\end{align}
For  Newtonian fluids $k\simeq 0$.

This system of equations (Navier-Stokes system)  describes flows, under moderate velocities,
of most viscous  fluids.

\noindent
The Navier-Stokes equations are generally accepted as a right governing equations for compressible
or incompressible motion of viscous fluids.  The classical Navier-Stokes equations are essentially the simplest
equations describing the motion of a fluid.

Physically, it is assumed that
the constituent particles of the fluid are too small for their
dynamics can interact substantially with macroscopic motion or it  makes  no difference what
the  fluids  consisted of. But, the Newtonian model
is inadequate for fluids having a complex chemical structure.  Many substances  of  industrial significance,
especially  of  multi-phase  nature (forms, emulsions,  dispersions etc) do  not  conform to  the  Newtonian  postulate of
the linear  relationship  between  $\mathbf{T}$  and  $\mathbf{D}$. Accordingly, these fluids
are  variously  known  as non-Newtonian, nonlinear, complex or  rheologically  complex  fluids.

In general,  there are essentially  two  possibilities  to get constitutive equations; either
based on microstructure  of the substances  or based on the phenomenological  rheology.
The description of mechanical properties  of different materials in different deforming regimes is
the subject of rheology. The motion of a body as a whole is not considered, (it is the subject of theoretical
mechanics), rather it is the relative motion of particles of a body that is under consideration.
Rheology determines the dependence between forces acting on a material body and its deformations.
 So far, one of the main problem of rheology is the determination of links between stress, deformation, velocity of
 deformation, and their derivatives
with respect to time.
The  simplest possible  deviation from  the Newtonian  fluid  behavior  occurs  when  the  apparent  viscosity  is  not  constant.
Such  fluids  are  called non-Newtonian.

System for  which  the  value  of $\mathbf{T}$ at  a point within the fluid is determined only by the current value of $ \mathbf{D}$ or
$\dot {\gamma}$ (shear stress) at  that  point. These  substance  are  variously known as  purely  viscous, inelastic, time-independent or
generalized Newtonian  fluids.  Their  shear behavior can  be  described  by  the  relation
$$  \mathbf{T}\sim f(\dot \gamma).$$
Depending  upon  the  form of the  above function  $f$ three  possibilities  exist.

\subsection{Shear-Thinning  Fluids}

 This  is perhaps  the most widely encountered type   of time-independent  non-

 \noindent
 Newtonian  fluid  behavior  in  engineering  practice.
 It  is characterized  by  a  viscosity   which  gradually decreases  with increasing shear rate. This behavior
 is  due  to the  progressive orientations  of the molecules in the motion direction.

Shear  thinning  behavior  fluids  can  be  characterized  by  viscosity  of  the  form
\begin{align}
 &\mu= m(\dot \gamma)^{p-1}\label{eq:p} \\ \nonumber
\end{align}
 called " Power  law " or " Ostwald-de Waele  Equation ".

 Clearly, $0<p<1 $ will  yield  $d\mu/d\dot \gamma <0$, i.e. shear-thinning  behavior  fluids are  characterized by  a  value
  of  $p$  (power-law index)  smaller  than  unity, for  commonly used  $\mu$.

  \subsection{Shear-Thickening or  Dilatant   Behavior}

  For  some  fluids,  the  opposite  behavior  take  place;  the  viscosity  increases   with  the  increasing share  rate, and  hence  the  name  " Shear-Thickening ".

  Of  the  time-independent  fluids,  this  sub-class has  generate  very little  interest.  The  currently available limited  information suggests
that  it  is  possible  to  approximate the  behavior   for  these systems also by   the power  law model  with the  power-law  index  $p$
   taking  on  values  greater  than  unity.

  \subsection{Visco-plastic  fluid  behavior}

    This  type  of  non-Newtonian fluid  behavior  is  characterized by  the  existence  of  a  threshold stress  which  must  be  exceeded   for  the  fluid to  deform or   flow. Essentially  such  substance   will behave   like  an  elastic solid   when  the externally  applied  stress  is less  than
 the  yield  stress.  Example  is  the  Binghan  fluid.

 \subsection{Time-Dependent  Behavior}
In case of incompressible pure viscous
fluids,  for  constitutive equations on the
foundation of differential models, the stress can  be expressed as power series involving
increasing powers of the deformation tensor. Continuing, this conception leads to the
usage of convected derivatives based on the application of the invariance of material
properties with respect to the frame reference.
  Many  substances, notably  in  food,  pharmaceutical product  manufacturing  sectors  display flow  characteristics
   which  cannot  be  described by  a  simple mathematical expression as $f$. This is  so  because    their  viscosities   are
   not  only  functions  of  the  applied  shear  stress  but  also of  the  duration  for  which  the  fluid   has  been  subjected
     to  shearing   as   well  as   his  previous kinematic  history.  These  fluids present  two typical   features: stress  relaxation  and  creep.  The first   phenomenon is  the  progressive rather  than  instantaneous  stress  decay  when  the  fluid  deformation  suddenly  vanishes. The  second  effect, dual  to the  former,  consists  in  a  non linear  increasing  deformation,  though  the  fluid undergoes   a  constant  stress.
      One  of  the  simplest  ways  of  modifying  the  constitutive    equation of  a Newtonian fluid in  order  to  account  for the  " memory  properties "  of  a  given  fluid,  is  to  add a  term  containing  a  time  derivative or  some  sort of time  derivative of the  stress.
      The  addition of  the term containing  the  time  derivative of  the  stress   makes  the  equations capable of  representing  the  phenomenon
      of  stress   relaxation.  This  type of  model  is  called  differential  model.    Rate type  of constitutive  equation  containing  time  derivative  of  first  order  only, or  of the  general  form

$$\frac{d {\mathbf{\tau}}}{dt}= f(\mathbf{\tau}, kinematic\;  tensors).$$

    In  general, $\tau$  is  a  non-deviatoric  extra  stress.
 $\mathbf{\tau}$  and  the kinematic  tensors  are  calculated  in  $t$. It  follows   that    the stress   at  any time  $t$ can, in  principle, calculate  from  a
 knowledge  of  the  stress  at  previous time   $\bar t$.
For  this  fluids  the  stress is  expressed   in integral form.
$$\mathbf{\sigma} =\int_0^{\infty} f(s)\mathbf{G}(t-s)ds.$$
The  function $\mathbf{G}$  is called  the  stress   relaxation modulus.
The  physical    assumption   underlying  the  form  is  clear:  it  is  assumed that  all  the  deformations
   which  occurred  in the past  as  measured by the Cauchy tensor contribute linearly to  the present  value  of  the  stress.

   The  weighing function  $f(s)$ is a  material  function  which  completely determines  a  particular  material obeying such  a  linearity rule.

In  this  paper  we  consider  two  classes   of  non  Newtonian fluids. The  first a  generalized Newtonian  fluid.
The second is  a model deduced in  the  context of multi-phase flow.

The plain  for  the paper is as   follows.  Section 2  contains some preliminary results  and  notations.  Section 3
 is  devoted  to  the  formulation of  the  problem  and  contains   the proof of the  long  life existence of  a  so-called  weak  solution. In section 4 we  study  the  existence of  periodic solution. Section 5 treats the  existence problem for variational inequalities. Section 6  a  well-posedness  problem  is  discussed. Finally, in  section 7 we  deduce a   Graffi-Kazhikov-Smagulov  model  and discuss  the  existence problem.
\section{Preliminaries}
 In the sequel we will assume that
 $ \Omega $ denotes an open set in $ \mathbb{R}^n $ which is generally assumed to be bounded hence ${\bar \Omega} $
 is compact. $ \Gamma $ denotes the boundary of $ \Omega $.  Moreover, it is assumed that $ \Omega $ is a smooth domain
 of class $C^k$ with $k $ a positive integer. Furthermore, we assume that the unit normal
vector field $\mathbf{ n}(x) $ with $x\in \Gamma $ is inward pointing on $\Gamma$. If it is necessary we consider also an
extension of $ \mathbf{n} $ in a neighborhood of $ {\bar \Omega }$.
 With symbols $c$, $c_0$, $c_1$, etc., we denote generic
positive constants. When we wish to emphasize the dependence of $c $ on some
parameter $r$, we shall write $c(r)$.

 We do not distinguish in our notations
whether the functions are  $ \mathbb{R}^m$-valued   (vector) or $\mathbb{R}^m\times\mathbb{R}^m$-valued  (tensor  or
matrix). So  $\mathbf{v}=(v_i)$ denotes a vector with components $v_i$;   $\mathbf{T}=  (T_{ji})$   denotes  a  tensor  with
components ${T_{ji}}$.  Furthermore, $|\mathbf{u}|^2=\sum_{i=1}^n|u_i|^2$ and   for a  matrix or  $2$-tensor
$\mathbf{A}=(a_{ji}),  \;1\leq j,i\leq n$  the  Frobenius norm  is
$$\|A\|^2=a:a =a_{ji}a_{ji}=\sum_{j,i=1}^n|a_{j,i}|^2.$$
The  form  of $\|A\|^p$  is  obvious.

We define $ C^{\infty}_0(\Omega)$ to be the linear space of infinitely many times differentiable functions (vectors, tensors) with
compact supports in $ \Omega $. Now let  $ (C^{\infty}_0(\Omega))'$
denote the dual space of  $ C^{\infty}_0(\Omega)$, the space of
distributions on $\Omega$.
In  our notations $C^{\infty}(\Omega;R^n) $  coincides with $C^{\infty}(\Omega)\times C^{\infty}(\Omega)\ldots \times C^{\infty} (\Omega)$.
In  what follows  we do  not  specify  this  extension.
We denote, in general, by $<\cdot,\cdot> $ the duality
pairing between  $ (C^{\infty}_0(\Omega))'$ and  $C^{\infty}_0(\Omega)$.

Let $\mathbf{\alpha} = (\alpha_1,...,\alpha_n )\in \mathbb{N}^n$ and set  $|\alpha | =
\sum_{i=1}^n \alpha_i $. We set $$\frac {\partial}{\partial x_{i}} =
\partial_{x_i}=\partial_i,\quad  D_x^{\alpha} = D^{\alpha}=
\partial ^{|\alpha|}_{x^{\alpha_1}_1,...,x_n^{\alpha_n}},
$$
 $ \nabla = ( \partial_{{x_1}},...,\partial_{ {x_n}}) $ the
gradient operator and $\nabla \cdot $ the divergence operator.

\noindent
 We denote
${\mathcal C}^{\infty}_0 $  the linear subspace of divergence free
functions of $C^{\infty}_0 $.

\noindent
For  any $s, q $, $ s \geq 0 ,q \geq 1$ , $ W^{s}_{q}(\Omega)$   denotes
the usual Slobodeckii-Sobolev space of order $s $ on $ L_q(\Omega)$.
Further, the norm (defined intrinsically involving first order
differences of the highest-order derivatives)
 on $ W^{s}_{q}(\Omega)$ is denoted by
$\|\phi\|_q^s $.  If   $s=1$   we  write $\|\phi\|_q$ and $|\phi|_q$  denotes  the norm in  $L_q(\Omega)$-spaces.
When $ q = 2 $ $W^s_2(\Omega)$ is
usually denoted by $ H^s(\Omega) $ and we drop the subscript $q=2$
when referring to its norm. $ H^s (\Omega) $ ($s\in N$) is a Hilbert
space for the scalar product
$$ ((u,v))_s =  \sum_{|\alpha|\leq s} \int_{\Omega}
D^{\alpha}u D^{\alpha}vdx .$$
 In particular, in $L_q(\Omega)$, we
write the $L_q$-duality pairing $ (u,v)_q = \int_{\Omega}uvdx$ with
$u\in L_q$, and $ v\in L_{q'}$ with  $q' = q/(q-1)$ and the norm $
|v|_q $.

\noindent
Further, we define $ W^{s}_{q,0}(\Omega) $ the closure of $
C^{\infty}_0(\Omega) $ for the norm $\|\cdot\|^s_q $.

\noindent
We denote $ W^{-s}_{q'}(\Omega) $ the dual space of $
W^{s}_{q,0}(\Omega) $ and $ \|\cdot\|^{-s}_{q'}$ denotes its norm
where $q'$ satisfies $1/q + 1/{q'} = 1 $.

\noindent
Moreover  $W^{2-2/q}_q(\Omega)\hookrightarrow BUC^1(\Omega)$  with  $q>n+2$
($BUC$=bounded  uniformly continuous).

The  embedding $$ W^1_p(\Omega)  \subset  L_q(\Omega)$$
 is   compact if $p<n$  and  $ q<\frac{np}{n-p}$ or  if $p\geq n$ and $\infty >q > 1$.

In   particular
 the  embedding
 $$ L_{q^{'}}(\Omega)\subset   W^{-1}_{p^{'}}(\Omega)$$
 is  compact.

Let us introduce the following spaces of divergence-free functions.
 We denote by
 $$ V^s_q = \{\mathbf{v}|\mathbf{v}\in W^s_{q,0}(\Omega), \nabla \cdot \mathbf{ v} =0 \}. $$
$ V_q^s $ is the closure of ${\mathcal C}^{\infty}_0(\Omega)$ (divergence  free) for the norm
$\|\cdot\|^s_q $, and it is a closed subspace of $ W^s_q(\Omega) $.

Moreover in  the  sequel
  we  use  the  triplet

$$V_q^s\subset  H\subset (V_q^s)^{'}.$$
We set $V^s_2:=V^s$, $ V^1 := V $ and $ V^0 := H $. Moreover, we introduce the projection operator $P_q
 \;(P_2\equiv P): L_q \rightarrow V^0_q$. It is well known that the operator $P_q$ is
 continuous on $L_q$ and the subspace $V^0_q$ is complemented. Thus, the  following
decomposition of $L_q$
$$L_q = Range {P_q} \oplus Ker {P_q}  $$
holds true.

It is interesting to observe  that $ KerP_q =\{\mathbf{\phi}\in L_q|\mathbf{\phi}
=\nabla p_1 +\nabla p_2\}$ where $p_1,p_2$ are generalized solutions
of the problems
$$ \Delta p_1 = 0, \; \partial_\mathbf{n} p_1 = \mathbf{f}\cdot\mathbf{ n} \; on \;\Gamma,
$$
and
$$
\Delta p_2 = \nabla \cdot\mathbf{ g}, \; p_2= 0 \; on \; \Gamma,
$$
respectively.

\noindent
Here $\mathbf{g},\mathbf{f} \in L_q$ and $\mathbf{f}\cdot \mathbf{ n}\in H^{-1/2}(\Gamma)$ satisfying $<f\cdot n,1>_{\Gamma}=0$.

We recall that analogous decomposition of $L_q$ holds working with
the subspace
 $\bar V= \{\mathbf{\phi}| \mathbf{\phi}\in L_q, \nabla\cdot \mathbf{\phi} =0\}$.

We further define the Stokes operator on $ L_q$
$$A_q=-P_q\Delta,
$$
with domain $D(A_q)=W^2_q(\Omega)\cap V^1_{q}$. If  $q=2$  we  set  $A_2:=A.$

For any Banach space $X$ and for any $T>0 $ we denote by
$L_r(0,T;X)$ the set of $X$-valued functions defined a.e. in $[0,T]$
and $L_r$ summable in sense of Bochner. Frequently we consider $ X =
W^s_q(\Omega)$.  In such cases, for any $\phi\in
L_r(0,T;W^s_q(\Omega))$,  $\phi$ stands for the function $\phi(t)$
or $\phi(\cdot,t)$.

Throughout the paper we denote $Q_t = (0,t)\times \Omega $ and the
parabolic

\noindent
Slobodeckii-Sobolev space
$W_q^{s,r}(Q_T)$ of order $ s$
in space variable and of order $r$ in time variable on $L_q$. We
will denote  $ \|\cdot\|^{s,r}_q$ the norm  in this space. In the
following we make use of  the inequality, for $q >3$ ,
$$sup_{(x,t)\in Q_T}|v| \leq \|v\|_{W^{2,1}_q(Q_T)}.$$
Moreover,  we  set
$$Z(T):= W^1_q(0,T; L_q(\Omega))\cap L_q(0,T;W^2_q(\Omega))\hookrightarrow C((0,T; W^{2-2/q}_q(\Omega)).$$
In  this  case  the  embedding constant  can blow up  as  $t\rightarrow  0^+$ if  the  functions are     different     from   zero  in  $t=0$ .

\noindent
$W^{2-2/q}_q(\Omega))$  can  be  considered  as  time-trace   of  $Z(T)$.

\noindent
In addition, let us consider the affine space
$$
\tilde H^k(\Omega) = \{\phi\in H^k(\Omega), \partial_n \phi=0
\;{\textrm on}\; \Gamma, \int_{\Omega}\phi dx =c\}.
$$
In this manner, the functions in $\tilde H^k$ are uniquely fixed and
 we can  not distinguish the norms $\|\phi\|_{H^2} $ and $|\Delta \phi|_2$  , $\|\phi\|_{H^3}$ and
 $|\nabla \Delta\phi|_2$  in $\tilde H^3$.
Throughout the paper we shall use  the following propositions.

\begin{proposition}[Gagliardo-Nirenberg inequality] Let
$\Omega \subset R^n$ bounded and sufficiently regular. The
multiplicative inequality,
\begin{align}
\sum_{|\alpha| = r}|D_x^{\alpha}\phi|_{q} \leq
c|\phi|_{q_1}^{1-\theta}(\sum_{|\alpha| =l}|D_x^{\alpha}\phi|_{q})^{\theta} ,
\label{eq:i}\\ \nonumber
\end{align}
for $ 1\leq q_1, q_2 \leq \infty, \quad 0\leq r \leq l,$
$$
\frac{n}{q} - r = (1-\theta) \frac{n}{q_1} + \theta( \frac{n}{q_2}-
l),
 \quad \frac{r}{l} \leq \theta \leq 1,
$$
\noindent holds with the following exceptions:

a) if $ r=0,\; l < \frac{n}{q_2}$, and $ q_1 = \infty$ and $\Omega$
unbounded, we assume in addition that or $ \phi \rightarrow 0$ as $
x \rightarrow \infty$ or $ \phi \in L_p $ for some $p > 0$;

b) if $1 < q_1 <\infty $  and $l-r -\frac{n}{q_2} $ is a
non-negative integer, then
 does not hold for $ \theta
=1$.
\end{proposition}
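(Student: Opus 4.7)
The plan is to reduce the estimate to the case of $C^\infty_c(\mathbb{R}^n)$ functions via an extension-plus-mollification argument, establish the base case $(r,l)=(0,1)$ by combining the Sobolev embedding with H\"older interpolation, and finally bootstrap to arbitrary orders by induction on $l-r$ together with an application of the base case to derivatives of $\phi$ when $r\ge 1$.

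For the reduction, since $\Omega$ is bounded and of class $C^k$ with $l\le k$, I would use a continuous extension operator $E\colon W^l_{q_2}(\Omega)\to W^l_{q_2}(\mathbb{R}^n)$ that is simultaneously bounded on $L^{q_1}$ (e.g.\ Stein's total extension). Mollification then yields $C^\infty_c(\mathbb{R}^n)$-approximants of $E\phi$ converging in every norm that appears in \eqref{eq:i}, so the inequality on $\Omega$ follows from its analogue on $\mathbb{R}^n$, with a constant depending on $\Omega$ only through the operator norm of $E$.

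For the base case $r=0$, $l=1$, I would start from the Sobolev embedding $|\phi|_{q_2^*}\le c\,|\nabla\phi|_{q_2}$, valid for $q_2<n$ with $1/q_2^*=1/q_2-1/n$, derived classically from the Loomis-Whitney inequality applied to the pointwise identity $\phi(x)=\int_{-\infty}^{x_i}\partial_i\phi\,dt$. H\"older interpolation
$$|\phi|_q\le |\phi|_{q_1}^{1-\theta}\,|\phi|_{q_2^*}^{\theta},\qquad \frac{1}{q}=\frac{1-\theta}{q_1}+\frac{\theta}{q_2^*},$$
then yields \eqref{eq:i}; the algebraic relation on $n/q$ stated in the proposition is exactly this H\"older identity after substituting $1/q_2^*=1/q_2-1/n$. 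When $q_2\ge n$, the Morrey-type estimate derived from the Newtonian potential representation of $\phi$ plays the role of the critical Sobolev embedding and, combined with the same H\"older step, gives the missing range.

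The general case I would handle by induction on $l-r$, applying the base case to $D^\alpha\phi$ with $|\alpha|=r$ and absorbing the intermediate-order terms via the already-proved $(0,l-r)$ instance together with Young's inequality. The main technical obstacle is exception (b): the borderline $1<q_1<\infty$ with $l-r-n/q_2$ a non-negative integer and $\theta=1$, where the endpoint Sobolev embedding into $L^\infty$ fails (the natural target being a $BMO$ or Zygmund-type space), so the limit $\theta\to 1$ in the H\"older interpolation diverges. The only remedy is to exclude $\theta=1$, as the statement does. Exception (a) is the parallel failure of integrability at infinity when $q_1=\infty$ on an unbounded domain, which is addressed by the extra decay hypothesis written into the statement.
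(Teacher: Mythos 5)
The paper offers no proof of this proposition: it is stated in the Preliminaries as the classical Gagliardo--Nirenberg inequality, to be quoted later, so there is no in-paper argument to compare yours against. Judged on its own terms, your outline follows the standard route, but the reduction step contains a genuine gap. A total extension operator $E\colon W^l_{q_2}(\Omega)\to W^l_{q_2}(\mathbb{R}^n)$ controls $\sum_{|\alpha|=l}|D^{\alpha}E\phi|_{L_{q_2}(\mathbb{R}^n)}$ only by the \emph{full} norm $\|\phi\|^{l}_{q_2}$, not by the highest-order seminorm $\sum_{|\alpha|=l}|D^{\alpha}\phi|_{L_{q_2}(\Omega)}$ alone; consequently, pulling the $\mathbb{R}^n$ inequality back through $E$ yields only the inhomogeneous bounded-domain form, with $\|\phi\|^{l}_{q_2}$ (equivalently, an additive lower-order remainder $+\,c|\phi|_{q_1}$) on the right, not the purely multiplicative inequality \eqref{eq:i} as written. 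This is not a repairable defect of your argument: for $\theta>0$ the multiplicative form is simply false on a bounded domain --- take $\phi$ a nonzero polynomial of degree at least $r$ and less than $l$, so that the right-hand side vanishes while the left does not --- so the statement must be read either on $\mathbb{R}^n$ or on $\Omega$ with the customary additive term, which is in any case the version the paper actually uses in its applications.

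A second, smaller gap is in the inductive step. Applying the base case to $D^{\alpha}\phi$ with $|\alpha|=r$ produces $|D^{\alpha}\phi|_{q_1}$ on the right, not $|\phi|_{q_1}$, so passing from $(0,1)$ to general $(r,l)$ requires the intermediate-derivative interpolation of Landau--Kolmogorov--Nirenberg type, e.g.\ $|\nabla\phi|_{2s}^{2}\leq c\,|\phi|_{\infty}\,|D^{2}\phi|_{s}$ and its $L_p$ relatives, which is proved by integrating by parts against $|\nabla\phi|^{2s-2}\nabla\phi$ and is not a consequence of the Sobolev-plus-H\"older base case applied to derivatives. You should state and prove that lemma explicitly before invoking Young's inequality to absorb the intermediate-order terms; as written, the induction is circular at exactly the point where the inequality is nontrivial.
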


 Let $\Omega\subset\mathbb{R}^n$  be  a   open bounded   set and  $\mathrm{X}$ is  a  topological metric  space with  metric $d_X$. We consider  the  Fr\'{e}chet
 space $\mathrm{C}(\bar{\Omega}; \mathrm{X})$  of continuous functions  $f: \bar{\Omega} \rightarrow \mathrm{X} $  equipped with  the  metric
 $$d(f,g)=  \sup_{y\in \Omega} d_X(f(y),g(y)).$$ Further  we  denote $\mathrm{X^*_w}$  the  dual  of  $\mathrm{X}$
  equipped  with  the  weak topology.

\begin{theorem} [Abstract  Ascoli-Arzel$\grave{a}$  Theorem]  Let  $\Omega$ be  compact  and $\mathrm{X}$  is a   topological  metric  space equipped  with  a  metric  $d_X$.
Let  $\mathcal{F}=\{f_n\}$  be  a sequence  of  functions  in $(\bar{\Omega}; \mathrm{X})$.  We  assume  that
\begin{enumerate}
 \item $\mathcal{F}(y)=\{f_n(y)|f\in  \mathcal{F}\} $ has compact closure in  $\mathrm{X}$;
\item $\mathcal{F}$is  equi-continuous  under  $d_X$, i.e.
$$\sup_yd_{X}(f_n(y+h)-f(y)) \rightarrow 0,\; for \;   h\rightarrow 0,$$
uniformly  in $n$.
\end{enumerate}
 Then $\mathcal{F}$ is  precompact in
$C(\bar{\Omega}                                                                                                         ; \mathrm{X})$.

Moreover,  let   the  set  of  functions  $<f_n(y),\phi>$  ( $y\in \Omega$)  be  equi-continuous   for  any  fixed  $\phi$  belonging  to   a dense  subset  of  $\mathrm{X}$.

Then  $f_n\in C(\bar{\Omega}; \mathrm{X^*_w})$   and  there exists   an  $f \in
 C(\bar{\Omega}; \mathrm{X^*_w})$  such  that

 $$f_n\rightarrow  f,$$
in $C(\bar{\Omega}; \mathrm{X^*_w})$ as  $n\rightarrow  \infty$.
\end{theorem}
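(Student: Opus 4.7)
The plan is to prove the two assertions in sequence: the first is a classical Ascoli--Arzel\`a argument adapted to an abstract metric-space target, and the second upgrades this to the weak topology on $\mathrm{X}^{*}$ via a density and diagonal extraction combined with the scalar version of the first part.

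\textbf{First assertion.} I would use the compactness of $\bar\Omega$ to fix, for each integer $k\geq 1$, a finite $1/k$-net, and let $D$ be the resulting countable union, which is dense in $\bar\Omega$. By hypothesis (1), for each $y\in D$ the set $\{f_n(y)\}_{n}$ is relatively compact in $\mathrm{X}$, so a Cantor diagonal extraction produces a subsequence (not relabelled) such that $f_n(y)$ is Cauchy in $\mathrm{X}$ for every $y\in D$. Given $\epsilon>0$, hypothesis (2) supplies a $\delta>0$, independent of $n$, with $d_X(f_n(y),f_n(y'))<\epsilon$ whenever $|y-y'|<\delta$; covering $\bar\Omega$ by finitely many balls of radius $\delta$ centred at points of $D$ and invoking the Cauchy property at these finitely many points yields $\sup_{y}d_X(f_n(y),f_m(y))<3\epsilon$ for $n,m$ sufficiently large. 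Hence $\{f_n\}$ is Cauchy in the metric $d(f,g)=\sup_y d_X(f(y),g(y))$, and the limit is continuous as a uniform limit of equi-continuous maps.

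\textbf{Second assertion.} For the dual claim, the hypothesis says that the scalar maps $y\mapsto \langle f_n(y),\phi\rangle$, for $\phi$ in a dense subset $D'\subset\mathrm{X}$, are equi-continuous in $y$; together with the pointwise precompactness of $\{f_n(y)\}$ this forces a uniform bound $\|f_n(y)\|_{\mathrm{X}^{*}}\leq M$ on $\bar\Omega$. Applying Part~1 in the scalar case to each fixed $\phi\in D'$, and performing a diagonal extraction over a countable dense subset of $D'$, one obtains a subsequence for which $\langle f_n(y),\phi\rangle$ converges uniformly in $y$ for every such $\phi$. An $\epsilon/3$ argument using the uniform bound $M$ and the density of $D'$ in $\mathrm{X}$ extends uniform convergence to every $\phi\in\mathrm{X}$, and the pointwise weak-$*$ limits define an element $f(y)\in\mathrm{X}^{*}$. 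Equi-continuity of $y\mapsto\langle f(y),\phi\rangle$ on a dense set, together with the uniform bound on $f$, upgrades to continuity of $f$ into $\mathrm{X}^{*}_{w}$, and the uniformity $\sup_y|\langle f_n(y)-f(y),\phi\rangle|\to 0$ for every $\phi\in\mathrm{X}$ is by definition convergence in $C(\bar\Omega;\mathrm{X}^{*}_{w})$.

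\textbf{Main obstacle.} The delicate step is precisely the passage from pointwise weak-$*$ convergence at the test elements of $D'$ to uniform weak-$*$ convergence for every $\phi\in\mathrm{X}$. This depends critically on having a uniform bound $\sup_{n,y}\|f_n(y)\|_{\mathrm{X}^{*}}<\infty$, which must be extracted from hypothesis (1) (compactness of $\{f_n(y)\}$ for each $y$) together with the equi-continuity in $y$, essentially by propagating pointwise boundedness across $\bar\Omega$ via compactness and a Banach--Steinhaus-type argument. Once this uniform estimate is secured, the density and $\epsilon/3$ argument close the proof; without it, the diagonal subsequence only yields convergence of the scalar functions $\langle f_n,\phi\rangle$ for $\phi\in D'$, which is strictly weaker than convergence in $C(\bar\Omega;\mathrm{X}^{*}_{w})$.
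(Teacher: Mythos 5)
The paper gives no proof of this statement: it is quoted as a preliminary fact, with \cite{rif35} and \cite{rif38} cited only for its $L_p$-version, so your argument can only be judged on its own terms. Your first part is the standard Ascoli--Arzel\`a proof (finite nets giving a countable dense subset, diagonal extraction, $3\epsilon$ uniform Cauchy estimate) and is correct; the one point worth making explicit is that the uniformly Cauchy subsequence actually converges because each pointwise limit can be taken inside the compact set $\overline{\mathcal{F}(y)}$, so completeness of $\mathrm{X}$ is not needed.

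The gap is in the second part, at exactly the step you yourself flag as the main obstacle. You claim the uniform bound $\sup_{n,y}\|f_n(y)\|_{\mathrm{X}^*}\leq M$ can be extracted from hypothesis (1) together with the equi-continuity by ``a Banach--Steinhaus-type argument.'' It cannot, from the hypotheses as stated. Hypothesis (1) plus equicontinuity of $y\mapsto\langle f_n(y),\phi\rangle$ yields $\sup_{n,y}|\langle f_n(y),\phi\rangle|<\infty$ only for $\phi$ in the dense subset, and the uniform boundedness principle requires pointwise boundedness on a set of second category; a dense subset, even a dense linear subspace, can be meager, so Banach--Steinhaus does not apply. Concretely, take $\mathrm{X}=c_0$, $\mathrm{X}^*=\ell^1$, $\bar\Omega=[0,1]$, and $f_n(y)=n\,\eta(ny)\,e_n$ with $\eta$ a bump supported in $[1/2,2]$: every set $\mathcal{F}(y)$ is finite, hence compact, and $\langle f_n(\cdot),\phi\rangle$ is equicontinuous for every finitely supported $\phi$ (all but finitely many of these scalar functions vanish identically), yet $\|f_n(1/n)\|_{\ell^1}\to\infty$ and $\sup_y|\langle f_n(y),\phi\rangle|=\sqrt{n}\,\|\eta\|_\infty\to\infty$ for $\phi=(k^{-1/2})_k\in c_0$, so no subsequence converges in $C(\bar\Omega;\mathrm{X}^*_w)$. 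The uniform bound therefore has to be taken as an additional hypothesis (as it is in the standard formulations of this lemma, and as it is in fact available in every application the paper makes of it, e.g.\ $\sup_{m,t}|\rho^m(t)|_q\leq M$ from the a priori estimates); once it is assumed, your $\epsilon/3$ density argument does close the proof.
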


  The previous  theorem  has $L_p(\bar {\Omega})$-version  (see  \cite {rif35}, \cite {rif38}, for  example).

\begin{theorem} [Frech\'et-Kolmogorov Theorem] -  Let  $X \subset E \subset Y$
 be  Banach  spaces, the embeddings $X\subset  E \subset Y$    be  continuous.
Let  $\mathcal{F}=\{f_n\}$  be  a sequence  of  functions  in $L_p(0,T; X)$ with $1\leq p\leq \infty$.  We  assume  that
\begin{enumerate}
 \item $\mathcal{F}(t)=\{f_n(t)|f\in  {F}\} $ has compact closure in  ${E}$;
\item $\mathcal{F}$is  equi-continuous  in  $L_p(0,T; Y)$ i.e.
$$\|(f_n(t+h)-f(t)||_{L_p(0,T-h;Y)} \rightarrow 0,\; for \;   h\rightarrow 0,$$
uniformly  in $n$.
\end{enumerate}
 Then $\mathcal{F}$ is  precompact in
$L_p(0,T;Y)$.  If  $p=\infty$  then $\mathcal{F}$  belongs to  $C(0,T;E)$ and  is  relatively  compact  in  this  space.                                                                                                        \end{theorem}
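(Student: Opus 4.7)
The plan is to apply the classical vector-valued Fr\'echet-Kolmogorov compactness criterion in $L_p(0,T;Y)$. The two standing hypotheses are essentially the two requirements of that criterion: pointwise precompactness of $\mathcal{F}(t)$ in the target space, and equi-continuity under time translation.

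First, I would record that condition (1) combined with the continuous embedding $E\hookrightarrow Y$ implies that for each $t$, $\{f_n(t)\}$ has compact closure in $Y$ and is therefore bounded in $Y$ uniformly in $n$. Together with hypothesis (2) applied at a single small $h$, this yields a uniform bound for $\mathcal{F}$ in $L_p(0,T;Y)$.

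Second, I would carry out a diagonal extraction. Fix a countable dense subset $\{t_k\}_{k\in\mathbb{N}}$ of $(0,T)$. Applying condition (1) at each $t_k$ and diagonalising, I extract a subsequence $\{f_{n_j}\}$ such that $f_{n_j}(t_k)\to g(t_k)$ strongly in $E$, hence in $Y$, for every $k$.

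Third, and this is the main step, I would upgrade this pointwise convergence on a dense set to convergence in $L_p(0,T;Y)$. For $h>0$ introduce the Steklov average $T_h f(t)=\frac{1}{h}\int_t^{t+h}f(s)\,ds$. Hypothesis (2) controls
\[
\|f_{n_j}-T_h f_{n_j}\|_{L_p(0,T-h;Y)} \leq \sup_{0\le \tau\le h}\|f_{n_j}(\cdot+\tau)-f_{n_j}(\cdot)\|_{L_p(0,T-h;Y)},
\]
which tends to $0$ as $h\to 0$ uniformly in $j$. It therefore suffices, for fixed small $h$, to prove that $\{T_h f_{n_j}\}_j$ is Cauchy in $L_p(0,T-h;Y)$. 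Each $T_h f_{n_j}(t)$ is an average of values $f_{n_j}(s)$ with $s\in[t,t+h]$; using continuity of these values in $Y$ (from (2)) and the pointwise convergence on the dense net $\{t_k\}$, together with the uniform $Y$-bound from step one and dominated convergence, one concludes that $T_h f_{n_j}(t)\to T_h g(t)$ in $Y$ for a.e.\ $t$ and is dominated. This delivers the Cauchy property, and sending $h\to 0$ closes the argument.

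Finally, for $p=\infty$, equi-continuity in $L_\infty(0,T;Y)$ makes the family uniformly equicontinuous as maps $[0,T]\to Y$; combined with pointwise precompactness in $E$, the abstract Ascoli-Arzel\`a theorem stated above (with $\bar{\Omega}=[0,T]$ and target $E$) places $\mathcal{F}$ inside $C([0,T];E)$ and yields relative compactness there. The main obstacle is the third step: reconciling the mismatch between equi-continuity measured in the weaker space $Y$ and pointwise compactness measured in the stronger space $E$, so that the regularised sequence is genuinely Cauchy in the norm topology of $L_p(0,T;Y)$ rather than along a further sub-subsequence.
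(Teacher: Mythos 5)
The paper offers no proof of this statement at all: it appears in the Preliminaries as a quoted classical result, with the reader referred to the literature (Simon's compactness paper, reference [35]), so there is nothing in-paper to compare your argument against line by line. Your overall architecture --- regularize by Steklov averages $T_h f$, use hypothesis (2) to make $\|f_n-T_hf_n\|_{L_p(0,T-h;Y)}$ small uniformly in $n$ via Minkowski's integral inequality, and then compactify the regularized family for each fixed $h$ --- is exactly the standard (Simon) strategy, and that part of the plan is sound.

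The genuine gap is in your third step. You extract a subsequence converging at a countable dense set of times $\{t_k\}$ and then claim $T_hf_{n_j}(t)\to T_hg(t)$ by invoking ``continuity of these values in $Y$ (from (2))''. For $p<\infty$, hypothesis (2) is an \emph{integrated} translation-continuity condition and gives no pointwise continuity of $s\mapsto f_n(s)$; moreover a countable dense set has measure zero, so convergence there carries no information about the integrals $\frac{1}{h}\int_t^{t+h}f_{n_j}(s)\,ds$. The domination is also not in place: condition (1) gives $\sup_n\|f_n(s)\|_E=:M(s)<\infty$ pointwise but no integrability of $M$, so neither the uniform $L_p(0,T;Y)$ bound asserted in your first step nor the dominated-convergence passage is justified as written. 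The repair is to move the compactness argument from the pointwise values to the averages themselves: for fixed $h$ show that $\{T_hf_n\}_n$ is relatively compact in $C([0,T-h];Y)$ by Ascoli--Arzel\`a, using (2) for equicontinuity and proving (this is the real content, requiring (1) together with a uniform-integrability argument) that $\{\int_{t_1}^{t_2}f_n\,ds\}_n$ is relatively compact in $Y$; then let $h\to 0$ and conclude total boundedness of $\mathcal F$ in $L_p(0,T-h;Y)$ because it lies within $\epsilon$ of a relatively compact set. Finally, for $p=\infty$ your Ascoli--Arzel\`a application as stated only yields relative compactness in $C([0,T];Y)$, not in $C([0,T];E)$: the upgrade to $E$ needs the additional observation that the $E$- and $Y$-topologies coincide on compact subsets of $E$ (so that $Y$-equicontinuity of a family whose values lie in a compact of $E$ is in fact $E$-equicontinuity), which you have not supplied.
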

Making  use  of  the  derivative in time the  theorem  assumes the  following  form.
\begin{theorem}    Let  $X \subset E \subset Y$
 be  Banach  spaces, the embeddings  $X\subset  E$, and   $E \subset Y$   be  continuous.
Let  $\mathcal{F}=\{f_n\}$  be  a sequence  of  functions  in $L_p(0,T; X)$ with $1\leq p\leq \infty$  and  $\partial_t f_n\in L_r(0,T;Y)$  with  $1\leq r\leq \infty$. We  assume  that
\begin{enumerate}
 \item $\mathcal{F}(t)=\{f_n(t)|f\in  {F}\} $ is  compact  in  $E$ ;
\item the  set  $\{\partial_tf_n(t)\}$ is  bounded in $L_r(0,T;Y)$
uniformly  in $n$.
\end{enumerate}
 Then $\mathcal{F}$ is  precompact in
$L_p(0,T;E)$ if  $p< \infty$.    If  $p=\infty$ the  property continues  to  hold  for $r>1$.
\end{theorem}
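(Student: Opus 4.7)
The plan is to reduce Theorem 2.4 to Theorem 2.3 by converting the bound on $\partial_t f_n$ into the equi-continuity of $\{f_n\}$ demanded by Theorem 2.3. Condition (1) here matches condition (1) of Theorem 2.3 verbatim, so only condition (2) of Theorem 2.3 — namely the vanishing of the $L_p(0,T-h;Y)$-modulus of continuity uniformly in $n$ — needs to be established.

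The key step is a direct estimate via the fundamental theorem of calculus: for $h > 0$ and $t \in (0,T-h)$, write $f_n(t+h) - f_n(t) = \int_t^{t+h} \partial_s f_n(s)\,ds$ and apply Hölder's inequality in the time variable in $Y$, giving
\[
\|f_n(t+h) - f_n(t)\|_Y \;\leq\; h^{1-1/r}\Bigl(\int_t^{t+h} \|\partial_s f_n(s)\|_Y^{r}\,ds\Bigr)^{1/r}
\]
when $r > 1$. Raising to the $p$-th power, integrating in $t$, and using Fubini together with the uniform bound from condition (2) yields $\|f_n(\cdot + h) - f_n(\cdot)\|_{L_p(0,T-h;Y)} \leq C\, h^{\alpha}$ for an exponent $\alpha > 0$ depending on $p,r$ and a constant $C$ independent of $n$. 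For $r = 1$ the Hölder step collapses, but one still gets the required uniform vanishing of translations by invoking absolute continuity of the Lebesgue integral applied to a function dominating $\sup_n \|\partial_t f_n\|_Y$, valid whenever $p < \infty$.

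Having verified both hypotheses of Theorem 2.3, that theorem delivers precompactness of $\{f_n\}$; to upgrade this from $L_p(0,T;Y)$ to $L_p(0,T;E)$, extract from any subsequence a further subsequence converging in $L_p(0,T;Y)$, then a.e.\ in $Y$, and use hypothesis (1) — the fact that $\{f_n(t)\}$ lies in a compact subset of $E$ — together with continuity of the embedding $E \hookrightarrow Y$ to identify the $E$-limit with the $Y$-limit a.e.\ in $t$; an equi-integrability argument based on the modulus of continuity in Step 1 then promotes a.e.\ convergence to $L_p(0,T;E)$ convergence. For the case $p = \infty$ with $r > 1$, the stronger pointwise Hölder bound $\|f_n(t+h) - f_n(t)\|_Y \leq C h^{1-1/r}$ supplies equi-continuity of $\{f_n\}$ in the sense of $C(\overline{[0,T]};Y)$, which together with the pointwise compactness in $E$ allows one to invoke the Abstract Ascoli-Arzelà Theorem (Theorem 2.2) to conclude precompactness in $C(0,T;E)$.

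The main obstacle is the boundary behaviour of the exponents. The case $r = 1$ requires bypassing the Hölder estimate in favour of an absolute-continuity argument, and the case $p = \infty$ genuinely fails for $r = 1$, which is why the statement restricts to $r > 1$ in that regime. A secondary subtlety is the upgrade from $L_p(0,T;Y)$-convergence to $L_p(0,T;E)$-convergence in Step 3, which relies crucially on condition (1) holding for \emph{every} $t$ rather than almost every $t$, so that the extraction of a diagonal subsequence producing $E$-convergence a.e.\ is not obstructed by a measure-zero exceptional set inherited from the $Y$-convergence.
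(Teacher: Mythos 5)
The paper offers no proof of this statement at all --- it is quoted as a known variant of Simon's compactness theorem (cf.\ \cite{rif35}) --- so your attempt can only be measured against the standard argument, whose overall architecture you have correctly reproduced: write $f_n(t+h)-f_n(t)=\int_t^{t+h}\partial_s f_n(s)\,ds$, convert the $L_r$-bound on $\partial_t f_n$ into uniform decay of translations in $L_p(0,T-h;Y)$, and feed this into Theorem 2.3. Your $r>1$ computation is fine. The genuine gap is the case $r=1$: you propose to invoke absolute continuity of the integral of ``a function dominating $\sup_n\|\partial_t f_n(t)\|_Y$'', but no integrable dominating function need exist for a sequence that is merely \emph{bounded} in $L_1(0,T;Y)$ (let $\partial_t f_n$ concentrate like an approximate identity at varying points; the pointwise supremum fails to be integrable). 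The correct route avoids domination altogether: since $\int_t^{t+h}\|\partial_s f_n(s)\|_Y\,ds\le \|\partial_t f_n\|_{L_1(0,T;Y)}\le C$ for every $t$, one has
\[
\int_0^{T-h}\Bigl(\int_t^{t+h}\|\partial_s f_n(s)\|_Y\,ds\Bigr)^{p}dt\;\le\; C^{\,p-1}\int_0^{T-h}\!\!\int_t^{t+h}\|\partial_s f_n(s)\|_Y\,ds\,dt\;\le\; C^{p}\,h
\]
by Fubini, yielding the required uniform decay $O(h^{1/p})$ for every finite $p$ without any pointwise control.

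The second weak point is the upgrade from convergence in $L_p(0,T;Y)$ to convergence in $L_p(0,T;E)$, which is where the real content of the theorem lies. Pointwise compactness of $\{f_n(t)\}$ in $E$ plus a.e.\ convergence in $Y$ does identify an a.e.\ $E$-limit along a subsequence, as you say; but to pass from a.e.\ convergence in $E$ to convergence in $L_p(0,T;E)$ you need $p$-equi-integrability of $t\mapsto\|f_n(t)-f(t)\|_E$, and the modulus of continuity you constructed lives in the $Y$-norm, which controls nothing about the $E$-norm since $E\hookrightarrow Y$ goes the wrong way. The standard mechanisms for closing this are either an Ehrling-type interpolation $\|v\|_E\le\epsilon\|v\|_X+C_\epsilon\|v\|_Y$ (which requires the embedding $X\subset E$ to be \emph{compact} and $\mathcal{F}$ to be \emph{bounded} in $L_p(0,T;X)$ --- hypotheses not explicitly available here) or the observation that on a single compact subset of $E$ the $E$- and $Y$-topologies coincide, which requires the values $f_n(t)$ to lie in one fixed compact set rather than in a $t$-dependent family. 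As written, your ``equi-integrability argument based on the modulus of continuity in Step 1'' does not supply either ingredient, and the same issue recurs in your $p=\infty$ case, where the H\"older bound gives equicontinuity only in $C([0,T];Y)$ while the Ascoli--Arzel\`a argument you invoke needs it in $E$.
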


\subsection{About  Cauchy and Schwarz's Inequalities}

Let $\mathbf{ a}=(a_1,a_2,...,a_n)$,
$\mathbf{ b}=(b_1,b_2,...,b_n)$,
be two vectors with $n$ components.

Then
$$ det
\left (
\begin{array}{ccc}
\sum_{i=1}^n a_i^2 & \sum_{i=1}^n a_ib_i\\
\sum_{i=1}^na_ib_i& \sum_{i=1}^nb_i^2\\
\end{array}
\right)>0  \Leftrightarrow
$$
$$
\sum_{i=1}^na_i^2\sum_{i=1}^nb_i^2- (\sum_{i=1}^na_ib_i)^2=\frac{1}{2}\sum_{i,j=1}^n(a_ib_j-a_jb_i)^2>0.$$

We  have  written  the  classical Cauchy   inequality  in  matrix  form.
An alternative proof is as follows (see \cite{rif14}).   The quadratic form
$$ \sum_{i=1}^n(xa_i +yb_i)^2=x^2\sum_{i=1}^na_i^2 + 2xy\sum_{i=1}^na_ib_i + y^2\sum_{i=1}b_i^2$$
is positive for all $x,y$ and therefore has a negative  discriminant, unless $xa_i+yb_i=0$ for some $x$, $y$ not both  zero, and for all  $i$.
A generalization of the above inequality is obtained by considering the leading principal minors property of
 a  matrix $\mathbf{A}$ obtained  with $n$ vectors $\mathbf{a}_i$ (linearly  independent),  $i=1,2,\ldots n$,  with $n$ real components in  the  following  manner:
 $$ \sum_{i=1}^n(x_1a_1^i +x_2a_2^i+\cdot\cdot\cdot+ x_na_n^i)^2>0\Leftrightarrow$$
 $$
 det
\left (
\begin{array}{cccc}
 \mathbf{a}_1\cdot \mathbf{a}_1 &  \mathbf{a}_1\cdot \mathbf{a}_2 &\ldots\ldots &\mathbf{a}_1\cdot \mathbf{a}_n\\
\mathbf{a_2}\cdot \mathbf{a_1}& \mathbf{a}_2\cdot\mathbf{a}_2 & \ldots\ldots & \mathbf{a}_2\cdot \mathbf{a}_n\\
\ldots\ldots &\ldots\ldots&\ldots\ldots  &\ldots\ldots \\
\ldots\ldots &\ldots\ldots&\ldots\ldots  &\ldots\ldots \\
\ldots\ldots &\ldots\ldots&\ldots\ldots  &\ldots\ldots \\
\mathbf{a}_n\cdot \mathbf{a}_1& \mathbf{a}_n\cdot \mathbf{a}_2  &\ldots\ldots &\mathbf{a}_n\cdot \mathbf{a}_n
\end{array}
\right)
>0.
$$
It  is also possible to express the  determinant as the  sum of  squares of determinants.
A simple proof  to obtain  $ det\mathbf{A}\neq 0$
(we  need  this  in  what   follows)  is a  reductio ad  absurdum argument (see \cite{rif2}).
Assume  $ det\mathbf{A}=0$,  then  there  exists  $n$  numbers $\lambda_i$ , $i=1, 2,...,n$  (  not all equal  to   zero) such  that
$$ \sum_{i=1}^n\lambda_i\mathbf{a}_i\cdot \mathbf{a}_j =0.$$

Multiplying  the  above  equation   by  $\lambda_j$ and summing up  with  respect to $j$ we  obtain
$$ \sum_{i=1}^n\lambda_i\mathbf{a}_i \cdot  \sum_{i=1}^n\lambda_i\mathbf{a}_i=0.$$

This contradicts the  linear  independence of  $\{\mathbf{a}_i\}$.

Bearing  in mind the Leibnitz's "Calculus  Summatorium ",  we can  translate the  Cauchy inequalities  in integral  inequalities: Schwarz's inequality  and  Gram-Schwarz's inequality:

1) Schwarz's inequality: let $f, g$ be  two  functions defined and  continuos in a domain  $\Omega$.
Then  $$(\int_{\Omega}f(x)g(x)dx)^2< \int_{\Omega}f^2(x)dx\int_{\Omega}g^2(x)dx.$$

unless $ af(x)=bg(x)$  with  $a, b$  constants no  both zero.

2)  Gram-Schwarz's inequality:  let $f_i(x)$ be $n$  continuous  function in $\Omega$  then

$$
 det
\left (
\begin{array}{cccc}
 \int f_1(x)f_1(x)dx &  \int f_1(x)f_2(x)dx &\ldots\ldots &\int f_1(x)f_n(x)dx\\
\int f_2(x)f_1(x)dx&\int f_2(x)f_2(x)dx& \ldots\ldots & \int f_2(x)f_n(x)dx\\
\ldots\ldots &\ldots\ldots&\ldots\ldots  &\ldots\ldots \\
\ldots\ldots &\ldots\ldots&\ldots\ldots  &\ldots\ldots \\
\ldots\ldots &\ldots\ldots&\ldots\ldots  &\ldots\ldots \\
\int f_n(x)f_1(x)dx& \int f_n(x)f_2(x)dx  &\ldots\ldots &\int f_n(x)f_n(x)dx
\end{array}
\right)
>0
$$

\noindent
  unless  the  functions $f_i(x)$ i=1,2,...,n,  are linearly  dependent.

\section{Solvability of non-Newtonian Flow Problems}

In this section we concentrate on incompressible generalized Newtonian
fluids  related   to power law. The  section  is  devoted  to the  solvability  of  system (\ref{eq:3a})  in  bounded  domain  $\Omega$.
Most  of  the  results have their origin  in  the  earlier results  of  the author concerning  the  theory  of non-homogeneous viscous  fluids.

\subsection{Additional Notation}

 Set
 $$\mathbf{D}:=\mathbf{D}(\mathbf{u})=(d_{j,i})=\frac{1}{2} (\nabla \mathbf{u} + (\nabla \mathbf{u})^T),$$

 \noindent
 the symmetric part of the tensor $\nabla \mathbf{u}$ with
$d_{ji}=\frac{1}{2} (\frac{\partial u_j}{\partial x_i} +\frac{\partial u_i}{\partial x_j})$.

Let us remark, recalling the Frobenius norm, that

$$\|\mathbf{D}\|^2=\mathbf{D}:\mathbf{D}=|d_{j,i}|^2:= \sum_{j,i=1}^{n}d_{j,i}d_{j,i}.$$

\noindent
We  consider  as viscous
  stress tensor a modified   power-law, commonly    used in   practice  and    in mathematical  literature, i.e.
$$\mathbf{T}:= \mathbf{T}(\mathbf{u}) =2\mu(\|\mathbf{D}\|^2)\mathbf{D}(\mathbf{u}),$$ with $\mu$   a suitable  function. Then, the $i^{th}$
entry  of  $\nabla \cdot \mathbf{ T}$ can be written as follows
\begin{align}
&(\nabla\cdot (2\mu(\|\mathbf{D}\|^2) \mathbf{D}_i=\sum_{j=1}^n \partial_j(2\mu(\|\mathbf{ D}\|^2) d_{ji}=2\mu'(\|\mathbf {D}\|^2)\sum_{j=1}^nd_{ji}\partial_j\|\mathbf { D}\|^2+\label{eq:t} \\ \nonumber
&\mu(\|\mathbf{ D}\|^2)\sum_{j=1}^n \partial_j(\partial_ju_i+\partial _iu_j) =
 \mu(\|\mathbf{ D}\|^2)\Delta u_i +\\ \nonumber
&4\mu'(\|\mathbf { D}\|^2)\sum_{j,k,l=1}^nd_{ik}d_{jl}\partial_k\partial_lu_j=
\sum_{j,k,l=1}^nt_{ij}^{kl}(\mathbf{u})\partial_k\partial_lu_j.\\  \nonumber
\end{align}
Here $$t_{ij}^{k,l}=\mu(\|\mathbf{ D}\|^2)\delta_{kl}\delta_{ij} +4\mu'(\|\mathbf {D}\|^2)d_{ik}(u)d_{jl}(u),$$

\noindent
with
$$\partial_j\|\mathbf {D}\|^2= 2\sum_{k,l=1}^nd_{kl}\partial_jd_{kl}, \; \nabla\cdot  u  =0.$$

We define the quasi-linear differential operator
$$\mathbf{T}_p:= \mathbf{T}_p(\mathbf{u})=-\nabla\cdot
\mathbf{T}(\mathbf{u})=
\sum_{k,l=1}^nT^{k,l}(\mathbf {u})\partial_k\partial_l,$$

\noindent
with the matrix-valued  coefficients
$$ T^{k,l}(\mathbf{u})=(t_{ij}^{k,l}).$$
For the viscosity function $\mu(\cdot )$ several models may be
given. For  convenience, we tacitly   have  in mind the model
$$ 2\mu(\|\mathbf{D}\|^2)\mathbf{ D}:= \mu_0(1 +   \|\mathbf{D}\|^2)^{\frac{p-2}{2}}\mathbf{D},$$

\noindent
 ($\mu_0$  is  a  positive  constant) which is often quoted as standard model in the mathematical literature.

\noindent
If $p > 1$ the operator $\mathbf{T}_p$ is strongly elliptic ( see \cite{rif37}).

 Thus we consider the system
\begin{align}
&\rho\partial_t\mathbf{ u} +  \rho\mathbf{ u}\cdot \nabla  \mathbf{u}
 - \nabla \cdot(\mathbf{ T}(\mathbf{u})\mathbf{u} - \pi I)  =\rho \mathbf{ f},\label{eq:4a}\\ \notag
&\partial \rho +\nabla \cdot(\rho \mathbf{ u})=0,\\ \nonumber
& \nabla \cdot \mathbf{ u} =0,\;
 \mathbf{ u}(0)=\mathbf{ u}_0,\; \mathbf{u}_{\Gamma}=0, \;\rho(0)=\rho_0\geq 0,\;
 1<p. \\ \nonumber
\end{align}
The question is for which exponent, respectively for which $p$,
does a solution of the system      (\ref{eq:4a}) exist.

\subsection{ Structure conditions}

The  approach  to  find a  solution of  system  (\ref{eq:4a}) is via
 monotone operator theory   and compactness  arguments.  So we   set  classical  structure  condition
 assumptions.
We denote  $  \mathbf{M}_{sim}$ the set of symmetric matrices   of
order  $n$. In general, we assume that for a $p> 1$ and $q \in
[p-1,p)$ there exist $\alpha, \; \beta > 0$ such that for all
$\mathbf{\eta} \in \mathbf{M}_{sim}$ the  function $\mathbf{ A}   $  satisfies
\begin{enumerate}
\item (continuity)  $\mathbf{A}: Q_T\times\mathbb{R}^n\times \mathbf{M}_{sim}\rightarrow\mathbf{M}_{sim}$
is  a Charath\'eodory function,

\noindent
i.e. $(x,t)\mapsto
\mathbf{A}(x,t,\mathbf{u},\mathbf{D})$ is   measurable for every
$(\mathbf{u},\mathbf{D})$ and

\noindent $(\mathbf{u},\mathbf{D})\mapsto
\mathbf{A}(x,t,\mathbf{u},\mathbf{D})$ is  continuous for almost
$(x,t)\in Q_T$;

\item (Growth and  coercivity) There  exist $c_1>0,\; c_2 >0,\; g_1 \in L_{p'}(Q_T)$, \;

\noindent $g_2\in L_1(Q_T)$, $g_3\in
L_{(p/r)'}(Q_T)$, such  that
$$|\mathbf{ A}(x,t, \mathbf{u},\mathbf{D})|\leq g_1(x,t) +c_1(|\mathbf{u}|^{p-1}+ |\mathbf{D}|^{p-1}),$$
$$\mathbf{ A}(x,t, \mathbf{u},\mathbf{D}):\mathbf{D} \geq -g_2(x,t)-g_3(x,t)|\mathbf{u}|^r+c_2|\mathbf{D}|^p;$$
\item (Strict monotonicity)   For  $(x,t)\in Q_T, $   the  map $(\mathbf{u},\mathbf{D})\mapsto
\mathbf{ A}(x,t, \mathbf{u},\mathbf{D})$  is  $C^1$-function and
$$(\mathbf{A}(x,t, \mathbf{u},\mathbf{D}_1)- \mathbf{A}(x,t, \mathbf{u},\mathbf{D}_2):(\mathbf{D}_1-\mathbf{D}_2))
>0, \;\forall \mathbf{D}_1\neq\mathbf{D}_2.$$
\end{enumerate}
Now we state  the definition of solution which we are seeking.

First, $\mathbf{T}$  satisfies the   structure conditions. The  form
$$\mathbf{T}_p := -\nabla \cdot (2\mu(\|\mathbf{D}\|^2)\mathbf{D}(\mathbf{u}), $$

defines a continuous   operator $\mathbf{T}_p$
 acting from $ W_{p,0}^{1}(\Omega))$ into $( W_{p,0}^{1}(\Omega))'$ and
$$ < \mathbf{T}_p\mathbf{u},\mathbf{v}> =\int_{\Omega}\mu(\|\mathbf{D}\|^{p})\mathbf{D}(\mathbf{u})\mathbf{D}(\mathbf{v})dx,$$

\noindent
for $\mathbf{u}, \mathbf{v}\in W_{p,0}^{1}(\Omega)$.

 \noindent
Remark: The operator $\mathbf{T}_p$ defines a one-to-one correspondence  between $W_{p,0}^1(\Omega))$

\noindent
and $ (W_{p,0}^1(\Omega))'$ with inverse $(\mathbf{T}_p)^{-1}$, monotone, bounded and continuous.

By the Korn-inequality,
$$|||\mathbf{u}|||^p=\int_{\Omega} |\mathbf{D}(\mathbf{u})|^{p}dx,$$
is a new equivalent norm on $W^{1}_{p,0}(\Omega)$
and  the operator $\mathbf{T}_p$ is well defined in this space and conserves the continuity from $ W_{p,0}^{1}(\Omega)$
into $ (W_{p,0}^1(\Omega))'$.

\noindent
Further, an  accurate   choice of  basis is   relevant.

$\mathrm{V}^s$ is  the  adherence of $\mathcal{V}$ in $\mathrm{H}^s(\Omega).$

We  choose $s$  such  that if   $\mathbf{v}\in \mathrm{H}^s(\Omega)$ then $\partial_i \mathbf{v}\in  L^{\infty}(\Omega)$,
so  holds
$$ \mathrm{V}^s\subset \mathrm{V}_p\subset \mathrm{H} \subset (\mathrm{V}_p)'\subset \mathrm{V}^s)'.$$
In  the  case that $p=2$   we  omit the  subscript.

In  general,  we  denote $\{\mathbf{w}_m\}$  a  total  sequence, that is,  a  sequence  such  that  span $\{\mathbf{w}_m:m\in \mathbb{N}\}$
is  dense  in $\mathrm{V}_p$. This   is  guaranteed by  the   assumption that  $\mathrm{V}_p$ is  separable.
According to the  problem treated,  we, tacitly, choose or  an  arbitrary total  sequence $\{\mathbf{w}_m\}$ or
 the  set of eigenvectors   to the  problem
$$( \mathbf{w}_i,\mathbf{\phi})_{V^s}= \lambda_i(\mathbf{w}_i,\mathbf{\phi}),$$
where $(\cdot,\cdot)$  is  the  scalar product  in  $\mathrm{H}$.
\begin{definition} - Let $p>1$, $\mathbf{ u}_0\in H$, $\mathbf{ f}\in
L_{p}(0,T;(W^1_{p,0}(\Omega))')$ with $p'=\frac{p}{p-1}$ , $q$ a Sobolev conjugate and  $M$  is  a positive  constant.
Then a couple  of functions $(\mathbf{ u},\rho)$ is called a weak
solution to the problem (\ref{eq:4a})
 if:
\begin{align*}
& i)\; \sqrt{\rho} \mathbf{ u}\in L_{\infty}(0,T;H)\cap L_p(0,T;W^1_{p.0}(\Omega)),\partial_t(\rho\mathbf{u})\in (L_{p}(0,T;V_{p}(\Omega)))',\\
 &\rho\in
L_{\infty}(Q_T),\;\partial_t \rho\in (L_{p'}(0,T;W^{1}_{q'}(\Omega)))' \cap L_{5p/3}(0,T;W^{-1}_{5p/3}(\Omega)),
0  \leq \rho\leq M, \\
&ii)\;  \text{the  following
integral identities hold for  all smooth }\;
\mathbf{\psi} \\
& \text {and   divergence free} \;\mathbf{\phi}, \text{ with }\;\mathrm{\psi}(T)=\mathbf{\phi}(T)=0,\\
& \int_0^T(\rho, \partial_t \psi
+ \mathbf{ u}\cdot \nabla \psi)dt =-
(\rho_0 \mathbf{u}_0, \psi(0)),\\
 &\int_0^T((\rho \mathbf{ u},\partial_t \mathbf{\phi}) + (\rho\mathbf{u},\mathbf{u}\cdot \nabla \mathbf{\phi}) -
(\mathbf{T(\mathbf{u})u},\mathbf{D(\mathbf{\phi})}) +(\rho\mathbf{ f},\mathbf{\mathbf{\phi}}))dt = \\
&-(\rho_0\mathbf{ u}_0,\mathbf{\phi}(0)).
\end{align*}
\end{definition}

The problem (\ref{eq:4a})  with  $\rho $  constant is studied, relatively. The existence of weak solutions for $ p \geq \frac{3n+2}{n+2}$
 had first appeared in \cite{rif17}, \cite{rif18}. Later, Ne$\check{c}$as and his  collaborators investigated the  existence  and
 regularity of  the  problem (\ref{eq:4a}).  The  best  results  are  obtained  in the case  of  periodic spatial functions.

 We do not want to comment on all of the literature here, but concerning  the  existence,
the  regularity  problem and the  attempts to  find  optimal exponent $p$,
we refer to the survey article  \cite{rif23}, and to the paper \cite{rif3}.

The case  $\rho \geq   c  >0$ is   considered  in \cite{rif12} .

  We prove next the existence of weak solution of  the  problem (\ref{eq:4a}).

\begin{theorem} - Let $\mathbf{ u}_0 \in H$, $\rho_0\in L_{\infty}(\Omega)$, $0\leq \rho_0 \leq M$
and

\noindent
$\mathbf{ f}\in L_p(0,T;(W^1_{p,0}(\Omega))')$, $p\geq 1+\frac {2n}{n+1}$.
Then there  exists  a weak solution to the
 problem (\ref{eq:4a}).
 \end{theorem}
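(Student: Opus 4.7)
The plan is to construct a Galerkin approximation in the basis $\{w_m\}\subset V^s$ (with $s$ chosen as in the preliminaries so that $\partial_i w_m\in L^\infty$), and at each level $n$ solve the coupled system consisting of a finite-dimensional ODE for the velocity coefficients together with the linear transport equation for the density driven by the approximate velocity $u_n$. Because $u_n(t)$ is a finite linear combination in $V^s$ and hence Lipschitz in space, the density can be obtained either by the method of characteristics or by adding a vanishing artificial viscosity $\varepsilon\Delta\rho$ and solving the resulting parabolic problem. In either case the maximum principle gives $0\le\rho_n\le M$ a.e.\ in $Q_T$, and the transport equation provides bounds on $\partial_t\rho_n$ in negative-index Sobolev spaces in terms of $\rho_n$ and $u_n$.

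Multiplying the Galerkin momentum equation by the coefficients of $u_n$ and using the transport equation for $\rho_n$ (the identity that renders the convective term antisymmetric in the energy) yields the usual identity
\begin{equation*}
\tfrac12\tfrac{d}{dt}\!\int_\Omega \rho_n|u_n|^2\,dx+\int_\Omega 2\mu(\|D(u_n)\|^2)|D(u_n)|^2\,dx=\int_\Omega\rho_n f\cdot u_n\,dx.
\end{equation*}
Combined with the coercivity assumption on $T$ and the Korn inequality stated above, this gives $\sqrt{\rho_n}\,u_n$ bounded in $L^\infty(0,T;H)$ and $u_n$ bounded in $L^p(0,T;V_p)$; the growth condition then bounds $T(u_n)$ in $L^{p'}(Q_T)$. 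Local solvability of the ODE is standard and the energy bound extends the approximate solutions to $[0,T]$.

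Next I would extract compactness. From the momentum equation and the duality chain $V^s\subset V_p\subset H\subset (V_p)'\subset (V^s)'$, I would bound $\partial_t(\rho_n u_n)$ in some $L^r(0,T;(V^s)')$; the Fr\'echet-Kolmogorov theorem and its derivative variant then yield strong convergence of $\rho_n u_n$ in $L^r(0,T;H)$. For $\rho_n$, the transport bound on $\partial_t\rho_n$ combined with the Abstract Ascoli-Arzel\`a theorem in the weak-star topology gives $\rho_n\to\rho$ in $C(\bar{[0,T]};L^q_w)$. The precise condition $p\ge 1+\tfrac{2n}{n+1}$ is what is needed so that, via the Sobolev embedding and interpolation, $u_n$ converges strongly in $L^2(Q_T)$ and $\rho_n u_n\otimes u_n\to\rho u\otimes u$ in $L^1(Q_T)$, enough to pass to the limit in the convective term.

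The main obstacle is identifying the weak limit of the nonlinear viscous term $T(u_n)$, where strong convergence of gradients is not available. Here I would invoke Minty's monotonicity trick: using the strict monotonicity in the structure hypothesis (3), together with the energy inequality inherited in the limit and the strong convergence already obtained for $\rho_n u_n$, the standard argument $\int\langle T(u_n)-T(v),D(u_n)-D(v)\rangle\ge 0$ and passage to the limit with $v=u+\lambda w$, $\lambda\to 0$ identifies the weak limit as $T(u)$. Passing to the limit in the integral identities of the definition then recovers both the transport equation for $\rho$ and the weak momentum equation, completing the existence proof.
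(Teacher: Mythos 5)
Your proposal follows essentially the same route as the paper: a semi-Faedo--Galerkin scheme (finite-dimensional velocity, density by characteristics), the energy estimate via the transport identity, compactness for $\rho_n$ and $\rho_n u_n$ to handle the convective term, and Minty's monotonicity trick to identify the weak limit of the stress. The only detail worth flagging is that solving the finite-dimensional ODE requires the mass matrix $\bigl(\int_\Omega \rho_n w_i w_j\,dx\bigr)$ to be invertible, so the approximate initial density must be lifted away from zero (the paper takes $\rho_0^m\geq 1/m$); your bound $0\leq\rho_n\leq M$ alone does not guarantee this.
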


 \begin{proof}
 The plan of the proof  is as follows:
 \begin{enumerate}
 \item construction of a semi- Faedo - Galerkin approximation;
 \item  a priori estimates;
 \item compactness results;
 \item passing to the limit in the semi- Faedo- Galerkin approximation.
\end{enumerate}

 \subsection{Semi- Faedo- Galerkin approximation}

\medskip
Throughout the paper $P$ denotes the orthogonal projection from $L_2(\Omega)$ onto $\mathrm{H}$. We shall denote by   $\{\mathbf{w}_k(x)\}$ $k\in \mathbb{ N}$ a total  sequence in  $V_p$, sufficiently  regular.

  Let  $W_{(m)}:=span\{\mathbf{w}_k(x):  1\leq  k\leq m\}$.  We  denote $P_m$  the  projection  from $H$  onto  $W_{(m)}$.

\medskip
\noindent
The basic idea of the existence proof is to approximate  $\mathbf{u}(x,t)$ by  functions  $\mathbf{u^m}(x,t)$ in
 finite-dimensional subspace $W_{(m)}$ of $\mathrm{H}$ of dimension $m$ and approximate $\rho(x,t)$ by functions $\rho^m(x,t)$
  in an infinite-dimensional  space by solving the transport equation replacing  $\mathbf{u}$ for $\mathbf{u}^m$. This approximating procedure
  is called semi-Faedo-Galerkin  approximation. This gives a system of ODE's for $\mathbf{u}^m$.

  For our analysis, we take
  $\mathrm{W}_{(m)} \subset \mathrm{H}$ the linear space  spanned by the vectors $(\mathbf{ w}_1, \mathbf{ w}_2,...,\mathbf{ w}_m)$
 where $\mathbf {w}_1,  \mathbf{ w}_2,...,\mathbf{w}_m $ are the the first $m$-solutions of the spectral problem

$$(\mathbf {w}_j,\mathbf {v})_{\mathrm{V^s}}=\lambda_i(\mathbf {w}_j,\mathbf {v}),$$

\noindent
$\forall \mathbf {v}\in \mathrm{V}^s.$

We look for    ($\mathbf{ u}^m$, $\rho^m$ ) solution of  the  following  system
\begin{align}
&\mathbf{ u}^m = \sum_{i=1}^mc^m_i(t)\mathbf{ w}_i(x)\in C(0,T;\mathrm{W}_{(m)}) ,  \label{eq:5a}\\ \nonumber
&\rho^m \partial_t \mathbf{ u}^m + \rho^m \mathbf{ u}^m\cdot\nabla \mathbf{ u}^m -
 \nabla \cdot\mathbf{ T}\mathbf{u}^m + \nabla \pi^m =
\rho^m \mathbf{f},\\ \nonumber
&\partial_t \rho^m +\mathbf{ u^m}\cdot \nabla \rho^m=0, \\ \notag
&  \nabla\cdot \mathbf{u^m}=0, \;
\mathbf{ u}^m(0)= \mathbf{ u}_0^m, \; \mathbf{ u}^m_{\Gamma}=0,
\;\rho ^m(0)= \rho^m_0,\\ \nonumber
\end{align}

\noindent
where $\mathbf{u}^m(0)\rightarrow \mathbf{u}_0$ in $\mathrm{H}$ and $\rho^m _0 $
is  a  smooth function such that $\rho^m_0 \rightarrow \rho_0 $ in $L_q(\Omega)$ for some $q \geq 1$ and  $M \geq \rho_0^m \geq \frac{1}{m}  >0$ .

 \subsection{Continuity equation}

 \medskip
First,   we  treat the existence  of  the  continuity  equation $(\ref{eq:5a})_3$.

Assuming $\mathbf{ u}^m \in C^1(0,T; H^2(\Omega))$ is known, the solution of the continuity equation

\begin{align}
&\partial_t \rho^m +{\mathbf{ u}^m}\cdot \nabla \rho^m=0,\label{eq:6a}\\ \notag
&\rho^m(0)=\rho^m_0,\\ \nonumber
\end{align}

\noindent
 with $0<\frac{1}{m}\leq \rho_0^m \leq M$
is obtained by the method of the characteristics.

According, the characteristic $y^m$ to
(\ref{eq:6a}) are determined as a solution of the system
\begin{align}
&\frac{dy^m(\tau;t,x)}{d\tau}= \mathbf{u}^m(\tau;y^m),\label{eq:7a}\\ \notag
&y^m(0;t,x)=x. \\ \notag
\end{align}

The Cauchy-Lipschitz theory  provides the existence of a solution of (\ref{eq:7a}). Consequently,
the  explicit  form  for  $\rho^m$  is
$$\rho^m(x,t)= \rho^m_0(y^m(0,t,x)).$$

It  follows  the  maximum principle:
$$\frac{1}{m}\leq \rho^m(x,t)\leq M,  \;\forall (x,t)\in  Q_T.$$

 Integrating  over  $\Omega$  $(\ref{eq:6a})_1$,  we  get
 $$d_t\int_{\Omega}\rho^mdx =0,$$
 and  after integration   over   interval $(0,t)$,  the  conservation   of  mass results:

 $$\int_{\Omega}\rho^m(t)dx  =\int_{\Omega}\rho^m(0).$$

It  may  be noticed that, at  this  level, the estimate of  the density does  not  depend  on the  regularity  of
$\mathbf{ u}^m$.

\noindent
Furthermore,  if  $\mathbf{u}^m \in  \mathrm{W}_{(m)} $  then
$\partial_t\rho^m\in L_p(0,T;(\mathrm{ W}_q^{1}(\Omega))')\cap L_r(0,T;H^{-1}(\Omega))$  with $r\geq1$ and  $q= \frac{3p}{3-p}$ if $p<3$ or   $q>1$  if  $p\geq 3$,
 uniformly with respect to  $m$.

Now, applying  the gradient  operator  $\nabla$  to $(\ref{eq:6a})_1$,  we easily  get
\begin{align}
&\partial_t\nabla \rho^m +\nabla\mathbf{ u}^m\cdot \nabla \rho^m+\mathbf{ u}^m\cdot \nabla \nabla\rho^m=0.\label{eq:8a}\\  \notag
\end{align}
Multiplying (\ref{eq:8a})  by $|\nabla \rho^m|^{p-2}\nabla \rho^m$ ($p\geq 2$) and,  after  integration over $\Omega$, we get easily
$$\frac{d}{dt}|\nabla \rho^m|^p_p\leq  c|\nabla \rho^m|^p_p|\nabla \mathbf{u}^m|_{\infty},$$
\noindent
and  it  follows
$$|\nabla \rho^m(t)|_p \leq |\nabla \rho^m_0|_pexp\int_0^tc|\nabla \mathbf{u}^m|_{\infty}d\tau.$$
 It is  worth  to  note  that, at  first  level  of  regularity  of  the  density, we  need
$\nabla\mathbf{u}^m\in L_1(0,T;L_{\infty}(\Omega))$.

(In   the  case of compressible fluid  the  above regularity  for  the divergence  of the velocity is
 required  also  at  zero  level.
This  is the main  obstacle  to  prove the existence  of a  weak  solution  in that  context.
According,
to  obtain  some  results  on  the   existence  of  weak  solution for  compressible fluids
assumption  on  the  summability  of the  density does seem to be required.)

\noindent
We  continue  to prove  estimates   of  the solution  of  the  continuity  equation.
Multiplying (\ref{eq:8a})  by $(\rho^m)^{p-1}$ , ($p\geq 2$) and,  after  integration
over $\Omega$, we get

$$\frac{d}{dt} (\rho^m)^p+ \mathbf{u}^m\cdot \nabla (\rho^m)^p=0,$$
then
$$|\rho^m (t)|_p=|\rho^m_0|_p, $$
for   every $ t\in[0,T].$

This  simple  relation implies the  uniqueness of  the solution and
if $\rho_0^m$  converges  to $\rho_0$ in $ L_p(\Omega)$    then
$$|\rho^m (t)|_p\rightarrow|\rho_0|_p.$$
Later,  we  will  discuss  the consequences  of  the  above  estimate.

\noindent
Now, we pass to consider the existence problem for the system  (momentum equation)
\begin{align}
&\rho^m \partial_t \mathbf{u}^m + \rho^m \mathbf{u}^m\cdot\nabla \mathbf{u}^m -
\nabla\cdot\mathbf{T} \mathbf{u}^m +\nabla \pi^m=
\rho^m \mathbf{ f},\label{eq:9a}\\ \nonumber
&\mathbf{u}^m(0)= \mathbf{u}^m_0,\\ \nonumber
\end{align}
with  $\mathbf{u}^m=\sum_{i=1}^m c^m_i(t)\mathbf{w}^m_i$,  divergence  free.

It is  sufficient to prove  the  existence of  the coefficient $(c_i^m(t))_{i=1}^m$ such that (\ref{eq:9a}) holds.
For  this,
we project $(\ref{eq:9a})_1$  onto $\mathrm{W}_{(m)}$
and obtain
\begin{align}
&(\rho^m \partial_t \mathbf{ u}^m,\mathbf { w}_i) + (\rho^m \mathbf{ u}^m\cdot\nabla \mathbf{ u}^m,\mathbf{ w}_i) +
 (\mathbf{ T}(\mathbf{ D})\mathbf{u}^m,\mathbf{ D}(\mathbf{ w}_i)) = \label{eq:10a}\\ \nonumber
&(\rho^m \mathbf{ f}, {\mathbf w}_i), \quad i=1,...,m,\\ \nonumber
&\mathbf{ u}^m(0)= \mathbf{ u}_{0}^m.\\ \nonumber
\end{align}

\noindent
We introduce the matrix  $\mathbf{A}= (a_{i,j})_{i,j=1}^m$ with components
$$
a_{i,j}=\int_{\Omega}\rho^m\mathbf{ w}_i\mathbf{w}_jdx.
$$

The system (\ref{eq:10a}) can be written as
$$\mathbf{A}\frac{ d}{dt} \mathbf{ c}^m = \mathbf{H}(\mathbf{c}^m), \; \mathbf{ c}^m(0)= \mathrm{ c}^m_0, $$
\noindent
where $\mathbf{ c}^m= (c_i^m(t),c_2^m(t),...,c_m^m(t))$ and $\mathbf{H}$ is easily understood.
The existence of a solution of (\ref{eq:10a}) follows by standard ODE theory once we prove that $\mathbf { A}$ is invertible.
This fact is a consequence of the results  in   subsection 2.1   and of linear independence of the vectors
$\{\sqrt{\rho^m}\mathbf{w}_i\}$ ,  $i=1,2,\ldots, m$.

\noindent

(In \cite{rif2} the  invertibility of  $\mathbf{A}$ is   obtained  by "reductio ad  absurdum" ; in other  papers it is  assumed  true).

\noindent
Consequently, $\mathbf{A}$ is invertible and the system (\ref{eq:10a}) can be written
$$\frac {d}{dt} \mathbf{ c}^m = \mathbf{A}^{-1}\mathbf{H}(\mathbf{c}^m). $$
Standard ODE theory implies the  solvability of (\ref{eq:10a}) in   $(0,t_m)\subseteq  (0,T).$

 By the Carath\'eodory's extension theorem,  the global solvability of (\ref{eq:10a}) derives from the  following global a priori estimates for $\mathbf{u}^m$.
\subsection{A Priory Estimates}

Passing to consider the momentum equation, we
multiply $(\ref{eq:10a})_1$ by $c_i^m(t)$
 and summing the  result  over $i$ we get
\begin{align}
&\frac{1}{2}\int_{\Omega}(\rho^m\partial_t |\mathbf{u}^m|^2+ \rho_m\mathbf{u}^m \cdot \nabla |\mathbf{u}^m|^2 )dx +
(\mathbf{T}\mathbf{u}^m,\mathbf{D}(\mathbf{u}^m))= \label{eq:11a}\\ \nonumber
&\int_{\Omega}\rho^m\mathbf{ f}\mathbf{ u^m}dx.\\ \nonumber
\end{align}

Multiplying the continuity equation by $ |\mathbf{u}^m|^2/2$,
integrating over $\Omega$ and adding the result to (\ref{eq:11a}),  we get
$$
\frac{1}{2}\int_{\Omega}\partial_t (\rho^m | u^m|^2)dx + (\mathbf{ T}\mathbf{u}^m,\mathbf{D}(\mathbf{u}^m))= \int_{\Omega}\rho^m\mathbf{ f}\mathbf{ u}^mdx.
$$
According to the assumptions on $\mathbf {T}$ and the Korn inequality, after
integration over $(0,t)$ ($t<t_m$), we obtain

\begin{align}
&|\sqrt {\rho^m(t)} \mathbf{ u}^m(t)|_2^2 +\int_0^{t}\|\mathbf{ u^m}\|^p_pd\tau \leq
c\int_0^t\| \mathbf{f}\|^{p'}_{(\mathrm{V}_p)'}d\tau + |\sqrt{\rho^m_0}\mathbf{u}^m_0|_2^2.
\label{eq:12a}\\ \nonumber
\end{align}

Consequently,
$$\sqrt {\rho^m}\mathbf{ u}^m \in L_{\infty}(0,t_m;L_2(\Omega)),\;\mathbf{u}^m\in L_p(0,t_m;V_p),$$
uniformly with  respect to $m$ .

We  notice  that  $\mathbf{u}^m \in L_{\infty}(0,t_m;L _2(\Omega))$ holds   for  finite  $m$, only.

Thanks to the above uniform estimates,
$$|\mathbf{c}^m(t)|\leq C, $$
follows,
uniformly with respect to $m$,
jointly to  the  continuity  of  $\mathbf{c}^m(t)$ on  $[0,t_m)$.  By  Carath\'eodory's  theory  we can set $t_m=T$.

\noindent
We  estimate now the time-derivative of  the  unknowns.
 First, we recall that
$ 0<\frac{1}{m}\leq \rho^m\leq M$.
Moreover
$$\partial_t \rho^m \in L_q(0,T;H^{-1}(\Omega))\cap L_{p}(0,T;W^{-1}_r(\Omega),)$$
for arbitrary
$q\geq 1$ and $r$ is a  Sobolev conjugate. Since $\rho^m$ is bounded uniformly with respect to $m$, thanks to
 the  $L_q$-(Ascoli-Arzel$\grave{a}$)  theorem,
$\{\rho^m\}$ is a  compact set in $L_{\infty}(0,T; W^{-1}_r(\Omega))$ or in $L_{\infty}(0,T; H^{-1}(\Omega))$,
and, in addition,  is a compact set in
$C(0,T; (L_q(\Omega))_w)$ with  $q\geq r$  ,   for  example.

\noindent
Now, taking  into  account  the  continuity  equation, $(\ref{eq:9a})_1$ can  assume  the  form
\begin{align}
&\partial_t(\rho^m \mathbf{ u}^m) + \nabla\cdot(\rho^m \mathbf{ u}^m\otimes                                                                         \mathbf{ u}^m) -
\nabla\cdot\mathbf {T}\mathbf{u}^m +\nabla \pi^m=
\rho^m \mathbf{f}.\label{eq:13a}\\ \nonumber
\end{align}

Set
$$b(\rho^m\mathbf{u^m}, \mathbf{u^m},\mathbf{w}):=\int_{\Omega}\nabla\cdot(\rho^m \mathbf{ u}^m\otimes\mathbf{ u}^m )\mathbf{w}dx.$$

We  note that for   $\mathbf{w}\in  \mathrm{V^s}$
$$|b(\rho^m\mathbf{u^m}, \mathbf{u^m},\mathbf{w})|=|\int_{\Omega}\nabla\cdot(\rho^m \mathbf{ u}^m\otimes\mathbf{ u}^m )\mathbf{w}dx|=$$
$$|\int_{\Omega}(\rho^m \mathbf{ u}^m\cdot\nabla \mathbf{ w} )\mathbf{u}^mdx|\leq |\sqrt{\rho^m}\mathbf{ u}^m|_2^2\|\mathbf{ w}\|_{V^s},$$

\noindent
then
$$b(\rho^m\mathbf{u^m}, \mathbf{u^m},\mathbf{w}):=<\mathbf{g}^m(t),\mathbf{ w}>,                                               $$

\medskip
\noindent
where  $\mathbf{g}^m(t)$  belongs  to  a  bounded  set  in  $L_{\infty}(0,T;(\mathrm{V^s} )')$.

Considering the projection  operator $\mathrm{P}_m:\mathrm{H}\rightarrow \mathrm{W}_{(m)} $, since $\mathbf {T}_p\mathbf{u}^m,\;\mathbf{ g}^m  ,  \mathbf{f}$ belong to  a  bounded set  in $L_{p'}(0,T,(V^s)')$, uniformly with  respect  to  $m$, (\ref{eq:13a}) gives
$$\mathrm{P}_m\partial_t(\rho^m\mathbf{u}^m)\;
 \text{belongs  to  a  bounded  set  in}\; L_{p'}(0,T,(V^s)'), $$
 $  \text{ uniformly  with  respect  to}  \; m.$
\subsection{Time estimate - compactness results}
The a priori estimates obtained  in  the  last  subsection imply
$$\sqrt {\rho^m}\mathbf{ u}^m \in L_{\infty}(0,T;L_2(\Omega))\cap L_p(0,T;L_q(\Omega)),$$
$q$  is  a  Sobolev  conjugate.

We  deduce  that
$$ \rho^m\mathbf{ u}^m \rightharpoonup \mathbf{v} \;\text{ weakly}^*\;  \text{in}\; L_{\infty}(0,T;L_2(\Omega)).$$

 We  prove  next that
$$\mathbf{v}=\rho\mathbf{u}.$$
According  to  the  estimates  on the  density, it  holds, for example,
$$\|\partial_t\rho^m\|_{L_2(0,T; H^{-1}(\Omega))}\leq C.$$  Since $\{\rho_m(t)\}$  is  a  compact  set in  $H^{-1}(\Omega)$
for $ t  \in (0,T)$
hence
$$\rho^m \rightarrow \rho \;\text{strongly} \; \text{in}\; L_2(0,T; H^{-1}(\Omega)).$$

Now,
$\{\mathbf{u}^m\} $ is  a  bounded set in $L_p(0,T;W^1_{p,0}(\Omega))$ with  $p\geq 2$, then  follows

$$\mathbf{u}^m \rightharpoonup \mathbf {u} \; \text{weakly}\; \text{in}\; L_2(0,T; H_0^{1}(\Omega)).$$
Consequently,
$$\rho^mu_i^m \rightarrow \rho u_i,$$  in the  sense of distributions.

\noindent
So, we  have  proved  that
$\mathbf{v}=\rho\mathbf{u}$.

Finally,
$$\rho^m\mathbf{u}^m \rightharpoonup \rho \mathbf{u} \; \text  {weakly in }  \;L^2(Q_T). $$

Now, we  prove  that
$$\rho^mu_i^mu^m_j\rightarrow \rho u_iu_j.$$
The  estimate  of  the time  derivative  of  $\rho^m$ implies  the time continuity  of  $\rho^m(t)$   in $(L_2(\Omega))_w$.

At this  stage  we  consider the integral  identity that  is   satisfied by $\rho$  and  $\mathbf{u}$
with  smooth  $\psi$  with  $\psi(T)=0$, that  is

\begin{align}
&\int_0^T(\rho, \psi_t + \mathbf{u} \cdot \nabla \psi)dt = (\rho_0,\psi(0)).\label{eq:14a}\\ \nonumber
\end{align}

Kazhikhov- Smagulov  \cite{rif15} extend $\mathbf{u}$   and $\rho$ by  zeros   onto the  exterior  of  the  domain
 $Q_T$, so  (\ref{eq:14a})  holds  in   the region  $(-\infty,+\infty)\times \mathbb{R}^3$.  Using  test  function in  form of an  average (Steklov-functions) prove  that,
for  almost $t_1,t_2$  in  [0,T], is  valid
$$|\rho(t_1)|_q=|\rho(t_2)|_q,$$
for  $2\leq q<\infty$.

It  being  understood that  $(\rho(t_2)-\rho(t_1),\psi)\rightarrow 0$  when  $t_2\rightarrow t_1$,
  we obtain  that $\rho(t_2)\rightarrow \rho(t_1)$    strongly in  $L_2(\Omega)$  and,  in  virtue  of  the  boundedness  of  $\rho$,
the   convergence  is  valid   in   $L_q(\Omega)$  for  every finite $q\geq 1$.
It  is  immediate  that
$|\rho(t)|_q=|\rho_0|_q$.

\noindent
It  follows, in  particular,
$$\rho^m\rightarrow \rho  \;\text { strongly in } \;L_q(Q_T) .$$

\medskip
\noindent
Consequences  of the  previous estimates - summary.

\begin{enumerate}
\item $\rho^m
\rightharpoonup \rho$ weak* in $L_{\infty}(Q_T)$;
\item$\rho^m \rightarrow \rho$ strongly in $L_q(0,T;W^{-1}_r(\Omega))$ for arbitrary  finite $q >1$ and $r\in (2,p)$;
\item $\rho^m \rightarrow \rho$ strongly in $L_q(Q_T)$ for arbitrary finite  $q >1$ ;
\item $\rho^m \rightarrow \rho$  in $C([0,T];(L_{q}(\Omega))_w)$ for arbitrary finite $q >1$;
\item  $\rho^m \rightarrow \rho$ in $C([0,T];L_q(\Omega))$ for arbitrary finite  $q >1$;
\item $\mathbf{ u}^m   \rightharpoonup \mathbf{ u}$ weakly in $L_p(0,T;W^1_p(\Omega))$;
\item $ \rho^m u_i^mu_j^m \rightharpoonup \alpha_{i,j}$ weakly in
$L_{r}(Q_T )$ for some $r>1$ if $p> 6/5$.
\end{enumerate}
Now,  we prove that $\alpha_{ij}= \rho u_i u_j$ using  the method  introduced  in \cite{rif27}.

 To  begin  with, we  recall that
 $$\partial_t(P_m\rho^m \mathbf{ u}^m ) \;\text  {is uniformly bounded in} \;
 L_{p}(0,T;(W^s_{p.0}(\Omega))')  \;\text {with }\; s>1.$$

\noindent
In  addition, $\{P_m\rho^m\mathbf {u}^m(t)\} $ is a compact set
in $(W^1_{p,0}(\Omega))'$ for every  $t\in (0,T)$.

  It  follows
\begin{align*}
 &\int_0^T\int_{\Omega} \rho^{m}|\mathbf {u}^m|^2dxdt =
\int_0^T(\rho^{m}(t)\mathbf{u}^m(t),\mathbf{u}^m(t))dt =\\
&\int_0^T(P_m\rho^{m}(t)\mathbf{u}^{m}(t),\mathbf{u}^m(t))dt =\\
&\int_0^T<P_m\rho^m(t)\mathbf{u}^{m}(t),\mathbf{u}^m(t)>_{(W^1_{p ,0}(\Omega))'\times W^1_{p,0}(\Omega)}dt.
\end{align*}

Then
\begin{align*}
&\lim_{m\rightarrow \infty}\int_0^T\int_0^T<P_m\rho^{m}(t)\mathbf{u}^{m}(t),\mathbf{u}^m(t)>_{(W^1_{p,0}(\Omega))'\times W^1_{p,0}(\Omega)}dt=\\
&\int_0^T<\rho(t) \mathbf{u}(t),\mathbf{u}(t)>_{(W^1_{p,0})'\times W^1_{p,0}(\Omega)}dt =
\int_0^T(\rho(t)\mathbf{u}(t),\mathbf{u}(t))dt =\\
&\int_0^T\int_{\Omega}\rho|\mathbf{ u}|^2dxdt.
\end{align*}
Moreover, since
$$\rho^m \rightarrow\rho \; \text{strongly}\; in\; L^q(Q_T), \; \forall q >1,$$
we   have
$$\sqrt {\rho^m}\rightarrow \sqrt {\rho} \;\text{strongly}\; in\; L^q(Q_T),$$
consequently
$$\sqrt {\rho^m}\mathbf {u}^m\rightharpoonup{\sqrt \rho} \mathbf{ u}
\text{ weakly in}\; L^2(Q_T).$$

 By the  previous  results, we  can  conclude that
$$\sqrt {\rho^m}\mathbf {u}^m\rightarrow\sqrt {\rho} \mathbf{ u}
 \;\text{strongly in}\; L^2(Q_T).$$

 This implies that

$$\rho u_i^m u^m_j=\sqrt{\rho^m} u_i^m \sqrt{\rho^m}u^m_j\rightarrow \rho u_i u_j,$$

\noindent
in  the  sense  of  distributions and,  finally,

$$\alpha_{i,j}=\rho u_i u_j.$$

\subsection{Initial conditions}

The  time  evolution of  the integral  average

$$t\in(0,T)\mapsto \int_{\Omega}\rho(t,x)\psi(x)dx,$$

\noindent
is  governed  by  equation (\ref{eq:13a}). This  function,  considered  as  function   of  $t$,  is  absolutely  continuous,  in  other  words,  by  virtue  of  the estimates proved,

$$\rho\in C(0,T,L_{q,w}(\Omega)).$$

The  instantaneous value of  the  density  is  representable by  a  function $\rho(t)\in L_q(\Omega)$ .  So, the  initial  datum  is  assumed  in  the  following sense

$$\lim_{t\rightarrow 0^+}\int_{\Omega}\rho(t)\psi(x)ds:=\rho(0^+)=\int_{\Omega}\rho_0(x)\psi(x)dx.$$

\noindent
Analogously, the function
$$h(t)= \int_{\Omega}\rho(t)\mathbf{u}(t)\mathbf{\phi}(x)dx$$

\noindent
represents the  instantaneous value  of  the momentum and  the  initial  data  are  assumed in  the  following  sense:
$$\lim_{t\rightarrow 0^+}\int_{\Omega}\rho(t)\mathbf{u}(t)\mathbf{\phi}(x)dx:=\rho(0^+)\mathbf{u}(0^+)=\int_{\Omega}\rho_0(x)\mathbf{u}_0\mathbf{\phi}(x)dx.$$
\noindent
Since the  density can vanish, there  is  not information  concerning  the  time  weak continuity of  the  velocity .
\subsection{Monotony  and convergence of  stress  tensor}
The  previous estimates    imply that, for $m\rightarrow \infty$,
$$ P\mathbf{T}_p\mathbf{u^m}=P\nabla\cdot \mathbf{T}\mathbf{u^m} \rightharpoonup \mathbf{\chi}
\;\text {weakly in} \; (L_p(0,T;W^1_{p,0}(\Omega)))'.$$

\noindent
In  this  section  we  formulate  the  sufficient conditions which  permit to represent $\mathbf{\chi} $ as  $ P\mathbf{T}_p(\mathbf{u})$
($P$ is  a  "projection"   on  the  space of divergence free   function).
First, we  recall that
  for  a  fixed $\mathbf{w}_j$,  we have,  for  $m\rightarrow \infty$,
$$b(\rho^m\mathbf{u}^m,\mathbf{u}^m,\mathbf{w}_j)\rightarrow b(\rho\mathbf{u},\mathbf{u},\mathbf{w}_j),$$
in the sense  of  distributions  with  respect to  $t$  (  for  example  ).
In  fact,  for $\phi(t)\in C^{\infty}_0(0,T)$,
$$\int_0^Tb(\rho^m(t)\mathbf{u}^m(t),\mathbf{u}^m(t),\mathbf{w}_j)\phi(t)dt=$$
$$\int_0^T\phi(t)\int_{\Omega}\nabla\cdot(\rho^m(t) \mathbf{u}^m(t)\otimes\mathbf{u}^m(t))\mathbf{w}_jdxdt=$$
$$-\int_0^T\phi(t)\int_{\Omega}(\rho^m u_i^m(t)u_k^m(t))\partial_kw_{ij}dxdt
\rightarrow$$
$$\int_0^T\phi(t)\int_{\Omega}(\rho u_iu_k)\partial_kw_{ij}dxdt=\int_0^Tb(\rho \mathbf{u}(t),\mathbf{u}(t),\mathbf{w}_j(t))\phi(t)dt.$$

Consequently,  (\ref{eq:12a} )  implies
$$(\partial_t(\rho(t) \mathbf{u}(t)),\mathbf{w}_j)+ (\mathbf{\chi}(t),\mathbf{w}_j)+  b(\rho(t)\mathbf{u}(t),\mathbf{u}(t),\mathbf{w_j})=
(\rho(t)\mathbf{f}(t),\mathbf{w}_j).$$

It  follows
\begin{align}
&&(\partial_t(\rho(t) \mathbf{u}(t)),\mathbf{v})+ (\mathbf{\chi}(t),\mathbf{v})+  b(\rho(t)\mathbf{u}(t),\mathbf{u}(t),\mathbf{v})=
(\rho(t)\mathbf{f}(t),\mathbf{v}),\label{eq:15a}\\ \nonumber
\end{align}

\noindent
$ \forall \mathbf{v}\in  \mathrm{V^s} $
  and a.e.  in  $(0,T)$.

\noindent
 Since $W^1_p(\Omega)\hookrightarrow  L_q(\Omega)$  with
$\frac{1}{q}=\frac{1}{p}-\frac{1}{n}$,  ($p>1$) it  follows that the
form
$$b(\rho(t)\mathbf{u}(t),\mathbf{u}(t),\mathbf{v}), $$

\noindent
 is  continuous on $\mathrm{V}_p$
provided $\frac{2}{q}+\frac{1}{p}\leq 1$, i.e. if  $p\geq\frac{3n}{n+2}$.

\noindent
Thus, by  continuous extension, (\ref{eq:15a}) holds  $ \forall
\mathbf{v}\in \mathrm{V}_p$.

\noindent
Now,  we  analyze the  time-summability  of $b(\cdot,\cdot,\cdot)$.

\noindent
We consider $\frac{1}{p}-\frac{1}{n}>0$, otherwise there
is nothing  to  prove.

The  estimates  in subsection 3.5,  the  Sobolev theorem and
interpolation theory imply
$$\sqrt{\rho}\mathbf{u}\in  L_{\infty}(0,T);L_2(\Omega))\cap L_p(0,T;L_q(\Omega))\subset L_r(0,T;L_s(\Omega)), $$
where
$$\frac{1}{r}=\frac{1-\theta}{p} +\frac{\theta}{\infty}=\frac{1-\theta}{p},\; \frac{1}{s}=\frac{1-\theta}{q} +\frac{\theta}{2}.$$
Choosing $\theta$  such  that $\frac{1}{r}=\frac{1}{s}$, that  is
$\theta =\frac{2}{n+2}$, then
$$L_{\infty}(0,T);L_2(\Omega))\cap L_p(0,T;L_q(\Omega))\subset L_{s}(Q_T).$$
Finally, the result is  implied by  $\frac{2}{s}+\frac{1}{p}\leq 1$,
i.e. $p\geq \frac{11}{5}$.  In  conclusion,  for  such a  $p$  the
function
$$  t\rightarrow b(\rho(t)\mathbf{u}(t),\mathbf{u}(t),\mathbf{v}) =-\int_{\Omega}\sqrt{\rho(t)}\mathbf{u}(t)\cdot(\sqrt{\rho(t)}\mathbf{u}(t)\cdot\nabla \mathbf{v})dx$$
 is  in  $L_1(0,T)$ with $\frac{1}{s}=\frac{n}{(n+2)p}$.

Now,  integrating (\ref{eq:15a})  over  $(0,t)$,  we  get
\begin{align}
&(\rho(t)\mathbf{u}(t),\mathbf{v})-(\rho_0\mathbf{u}_0,\mathbf{v})= \label{eq:16a}\\ \nonumber
&-\int_0^t(b(\rho(\tau)\mathbf{u}(\tau),\mathbf{u}(\tau),\mathbf{v}) +  (\mathbf{\chi}(\tau),\mathbf{v})+ (\rho f(\tau), \mathbf{v}))d\tau. \\ \nonumber
\end{align}
Passing  to  the  limit  for  $t\rightarrow 0^+$ in (\ref{eq:16a}),
we  get
\begin{align}
&\lim_{t\rightarrow 0^+}(\rho(t)\mathbf{u}(t),\mathbf{v})-(\rho_0\mathbf{u}_0,\mathbf{v})=0.
\label{eq:17a} \\ \nonumber
\end{align}
Recalling
$$
\sqrt{\rho^m(t)}\mathbf{u}^m(t) \rightarrow \sqrt{\rho(t)}\mathbf{u}(t),$$
 strongly in $L_2(\Omega)) $  for   almost  all  $t\in [0,T]$ and letting $m \rightarrow \infty$
in (\ref{eq:12a}) we  get for  $\alpha >0$
 \begin{align}
 &(\rho(t)\mathbf{u}(t),\mathbf{u}(t)) \leq (\rho_0\mathbf{u}_0,\mathbf{u}_0) +ct^{\alpha}.
 \label{eq:18a}\\ \nonumber
 \end{align}

 \noindent
 Now,  since
$$|\sqrt{\rho(t)}(\mathbf{u}(t)-  \mathbf{u}_0)|_2^2=(\rho(t)\mathbf{u}(t),\mathbf{u}(t))-
 2(\rho(t)\mathbf{u}(t),\mathbf{u}_0)+(\rho(t)\mathbf{u}_0,\mathbf{u}_0),$$
  taking  into account  (\ref{eq:18a})  and   the  properties   of $\rho(t)$,  we  conclude
  $$\lim_{t\rightarrow 0^+}|\sqrt{\rho(t)}(\mathbf{u}(t)-  \mathbf{u}_0)|_2^2=0.$$
  Thus,  the  previous results  imply
  \begin{align}
  &\lim_{t\rightarrow 0^+}(\rho(t)\mathbf{u}(t),\mathbf{u}(t))=(\rho_0\mathbf{u}_0,\mathbf{u}_0).
  \label{eq:19a}\\ \nonumber
\end{align}

\begin{proposition}-  The  following  formula  holds
  for  almost  all  $t_0$,  $t_1$  $\in  [0,T]\;(t_0<t_1)$,

\begin{align}
&\int_{t_0}^{t_1}\partial_t(\rho\mathbf{u}(t),\mathbf{u}(t))dt=\frac{1}{2}|\sqrt{\rho(t_1)}\mathbf{u}(t_1)|_2^2-
\frac{1}{2}|\sqrt{\rho(t_0)}\mathbf{u}(t_0)|_2^2-\label{eq:20a}\\ \nonumber
&\int_{t_0}^{t_1}(\nabla\cdot (\rho\mathbf{u}(t)\otimes\mathbf{u}(t)),\mathbf{u}(t) )dt.
\\ \nonumber
\end{align}
\end{proposition}

\begin{proof}

Using  convolution product or  averaging, it   is  possible  prove
that (\ref{eq:20a}) holds  true.

\noindent
In  fact, let $\phi(t)$ be a regularizing
kernel, i.e.
$$\phi_{\epsilon}:=\frac{1}{\epsilon}\phi(\frac{\cdot}{\epsilon}):=
\frac{1}{\epsilon}\phi(\frac{\tau -t}{\epsilon}),$$
with a even  mollifier $\phi \in \mathcal{D}_+(\mathbb{R})$ and $\int_{\mathbb{R}_+}\phi(t)dt=1$.

Set $$\mathbf{u}_{\epsilon}(t)=\mathbf{u}\ast\phi_{\epsilon}(t)=\frac{1}{\epsilon}\int_{|\tau- t|<
\epsilon}\mathbf{u}(\tau)\phi_{\epsilon}(\frac{\tau - t}
{\epsilon})d\tau=\int_{|z|<1}\mathbf{u}(t+\epsilon z)\phi(z)dz.
$$
Notice
$$|\partial_t \mathbf{u}_{\epsilon}|_p\leq \frac{c}{\epsilon}|\mathbf{u}|_p.$$

We  analyze
\begin{align}
&\theta_{\epsilon}(t_0,t_1):= \int^{t_1}_{t_0}(\partial_t(\rho(t) \mathbf{u}(t)),\mathbf{u}\ast\phi_{\epsilon}\ast\phi_{\epsilon}(t))dt,
\label{eq:21a}\\ \nonumber
\end{align}
with $t_0> \epsilon$  and $t_1< T-\epsilon$.

By  integration  by  parts  and by the  properties  of  the  convolution, we  have
 \begin{align}
&\theta_{\epsilon}(t_0,t_1)=(\rho \mathbf{u}(t_1),\mathbf{u}\ast\phi_{\epsilon}\ast\phi_{\epsilon}(t_1))-
(\rho \mathbf{u}(t_0),\mathbf{u}\ast\phi_{\epsilon}\ast\phi_{\epsilon}(t_0))-
 \label{eq:22a}\\  \nonumber
&\int_{t_0}^{t_1}( (\rho \mathbf{u})\ast\phi_{\epsilon}(t),\partial_t\mathbf{u}\ast\phi_{\epsilon}(t))dt.\\ \nonumber
\end{align}
We  consider the  last term of  the  right hand  side of
(\ref{eq:22a}).
\begin{align}
&\int_{t_0}^{t_1}((\rho \mathbf{u})\ast\phi_{\epsilon}(t),\partial_t\mathbf{u}\ast\phi_{\epsilon}(t))dt=
\int_{t_0}^{t_1}(\rho(t) (\mathbf{u}\ast\phi_{\epsilon}(t)),\partial_t\mathbf{u}\ast\phi_{\epsilon}(t))dt+\label{eq:23a}\\ \nonumber
&\int_{t_0}^{t_1}(\frac{1}{\epsilon}((\rho \mathbf{u})\ast\phi_{\epsilon}(t)-\rho(t)(\mathbf{u}\ast\phi_{\epsilon}(t)),
\epsilon\partial_t\mathbf{u}\ast\phi_{\epsilon}(t))dt =\\ \nonumber
&\frac{1}{2}((\rho (t_1),|\mathbf{u}\ast\phi_{\epsilon}(t_1)|^2)-
(\rho (t_0),|\mathbf{u}\ast\phi_{\epsilon}(t_0)|^2)) +\\ \nonumber
&\frac{1}{2}\int_{t_0}^{t_1}(\nabla\cdot(\rho(t)\mathbf{u}(t)),|\mathbf{u}\ast\phi_{\epsilon}(t)|^2 )dt+ \\ \nonumber
&\int_{t_0}^{t_1}(\frac{1}{\epsilon}((\rho \mathbf{u})\ast\phi_{\epsilon}(t)-\rho(t)(\mathbf{u}\ast\phi_{\epsilon}(t)),
\epsilon\partial_t\mathbf{u}\ast\phi_{\epsilon}(t))dt.\\ \nonumber
\end{align}

Now, setting  $\tau-t =\epsilon z $  and thank  to  the  continuity  equation,  we obtain  as   $\epsilon  \rightarrow  0$
\begin{align*}
&\int_{t_0}^{t_1}\frac{1}{\epsilon}(((\rho\mathbf{u})\ast\phi_{\epsilon}(t)-\rho(t)(\mathbf{u}\ast\phi_{\epsilon}(t)),
\epsilon\partial_t\mathbf{u}\ast\phi_{\epsilon}(t))dt=-\\ \nonumber
&\int_{t_0}^{t_1}(\int_{|z|<1}\frac{1}{\epsilon}(\int_{t}^{t+\epsilon z}\nabla\cdot(\rho\mathbf{ u}(s))ds ) \mathbf{u}(t+\epsilon z)\phi(z)dz,\epsilon\partial_t \int_{|z|<1}\mathbf{u}(t+\epsilon z)\phi( z)dz)dt\\ \nonumber
& \\
&\rightarrow 0 .\\
\end{align*}
Indeed, making  use  of  the strong  convergence  of  translations  in  $L_p$ and  the  strong
 convergence  of  the Steklov function, we have  for  $\epsilon \rightarrow 0$

\begin{align*}
&\mathbf\int_{|z|<1}\frac{1}{z\epsilon}(\int_{t}^{t+\epsilon z}\rho(s)\mathbf{u}(s)ds)\cdot
 \nabla \mathbf{u}(t+\epsilon z)z\phi(z)dz\rightarrow\\
 &\rho(t)\mathbf{u}(t)\cdot\nabla \mathbf{u}(t)\int_{|z|<1}z\phi(z)dz=0.\\
\end{align*}
Similarly,
 $$\epsilon\partial_t \int_{|z|<1}\mathbf{u}(t+\epsilon z)\phi( z)dz\rightarrow \mathbf{u}(t)\int_{|z|<1}
 \partial_z\phi( z)dz=0,$$
a.e.  in  $(0,T)$.

In  conclusion,  as   $\epsilon \rightarrow 0$, we   get
\begin{align*}
&\theta_{\epsilon}(t_0,t_1)\rightarrow \int^{t_1}_{t_0}(\partial_t(\rho(t) \mathbf{u}(t)),\mathbf{u}(t))dt=\\
&\frac{1}{2}|\sqrt{\rho(t_1)}\mathbf{u}(t_1)|_2^2-
\frac{1}{2}|\sqrt{\rho(t_0)}\mathbf{u}(t_0)|_2^2-
\int_{t_0}^{t_1}(\nabla\cdot (\rho\mathbf{u}(t)\otimes\mathbf{u}(t)),\mathbf{u}(t))dt.
\end{align*}
Thus  (\ref{eq:20a})  is  proved.
\end{proof}
Now,  using (\ref{eq:20a}) and  setting  $\mathbf{u} =\mathbf{v}$
in the  relation
\begin{align}
&(\partial_t(\rho(t) \mathbf{u}(t),\mathbf{v}(t))+ (\mathbf{\chi}(t),\mathbf{v}(t))+ (\nabla\cdot(\rho(t)\mathbf{u}(t)\otimes\mathbf{u}(t)),\mathbf{v}(t))=\label{eq:24a}\\ \nonumber &(\rho(t)\mathbf{f}(t),\mathbf{v}(t)),
\\ \nonumber
\end{align}

\noindent
we  get

\begin{align}
&\frac{1}{2}((\rho (t_1),|\mathbf{u}(t_1)|^2)-(\rho_0,|\mathbf{u}_0|^2))+
\int_{0}^{t_1}(\mathbf{\chi}(t),\mathbf{u}(t)) dt= \label{eq:25a}\\ \nonumber
&\int_{0}^{t_1}(\rho(t)\mathbf{f}(t),\mathbf{u} (t ))dt.\\ \nonumber
\end{align}

\subsection{Conclusion}

Now we prove  that  $$\mathbf{ \chi}=P\mathbf{T}_p\mathbf{u}.$$

  We use  the  relation
 \begin{align}
  &\frac{1}{2}|\sqrt{\rho}\mathbf{u}(t)|_2^2 + \int_0^t(\mathbf{\chi}(\tau),\mathbf{u}(\tau))d\tau\geq
 \int_0^t(\rho \mathbf{f}(\tau),\mathbf{u}(\tau))d\tau +\frac{1}{2}|\sqrt{\rho_0}\mathbf{u}_0|_2^2.
 \label{eq:26a}\\ \nonumber
 \end{align}

 We  introduce,  for  $\mathbf{\phi}\in L_p(0,T;V_p)$,

$$X^{s}_m=\int_{0}^s(\mathbf{T}_p\mathbf{u^m}(t)-\mathbf{T}_p\mathbf{\phi}(t),\mathbf{u^m}(t)-\mathbf{\phi}(t))dt+
\frac{1}{2}|\sqrt{\rho}\mathbf{u}(s)|^2_2$$
a.e. in $(0,T)$.

 Thanks to  the  monotony of  the  operator  $\mathbf{T}_p$,  we  deduce  that
\begin{align}
 &\liminf_{m  \rightarrow \infty}X^s_m\geq \frac{1}{2}|\sqrt{\rho}\mathbf{u}(s)|^2_2.
\label{eq:27a}\\ \nonumber
\end{align}
(\ref{eq:11a}) implies
\begin{align}
&X^{s}_m=\int_0^s(\rho(t)\mathbf{f}(t),\mathbf{u}^m(t))dt
+\frac{1}{2}|\sqrt{\rho^m}\mathbf{u}^m(0)|^2_2-\label{eq:28a}\\ \nonumber
&\int_0^s(\mathbf{T}_p\mathbf{u}^m(t),\mathbf{\phi}(t))dt-
\int_0^s(\mathbf{T}_p\mathbf{\phi}(t),\mathbf{u}^m(t)-\mathbf{\phi}(t))dt\rightarrow {X}^s,
\\ \nonumber
\end{align}

\noindent
with
$${X}^s= \int_{0}^s(\rho\mathbf{f}(t),\mathbf{u}(t))dt +\frac{1}{2}|\sqrt{\rho_0}\mathbf{u}_0|^2_2-\int_0^s(\mathbf{\chi} (t),
\mathbf{\phi}(t))dt-
\int_{0}^s(\mathbf{T}_p\mathbf{\phi} (t),\mathbf{u}(t)-\mathbf{\phi}(t))dt   ,$$ and   by  (\ref{eq:27a}) we
get
\begin{align*}
&\int_0^s(\rho\mathbf{f}(t),\mathbf{u}(t))dt +\frac{1}{2}|\sqrt{\rho_0}\mathbf{u}_0|^2_2-\int_0^s(\mathbf{\chi}(t),\mathbf{\phi}(t))dt-\\
&\int_0^s(\mathbf{T}_p\mathbf{\phi}(t),\mathbf{u}(t)-\mathbf{\phi}(t))dt\geq
\frac{1}{2}|\sqrt{\rho}\mathbf{u}(s)|^2_2,\\
\end{align*}

\noindent and, finally, (\ref{eq:26a}) implies

$$\int_{0}^{s}(\mathbf{\chi}(t)-\mathbf{T}_p\mathbf{\phi}(t),\mathbf{u}(t)-\mathbf{\phi}(t))dt\geq 0, $$
 for almost  $s$.

  Since $\mathbf{T}_p $ is a     monotone and  hemicontinuous  operator, we have

$$\mathbf{\chi}=P\mathbf{T}_p\mathbf{u}.$$

The couple ($\rho,\mathbf{u}$) is  a  weak  solution of   system ($8$).
Theorem 2  is  proved.

\end{proof}

\section{Periodic problem}

This section is devoted to the existence of periodic solution of
problem  (\ref{eq:4a}), i.e.  we  look  for  a  solution  of
\begin{align}
 &\rho(\partial_t\mathbf{ u} +  \mathbf{ u}\cdot \nabla  \mathbf{u})
 - \nabla \cdot(\mathbf{ T}\mathbf{ u} - \pi I)  =\rho \mathbf{ f},\label{eq:p0}\\ \nonumber
 &\partial_t \rho +\nabla \cdot(\rho \mathbf{ u})=0, \\ \nonumber
 &\nabla \cdot \mathbf{ u} =0,  \;
 \mathbf{ u}(0)=\mathbf{ u}(T),\; \mathbf{ u}_{\Gamma}=0, \; \rho(0)=\rho(T).\\ \nonumber
 \end{align}

\begin{theorem}
  Assume the hypotheses  of  Theorem 3.1. Further
  $\mathbf{f}(x,t) $ is time periodic  with  period  $T$ in  $L_2(Q_T)$,  for  convenience,  $0< \alpha\leq \rho_0\leq \beta$ and $p\geq \frac{11}{5}$.                                              Then  there  exists  a  weak  solution  of  system (\ref{eq:p0}).
\end{theorem}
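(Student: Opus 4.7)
The plan is to adapt the Faedo-Galerkin construction of Theorem 3.1 to produce approximate periodic solutions, derive a priori bounds that are uniform \emph{over one period} (hence independent of the initial data), apply a fixed-point theorem to the time-$T$ map to close velocity and density simultaneously, and finally pass to the limit. First, I would set up Galerkin approximations $(\mathbf{u}^m,\rho^m)$ exactly as in Section 3, the only difference being that the data $(\mathbf{u}_0^m,\rho_0^m)$ are now \emph{unknowns} to be determined so as to satisfy $\mathbf{u}^m(T)=\mathbf{u}_0^m$ and $\rho^m(T)=\rho_0^m$.

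The decisive a priori estimate comes from testing the Galerkin momentum equation with $\mathbf{u}^m$ and integrating over $(0,T)$: periodicity cancels the boundary terms $\tfrac12|\sqrt{\rho^m(T)}\mathbf{u}^m(T)|_2^2-\tfrac12|\sqrt{\rho_0^m}\mathbf{u}_0^m|_2^2$, leaving
\begin{equation*}
c_2\int_0^T\|\mathbf{D}(\mathbf{u}^m)\|_p^p\,dt\;\le\;\int_0^T(\rho^m\mathbf{f},\mathbf{u}^m)\,dt.
\end{equation*}
Using $\rho^m\le \beta$, Korn-Poincar\'e and Young's inequality this yields $\int_0^T\|\mathbf{u}^m\|_{V_p}^p\,dt\le C$ with $C$ depending only on $\beta$ and $\|\mathbf{f}\|_{L_{p'}(0,T;(V_p)')}$. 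Returning to the energy identity over subintervals then gives a uniform bound on $\sup_t|\sqrt{\rho^m}\mathbf{u}^m(t)|_2^2$, and the maximum principle for the transport equation preserves $\alpha\le\rho^m\le\beta$ throughout.

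For the fixed-point step I would consider the Poincar\'e map
\begin{equation*}
\mathcal{P}_m:(\mathbf{u}_0^m,\rho_0^m)\longmapsto(\mathbf{u}^m(T),\rho^m(T)),
\end{equation*}
defined on the convex set $\mathcal{K}_m=\{(\mathbf{v},\sigma)\in W_{(m)}\times L_\infty(\Omega):|\mathbf{v}|_2\le R,\ \alpha\le\sigma\le\beta\}$ equipped with the product of the strong topology on the finite-dimensional space $W_{(m)}$ and the weak-$*$ topology on $L_\infty$. By the a priori bound and the maximum principle, $\mathcal{P}_m$ maps $\mathcal{K}_m$ into itself; continuous dependence of the Galerkin ODE and of the transport equation (via characteristics, as in Section 3.2) on the data implies continuity of $\mathcal{P}_m$ in this topology. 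Since $\mathcal{K}_m$ is convex and compact (weak-$*$ compactness of the $L_\infty$-ball), the Schauder-Tychonoff theorem yields a fixed point, i.e.\ a periodic Galerkin solution.

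Passing to the limit $m\to\infty$ is carried out by the same scheme as in Theorem 3.1: the uniform estimates give weak compactness of $\{\mathbf{u}^m\}$ in $L_p(0,T;V_p)$, strong convergence of $\rho^m$ in $L_q(Q_T)$, identification of $\alpha_{ij}=\rho u_iu_j$ via the Lions-Aubin/Ascoli argument, and identification of $\chi=P\mathbf{T}_p\mathbf{u}$ by the monotonicity trick used at the end of Section 3. The hypothesis $p\ge 11/5$ is precisely what Section 3.7 needed to give meaning to $b(\rho\mathbf{u},\mathbf{u},\mathbf{v})$ for $\mathbf{v}\in V_p$, and it re-enters here identically. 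Periodicity persists in the limit because $\rho\in C([0,T];L_{q,w})$ and $\rho\mathbf{u}\in C([0,T];(V^s)'_w)$ allow evaluation at $t=0,T$ and the relations $\mathbf{u}^m(0)=\mathbf{u}^m(T)$, $\rho^m(0)=\rho^m(T)$ are preserved. The main technical obstacle is the fixed-point argument itself, precisely because the transport equation provides no smoothing on $\rho$; this forces the choice of the weak-$*$ topology on $L_\infty$ and the corresponding verification that $\mathcal{P}_m$ is continuous in that (non-metric) topology, which is why I confine the argument to the Galerkin level where the velocity component lives in a finite-dimensional space.
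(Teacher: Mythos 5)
Your route is genuinely different from the paper's. The paper does not keep the pure transport equation and apply a topological fixed point to the joint Poincar\'e map; instead it regularizes the continuity equation into a parabolic one, $\partial_t\rho^m+\mathbf{u}^{m-1}\cdot\nabla\rho^m-\frac{1}{m}\Delta\rho^m+\rho^m=\rho^{m-1}$, inserts the compensating term $\frac{1}{2}(\frac{1}{m}\Delta\rho^m-\rho^m+\rho^{m-1})\mathbf{u}^m$ into a momentum equation linearized through $\mathbf{u}^{m-1}\cdot\nabla\mathbf{u}^m$, and obtains periodicity of each component separately as the fixed point of an $L_2$-\emph{contraction}: the map $\mathrm{S}$ for the density and the map $\mathrm{Z}$ acting on $\sqrt{\rho^m(0)}\,\mathbf{u}^m(0)$ with the density already periodic. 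That decoupling is not cosmetic: it lets the invariant ball live in the single variable $\sqrt{\rho^m(0)}\,\mathbf{u}^m(0)$, with $\rho^m$ frozen.

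The gap in your argument is the self-mapping claim $\mathcal{P}_m(\mathcal{K}_m)\subseteq\mathcal{K}_m$, which you assert but do not verify. The quantity that decays along trajectories is $y(t)=|\sqrt{\rho^m(t)}\mathbf{u}^m(t)|_2^2$: the energy inequality with Korn--Poincar\'e and $p\ge 2$ gives $y(T)\le e^{-\kappa T}y(0)+C$ with $\kappa$ of order $1/\beta$. Translating this into the constraint you actually imposed, $|\mathbf{u}_0^m|_2\le R$, costs the factor $\beta/\alpha$:
\begin{equation*}
|\mathbf{u}^m(T)|_2^2\;\le\;\tfrac{1}{\alpha}\,y(T)\;\le\;\tfrac{\beta}{\alpha}e^{-\kappa T}|\mathbf{u}_0^m|_2^2+\tfrac{C}{\alpha},
\end{equation*}
which is $\le R^2$ only when $\frac{\beta}{\alpha}e^{-\kappa T}<1$ --- false for general $\alpha,\beta,T$. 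The set on which the dynamics genuinely contracts, $\{(\mathbf{v},\sigma):\int_\Omega\sigma|\mathbf{v}|^2dx\le R^2\}$, is not convex in the pair $(\mathbf{v},\sigma)$ (the Hessian of $(s,v)\mapsto s|v|^2$ has determinant $-4|v|^2$), so Schauder--Tychonoff cannot be applied on it. You must either restrict to $\beta/\alpha$ near $1$, or decouple density from velocity as the paper does. A secondary, repairable point: continuity of the density component of $\mathcal{P}_m$ in the weak-$*$ topology of $L_\infty$ --- i.e.\ of $\rho_0^m\mapsto\rho_0^m\circ y^m(0,T,\cdot)$ where the characteristics $y^m$ themselves depend on $\rho_0^m$ through the Galerkin matrix $a_{ij}=\int_\Omega\rho^m\mathbf{w}_i\mathbf{w}_jdx$ --- does hold, but only via the measure-preserving change of variables and a weak-times-strong product argument; it should not be dismissed as ``continuous dependence via characteristics.''
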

 \begin{proof}

To  prove the  existence of  a  weak  solution of (\ref{eq:p0}) we proceed as in Theorem 3.1.
 We follow the next scheme: first,  we assign the
velocity and consider a parabolic approximation of the continuity equation (a diffusion equation) proving the existence
of  a periodic solution and associated a priori estimates. Next, we pass to consider the existence of a
linearized momentum equation and then we conclude the scheme
passing  to  the  limit.

Since  we  have  treated, in  subsection 3.3, with the  semi-Faedo-Galerkin approximation, we  start with
\begin{align}
&\partial_t \rho^m(t) + \mathbf{u}^{m-1}(t) \cdot \nabla \rho^m(t)- \frac{1}{m} \Delta \rho^m(t) +
 \rho^m(t) = \rho^{m-1}(t), \label{eq:p1}\\ \nonumber
& \partial_{\mathbf{n}} \rho(x,t) = 0  \; on \; \Gamma, \;\rho^m(0) = \rho^m_0.
\\ \nonumber
\end{align}
$\mathbf{u}^{m}$  is  defined  in  subsection   3.3.
We assume $0<  \alpha\leq \rho^m_0 \leq \beta$  ($\alpha, \beta$  real positive numbers)  and $\mathbf{u}^{m-1}$,  $\rho^{m-1}$ are assigned  periodic  functions and  $\alpha \leq \rho^{m-1} \leq  \beta$.

\noindent
The existence of a
solution of problem (\ref{eq:p1}) is well known in literature.  We
need some estimates of the solution $\rho^m$.

\noindent

First, we prove that $\alpha\leq \rho^m \leq \beta$ ( maximum  principle).

In fact, multiplying
(\ref{eq:p1}) by $(\rho^m-\alpha)^- := min(0, \rho^m-\alpha)$, after integration by
parts, we have
\begin{align*}
&d_t|(\rho^m(t) -\alpha)^-|_2^2 + \frac{2}{m}| \nabla(\rho^m(t) -\alpha)^-|_2^2 +  2|(\rho^m(t) -\alpha)^-|_2^2 = \\
&2((\rho^{m-1}(t) -\alpha),(\rho^m(t)-\alpha)^-)\leq 0.\\
\end{align*}
Consequently, $\rho^m(t) \geq \alpha$  for  all   $t$.

  Analogously, multiplying (\ref{eq:p1}) by $(\rho^m-\beta)^+ := sup(0,
\rho^m-\beta)$ and,  after integration by parts, we get
\begin{align*}
&d_t|(\rho^m(t) -\beta)^+|_2^2 + \frac{2}{m}|\nabla (\rho^m(t) -\beta)^+|_2^2 + 2|(\rho^m(t)-\beta)^+|_2^2 = \\
&2((\rho^{m-1}(t)- \beta),(\rho^m(t)-\beta)^+)\leq 0,
\end{align*}
thus $\rho^m\leq \beta$ for all  $t$.

\noindent
 Now, following the procedure of subsection 3.3, we  get $H^2$-regularity;   multiplying   (\ref{eq:p1})  by
 $-\frac{1}{m}\Delta \rho^m$,  then  integrating   over  $Q_t$ and
recalling  $|\nabla \rho^m|^2_4\leq  c|\rho^m|_{\infty}|\Delta \rho^m|_2 $,

\noindent
we obtain
\begin{align*}
&\frac{1}{m}|\nabla \rho^m(t)|_2^2 + \frac{1}{m ^2}\int_0^t|\Delta \rho^m(\tau)|_2^2d\tau
+\frac{1}{m}\int_0^t|\nabla\rho^m(\tau)|_2^2d\tau\leq \\
&\frac{2}{m}|\nabla
\rho^m_0|_2^2 + c\int_0^t(\|u^{m-1}(\tau)\|^2 +|\rho^{m-1}(\tau)|^2_2)d\tau.\notag\\
\end{align*}

Similarly, we  can obtain     $H^3$- estimate of $\rho^m$ but  we  omit  details.
  By the  previous  estimates,  $\partial_t\rho^m \in  L_2(Q_T)$ .
The existence of a solution of (\ref{eq:p1}) permits to define a
map $\mathrm{S}: L_2(\Omega)\rightarrow L_2(\Omega)$,
\begin{align}
\mathrm{S}\rho^m(0)=\rho^m(T). \label{eq:p2}\\\nonumber
\end{align}
$\mathrm{S}\: \text {is a continuous map in}\; L_2(\Omega).$

In fact, let $\rho_1^m, \rho_2^m$ be
solutions of (\ref{eq:p1})
corresponding to initial conditions

\noindent
$\rho^m_1(0),\rho^m_2(0)$,
 respectively ( with  bounds $\alpha,\beta$ ).

 \noindent
  By (\ref{eq:p1}) we get

$$|\mathrm{S}\rho^m_1(0)-\mathrm{S}\rho^m_2(0)|_2^2 = |\rho^m_1(T)-\rho^m_2(T)|_2^2\leq|\rho^m_1(0)-\rho^m_2(0)|^2_2e^{-c_m T},$$
consequently,

$$\mathrm{ S} \;\text  {is a }\;L_2-\text {contraction map.}$$
Then,  the  fixed  point  of $\mathrm{S}$  yields   periodic  solution
of (\ref{eq:p1}).

\noindent
Briefly, we  show  that  the  fixed  point  is  in  $H^1(\Omega)$.

Indeed,  $\rho^m\in  L_2(0,T;H^1(\Omega))$ then  in $t=\epsilon >0$,
$\rho^m(\epsilon,x)\in H^1(\Omega)$.  Considering  the equation (\ref{eq:p1}) in the  interval $[\epsilon,T]$, we  can show that
  $\rho^m\in  C(\epsilon,T;H^1(\Omega))$ and    follows  $\rho^m(T)\in H^1(\Omega)$,  consequently  $\rho^m(0)\in  H^1(\Omega)$.

Now, we pass to consider the existence of a periodic solution of the momentum equation.

We assume, appealing  to  section 3,  the  existence  of
a solution of  the following system
\begin{align}
&\rho^m(\partial_t\mathbf{u}^m +  \mathbf{u}^{m-1}\cdot \nabla \mathbf{u}^m -
\mathbf{f}) +\frac{1}{2}(\frac{1}{m}\Delta \rho^m -\rho^m+\rho^{m-1})\mathbf{u}^m - \label{eq:p4}\\ \nonumber
&\nabla\cdot\mathbf{T} \mathbf{u}^m
+ \nabla \pi^m = 0,\\ \nonumber
&\nabla\cdot \mathbf{u}^m=0, \;\mathbf{u}^m(0)=\mathbf{u}^m_0. \\ \nonumber
\end{align}
Now, multiplying $(\ref{eq:p4})_1$ by
$\mathbf{u}^m$, integrating  by parts, applying the

\noindent
Poincar\'e's inequality  and  taking  into  account  the  diffusion  equation, we
have
\begin{align*}
&|\sqrt{\rho^m(t)}\mathbf{u}^m(t)|^2_2 \leq e^{-ct}(|\sqrt{\rho^m(0)}\mathbf{u}^m(0)|^2_2
+\int_0^te^{cs}|\rho \mathbf{f}(s)|^2_2ds).\\ \nonumber
\end{align*}

 We consider now the map
 $$ \mathrm{Z}:\sqrt{\rho^m(0)} \mathbf{u}^m(0) \rightarrow \sqrt{\rho^m(T)}\mathbf{u}^m(T).$$

We  prove  that $\mathrm{Z}$  is  a  contraction on  $L_2(\Omega)$.

First,
let $B(R)$ be a ball with radius $R$.

If $R\geq
(1-e^{-cT})^{-1}\int_0^T|\rho f|^2_2dt$ we can  prove  easily

       $$\mathrm{Z}B(R) \subseteq B(R).$$
Let $\mathbf{u}^m_1,\mathbf{u}^m_2$ be solutions of problem (\ref{eq:p4}) with initial conditions $\mathbf{u}^m_1(0),\mathbf{u}^m_2(0)$, respectively.
Thus,
$\mathbf{U}^m =\mathbf{u}^m_1-\mathbf{u}^m_2$ satisfies
\begin{align}
  &\rho^m(\partial_t\mathbf{U}^m +  \mathbf{u}^{m-1}\cdot \nabla \mathbf{U}^m ) -
  (\nabla\cdot(\mathbf{T}(\mathbf{u}_2^m)\mathbf{u}_2^m)+
  \nabla\cdot(\mathbf{T}(\mathbf{u}_1^m)\mathbf{u}_1^m)-\label{eq:p5}\\ \nonumber
&\frac{1}{2}\rho^m \mathbf{U}^m +
\frac{1}{2}\rho^{m-1}\mathbf {U}^m+\frac{1}{2m}\Delta \rho^m \mathbf{ U}^m +
\nabla \bar \pi^m = 0. \\ \nonumber
 \end{align}
Multiplying (\ref{eq:p5}) by $\mathbf{U}^m $,  integrating  the  result  over  $\Omega$,
 taking into  account  the  diffusion equation (\ref{eq:p1}) and the  structure  conditions,  the Gronwall's lemma implies
$$
|\sqrt {\rho^m (T)} \mathbf{U}^m(T)|_2^2\leq e^{-ct}|\sqrt {\rho^m(0)} \mathbf{U}^m(0)|^2_2.
$$
Thus the  map  $\mathrm{Z}$ is  a $L_2$- contraction and then
 the  existence of  a  periodic solution  of (\ref{eq:p4}) follows  as the   fixed  point  of  $\mathrm{Z}$.
 From now on  the proof of the existence of periodic weak solution proceeds  as in  Theorem 3.1.
\end{proof}

In the case of unbounded domains it is not possible to simply extend
the methods used for the bounded domains, since these involve, in
general, tools such as the Poicar\'e's inequality, compact
embedding, etc., that no longer holds for unbounded domains, in
general. Consequently, it is necessary to resort different
arguments. In \cite{rif31} the author solved the open problem of the
existence of weak and strong periodic solutions for the
Navier-Stokes equations in exterior domains using an alternative method. For the time being, this is  the only result for three-dimensional case
in exterior  domains.

\section{Variational  inequality}
 The differential inequalities appear in the presence of additional constraints
imposed on the unknowns of the problem in order to describe particular physical situations.
These type of problems  have been studied for instance in \cite{rif4}, \cite{rif7}, \cite{rif10},
\cite{rif16}, \cite{rif20}, \cite{rif21}, \cite{rif22}, \cite{rif25}, \cite{rif26}. In  \cite{rif20} the existence of  a global
weak solution of  the Navier-Stokes  equations  in a  convex set
is  obtained  under some additional  conditions.  In  \cite{rif7}  the  existence problem was  proved
 for  two  dimensional case   using  a  method of non linear  semigroup.
In \cite{rif26}   the  existence  of a  global  weak solution  was solved, in  three  dimension and in
the  general case,    for density-dependent Navier- Stokes equations, applying  the compactness method.
For the time being, this is  the only result for three-dimensional case.

The lines of the proof of  the existence for  variational inequality are  traced in subsection 3.3;
we  use  semi-Faedo-Galerkin  approximation jointly with a penalization operator.

\subsection{Formulation  of  the problem}
The  functional spaces  are those of  the section 3.  In  addition, for  convenience,  let  $\mathrm{K}\subset  \mathrm{H}$ be  an  arbitrary closed  convex set (independent  of  time) with  $0\in \mathrm{K}$.
We  assume  the  initial  data $\mathbf{u}_0\in  \mathrm{K}$, $ \rho_0\in L_{\infty}(\Omega)$ and  $0\leq \rho_0\leq  M $, the  external  force $\mathbf{f}\in L_2(Q_T)$ ( for  convenience).

We   consider  the following system for  $\mathbf{u},\mathbf{v}\in \mathrm{K}$,
\begin{align}
 &(\rho(t)\partial_t\mathbf{u}(t),\mathbf{v}(t)-\mathbf{u}(t)) + (\rho(t) \mathbf{u}(t)\cdot \nabla \mathbf{u}(t),   \mathbf{v}(t)-\mathbf{u}(t))
 - \label{eq:v1}\\  \nonumber
 &(\nabla \cdot\mathbf{T}\mathbf {u}(t),\mathbf{v}(t)-\mathbf{u}(t))  \geq
 (\rho(t) \mathbf{f}(t),\mathbf{v}(t)-\mathbf{u}(t)), \\ \nonumber
 &\partial_t \rho +\nabla \cdot(\rho \mathbf{u})=0, \\ \nonumber
 &\nabla \cdot \mathbf{ u} =0, \mathbf{ u}(0)=\mathbf{ u}_0,\; \mathbf{ u}_{\Gamma}=0, \;\rho(0)=\rho_0 .\\ \nonumber
\end{align}

Then, we  give  the  definition  of weak solution of  problem (\ref{eq:v1}). In  what follows  $\Omega$  is  a  bounded domain  in $\mathbb{R}^3$.

\begin{definition}  $(\mathbf{u},\rho)$  is  a  weak  solution  of (\ref{eq:v1})
if  hold
\begin{align}
 &\int_0^T((\rho\partial_t\mathbf{v },\mathbf{v}-\mathbf{u}) + (\rho \mathbf{ u}\cdot \nabla
  \mathbf{u},\mathbf{v}-\mathbf{u})
 - (\nabla \cdot\mathbf{T} \mathbf{u},\mathbf{v}-\mathbf{u})-  \label{eq:v2}\\ \nonumber
&(\rho \mathbf{ f},\mathbf{v}-\mathbf{u}))dt \geq
-|\sqrt{\rho(0)}(\mathbf{v}(0)-\mathbf{u}(0))|^2,\\ \nonumber
 &\partial_t\rho +\nabla \cdot(\rho \mathbf{ u})=0,\;  \text {in the sense of distributions}, \\ \nonumber
 &\nabla \cdot \mathbf{ u} =0,
  \mathbf{ u}(0)=\mathbf{ u}_0,\; \mathbf{ u}_{\Gamma}=0, \rho(0)=\rho_0,\\ \nonumber
 \end{align}
with
$\sqrt{\rho}\mathbf{u}\in L_{\infty}((0,T;L_2(\Omega)); \;\mathbf{u}\in L_p(0,T;\mathrm{V}^1_p\cap \mathrm{K}),\;
\mathbf{v}\in L_p(0,T;\mathbf{V}^1_p\cap \mathrm{K}),$

\noindent
$0\leq \rho \leq M,$
$\partial_t\mathbf{v}\in L_{p'}(0,T;L_{p'}(\Omega))$ .
\end{definition}
Briefly, we  deduce $(\ref{eq:v2})_1$ from  (\ref{eq:v1}).

Multiplying $(\ref{eq:v1})_3$ by $\mathbf{u}\cdot(\mathbf{v}-\mathbf{u})$,  integrating over $\Omega$ and
 adding the  result to
$(\ref{eq:v1} )_1$, integration in  $t$ gives
\begin{align}
 &\int_0^T((\partial_t(\rho\mathbf{u}(t)),\mathbf{v}(t)-\mathbf{u}(t)) -
 (\rho \mathbf{u}(t)\otimes\mathbf{u}(t),\nabla(\mathbf{v}(t)-\mathbf{u}(t))) -\label{eq:v3}
 \\ \nonumber
& (\nabla \cdot\mathbf{ T}\mathbf{u}(t) ,\mathbf{v}(t)-\mathbf{u}(t)))dt \geq
\int_0^T(\rho \mathbf{f}(t),\mathbf{v}(t)-\mathbf{u}(t))dt, \\ \nonumber
\end{align}
 and,  easily, follows
 \begin{align}
 &\int_0^T ((\partial_t(\rho\mathbf{v}(t)),\mathbf{v}(t)-\mathbf{u}(t)) -
 (\rho \mathbf{u}(t)\otimes\mathbf{u}(t),\nabla(\mathbf{v}(t)-\mathbf{u}(t)))-\label{eq:v4}
 \\ \nonumber
 & (\nabla \cdot\mathrm{T}\mathbf{u}(t),\mathbf{v}(t)-\mathbf{u}(t)) -
 (\rho \mathbf{f}(t),\mathbf{v}(t)-\mathbf{u}(t)))dt\geq\\ \nonumber
  &\int_0^T(\partial_t(\rho(\mathbf{v(t)-u(t)}) ),\mathbf{v}(t)-\mathbf{u}(t))dt.  \\ \nonumber
\end{align}
Notice
$$\int_0^T(\mathbf{v}(t)\partial_t\rho(t), \mathbf{v}(t)-\mathbf{u}(t))dt=
-\int_0^T(\nabla \cdot(\rho\mathbf{u}(t))\mathbf{v}(t),\mathbf{v}(t)-\mathbf{u}(t))dt,$$
and
$$\int_0^T((\mathbf{v(t)-u(t)})\partial_t \rho(t), \mathbf{v}(t)-\mathbf{u}(t))dt= $$
$$-\int_0^T(\nabla \cdot(\rho\mathbf{u}(t))(\mathbf{v}(t)-\mathbf{u}(t)),(\mathbf{v}(t)-\mathbf{u}(t))dt.$$
 In  addition,

 \begin{align*}
 & - (\nabla\cdot (\rho\mathbf{u}(t))\mathbf{v}(t),\mathbf{v}(t)-\mathbf{u}(t))+
 \frac{1}{2}(\nabla\cdot (\rho\mathbf{u}(t))(\mathbf{v}(t)-\mathbf{u}(t)),\mathbf{v}(t)-\mathbf{u}(t))-\\
 &(\rho\mathbf{u}(t)\cdot \nabla (\mathbf{v}(t)-\mathbf{u}(t)), \mathbf{u}(t))=
 (\nabla\cdot (\rho\mathbf{u}(t))\mathbf{v}(t),\mathbf{v}(t)-\mathbf{u}(t))-\\
 & (\rho\mathbf{u}(t)\cdot
 \nabla(\mathbf{v}(t)-\mathbf{u}(t)), \mathbf{v}(t))=
 (\rho \mathbf{u}(t)\cdot \nabla \mathbf{v}(t), \mathbf{v}(t)-\mathbf{u}(t)), \\
 \end{align*}
thus
$(\ref{eq:v1})_1$  assumes the following form
\begin{align*}
 &\int_0^T((\rho(t)\partial_t\mathbf{v}(t),\mathbf{v}(t)-\mathbf{u}(t)) +
 (\rho(t)\mathbf{u}(t)\cdot \nabla \mathbf{v}(t),\mathbf{v}(t)-\mathbf{u}(t))-\\
 & (\nabla \cdot\mathbf{T}\mathbf{u}(t),\mathbf{v}(t)-\mathbf{u}(t)) -
 (\rho \mathbf{ f}(t),\mathbf{v}(t)-\mathbf{u}(t)))dt\geq\\
&\frac{1}{2}(|\sqrt{\rho(T)}(\mathbf{v}(T)-\mathbf{u}(T))|^2 -|\sqrt{\rho(0)}(\mathbf{v}(0)-\mathbf{u}(0))|^2\geq
 -|\sqrt{\rho(0)}(\mathbf{v}(0)-\mathbf{u}(0))|^2.\\
\end{align*}

We  notice  that in  the  last   right hand side the  term $({\rho(T)}(\mathbf{v}(t)-\mathbf{u}(T)),\mathbf{v}(t)-\mathbf{u}(T))$  is  dropped.
The  reason is  clear.  We  are  not  able  to  prove  that,  for  Galerkin  approximations  $\rho^m,\mathbf{u^m}$,
 $\{({\rho^m}\mathbf{u}^m(t),\mathbf{u}^m(t))\}$ is  a compact  set  in  $C(0,T)$ (see Theorem 3).

\begin{theorem}
Assume the following hypotheses:
$$\mathbf{f}\in L_2(Q_T ), \;\mathbf{ u}_0\in L_p(\Omega)\cap\mathrm{K},$$
$$\rho_0\in L_{\infty}(\Omega),\;0\leq \rho_0\leq  M <\infty,  p\geq \frac{11}{5}.$$
Then, there exists a weak solution $(\mathbf{u},\rho)$ of the problem (\ref{eq:v2}).
\end{theorem}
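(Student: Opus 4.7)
The plan is to combine the semi-Faedo-Galerkin construction of subsection 3.3 with a penalization of the convex constraint. Let $\beta:\mathrm{H}\to \mathrm{H}$, $\beta(\mathbf{v})=\mathbf{v}-P_{\mathrm{K}}\mathbf{v}$, be the monotone Lipschitz penalty operator associated to the closed convex set $\mathrm{K}$, so that $\beta(\mathbf{v})=0$ iff $\mathbf{v}\in\mathrm{K}$. For $\epsilon>0$ and $m\in\mathbb{N}$ I look for $(\mathbf{u}^{m,\epsilon},\rho^{m,\epsilon})$ with $\mathbf{u}^{m,\epsilon}\in\mathrm{W}_{(m)}$ solving, on $\mathrm{W}_{(m)}$,
\begin{align*}
&\rho^{m,\epsilon}\partial_t\mathbf{u}^{m,\epsilon}+\rho^{m,\epsilon}\mathbf{u}^{m,\epsilon}\cdot\nabla\mathbf{u}^{m,\epsilon}-\nabla\cdot\mathbf{T}(\mathbf{u}^{m,\epsilon})+\tfrac{1}{\epsilon}P_m\beta(\mathbf{u}^{m,\epsilon})+\nabla\pi^{m,\epsilon}=\rho^{m,\epsilon}\mathbf{f},\\
&\partial_t\rho^{m,\epsilon}+\mathbf{u}^{m,\epsilon}\cdot\nabla\rho^{m,\epsilon}=0,\quad \nabla\cdot\mathbf{u}^{m,\epsilon}=0,\\
&\mathbf{u}^{m,\epsilon}(0)=\mathbf{u}_0^m\in\mathrm{W}_{(m)}\cap\mathrm{K},\;\rho^{m,\epsilon}(0)=\rho_0^m,\;1/m\leq\rho_0^m\leq M.
\end{align*}
Solving $\rho^{m,\epsilon}$ by characteristics and the projected momentum equation by Carath\'eodory theory works exactly as in subsection 3.3, since the Lipschitz penalty term does not disrupt the local well-posedness argument.

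Testing the projected momentum equation against $\mathbf{u}^{m,\epsilon}$ and combining with the continuity equation yields the energy identity
\[
\tfrac{1}{2}|\sqrt{\rho^{m,\epsilon}(t)}\mathbf{u}^{m,\epsilon}(t)|_2^2+\int_0^t(\mathbf{T}\mathbf{u}^{m,\epsilon},\mathbf{D}\mathbf{u}^{m,\epsilon})d\tau+\tfrac{1}{\epsilon}\int_0^t(\beta(\mathbf{u}^{m,\epsilon}),\mathbf{u}^{m,\epsilon})d\tau=\int_0^t(\rho^{m,\epsilon}\mathbf{f},\mathbf{u}^{m,\epsilon})d\tau+\tfrac{1}{2}|\sqrt{\rho_0^m}\mathbf{u}_0^m|_2^2.
\]
Monotonicity of $\beta$ with $\beta(0)=0$ gives $(\beta(\mathbf{v}),\mathbf{v})\geq|\beta(\mathbf{v})|_2^2$, so I obtain bounds uniform in $m,\epsilon$ for $\sqrt{\rho^{m,\epsilon}}\mathbf{u}^{m,\epsilon}$ in $L_\infty(0,T;\mathrm{H})$ and $\mathbf{u}^{m,\epsilon}$ in $L_p(0,T;\mathrm{V}_p)$, together with $|\beta(\mathbf{u}^{m,\epsilon})|_{L_2(Q_T)}=O(\sqrt{\epsilon})$. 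Fixing $\epsilon$ and letting $m\to\infty$, the compactness analysis of subsection 3.5 applies verbatim (the restriction $p\geq 11/5$ controlling the convective term is exactly that of subsection 3.6), producing a limit $(\mathbf{u}^\epsilon,\rho^\epsilon)$ with $\sqrt{\rho^{m,\epsilon}}\mathbf{u}^{m,\epsilon}\to\sqrt{\rho^\epsilon}\mathbf{u}^\epsilon$ strongly in $L_2(Q_T)$; the Minty argument of subsection 3.7 identifies the weak limit of $\mathbf{T}(\mathbf{u}^{m,\epsilon})$ as $P\mathbf{T}_p(\mathbf{u}^\epsilon)$, and the Lipschitz penalty passes to the limit directly.

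To send $\epsilon\to 0$ and produce the variational inequality, I test the $\epsilon$-level equation against $\mathbf{v}-\mathbf{u}^\epsilon$ with an admissible $\mathbf{v}$ (namely $\mathbf{v}(t)\in\mathrm{K}$ and $\partial_t\mathbf{v}\in L_{p'}(0,T;L_{p'})$). Monotonicity of $\beta$ together with $\beta(\mathbf{v})=0$ gives
\[
\tfrac{1}{\epsilon}(\beta(\mathbf{u}^\epsilon),\mathbf{v}-\mathbf{u}^\epsilon)=\tfrac{1}{\epsilon}(\beta(\mathbf{u}^\epsilon)-\beta(\mathbf{v}),\mathbf{v}-\mathbf{u}^\epsilon)\leq 0,
\]
so dropping the penalty term yields an inequality of the correct sign. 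The algebraic manipulation already sketched just before the theorem, in (\ref{eq:v3})--(\ref{eq:v4}), shifts the time derivative from $\mathbf{u}^\epsilon$ onto $\mathbf{v}$ via the continuity equation; the required integration by parts is justified by the regularization argument of Proposition (\ref{eq:20a}), and produces (\ref{eq:v2}) with the boundary contribution $-|\sqrt{\rho_0}(\mathbf{v}(0)-\mathbf{u}_0)|_2^2$. The $O(\sqrt{\epsilon})$ bound forces $\beta(\mathbf{u}^\epsilon)\to 0$ in $L_2(Q_T)$, which ensures $\mathbf{u}(t)\in\mathrm{K}$ a.e., and the remaining passages to the limit rely on the same weak convergences plus a further application of Minty monotonicity for the stress.

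The principal obstacle lies in identifying the nonlinear stress tensor at the variational inequality level. As the authors remark immediately before the theorem, $\{(\rho^m\mathbf{u}^m,\mathbf{u}^m)\}$ is not known to be compact in $C([0,T])$, so the energy equality (\ref{eq:25a}) that powered the Minty identification in subsection 3.7 cannot be recovered for $(\mathbf{u},\rho)$; this is the very reason the terminal-time term $|\sqrt{\rho(T)}(\mathbf{v}(T)-\mathbf{u}(T))|_2^2$ has to be dropped in (\ref{eq:v2}). The remedy is to perform the Minty identification at the $\epsilon$-level, where the penalized problem still satisfies an energy equality, and only afterwards to let $\epsilon\to 0$; passing the two limits in the opposite order (or jointly) would obstruct the identification of $\mathbf{T}_p(\mathbf{u})$.
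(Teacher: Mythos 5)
Your overall architecture (semi-Faedo-Galerkin plus penalization of $\mathrm{K}$, energy estimate giving $|\beta(\mathbf{u})|_{L_2(Q_T)}=O(\sqrt{\epsilon})$, dropping the penalty term by monotonicity when testing against $\mathbf{v}-\mathbf{u}$, Minty for the stress) matches the paper's, but there is a genuine gap at the point where the penalty parameter degenerates. The paper couples the penalty to the Galerkin index ($1/\epsilon=m$) and takes a single limit; you decouple them and take $m\to\infty$ first, then $\epsilon\to 0$. The first limit is indeed unproblematic, since for fixed $\epsilon$ the penalty is a Lipschitz perturbation and the time-derivative estimate of subsection 3.5 survives. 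The problem is the second limit: to pass to the limit in the convective term $\int(\rho^{\epsilon}u_i^{\epsilon}u_j^{\epsilon})\,\partial_j(v_i-u_i^{\epsilon})$ and to run Minty once more, you need $\sqrt{\rho^{\epsilon}}\mathbf{u}^{\epsilon}\to\sqrt{\rho}\,\mathbf{u}$ strongly in $L_2(Q_T)$, hence compactness in time of $\rho^{\epsilon}\mathbf{u}^{\epsilon}$ \emph{uniformly in} $\epsilon$. But $\partial_t(\rho^{\epsilon}\mathbf{u}^{\epsilon})$ contains $\frac{1}{\epsilon}\beta(\mathbf{u}^{\epsilon})$, which your own estimate only bounds by $O(\epsilon^{-1/2})$ in $L_2(Q_T)$; no uniform time-derivative bound is available, and "the same weak convergences" do not give convergence of the quadratic term. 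Your closing remark locates the obstacle in the order of the limits, but the obstruction reappears in your order as well.

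The missing ingredient is precisely what the paper supplies in subsection 5.3: a time-translation (Steklov average) estimate
$\int_0^{T-h}|\sqrt{\rho(t+h)}(\mathbf{u}(t+h)-\mathbf{u}(t))|_2^2\,dt\leq c\,h^{\alpha}$ with a constant independent of the penalty parameter, obtained by testing the momentum equation with $\mathbf{u}_h(t)=\frac{1}{h}\int_{t-h}^{t}\mathbf{u}(s)\,ds$. The crucial term is the penalty contribution $I_4$: it is controlled without any smallness of $1/\epsilon$ (resp. $m$) because $\frac{1}{h}\int_{t-h}^{t}\mathrm{P}_{\mathrm{K}}\mathbf{u}(s)\,ds\in\mathrm{K}$ by convexity, so the dangerous part has a sign, and what remains is bounded via $\frac{1}{\epsilon}\int|\beta(\mathbf{u})|_2^2\leq c$. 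This estimate, combined with the $L_p$-Ascoli-Arzel\`a theorem, is what yields the strong convergence of $\rho\mathbf{u}$ needed for the convective term and for the final Minty step (where one must also carry the nonnegative limit $c(s)$ of $\frac{1}{\epsilon}\int_0^s(\beta(\mathbf{u}),\mathbf{u})\,dt$ through the argument, as the paper does in (\ref{eq:v14})--(\ref{eq:v15})). Without this device your $\epsilon\to 0$ passage does not close.
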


\begin{proof}
To  study  the  existence of  weak  solutions to problem (\ref{eq:v2})
we use  the  compactness method  and  penalization  argument. First, we introduce  the  penalty operator  $\beta(\cdot)$ related  to $\mathrm{K}$.

\noindent
Let   $\mathrm{P}_{\mathrm{K}}:\mathrm{V}\rightarrow \mathrm{K}$  be the  projection  operator on the  convex  $\mathrm{K} \in L^2(\Omega)$.

    \noindent
Clearly, $\mathrm{P}_{\mathrm{K}}\mathrm{V}\subset\mathrm{K}$  and $(\mathbf{v}- \mathrm{P}_{\mathrm{K}}\mathbf{v}, \mathbf{u})\geq 0$ $\forall \mathbf{u}\in \mathrm{K}.$

  We  define $\beta$  as
$$  \beta: \mathrm{V}\rightarrow  (\mathrm{I}-\mathrm{P}_{\mathrm{K}})\mathrm{V},$$
and
$$ \beta(\mathbf{v})=\mathbf{v}-\mathrm{P}_{\mathrm{K}}\mathbf{v},  \;\forall \mathbf{v}\in\mathrm{V}.$$
$\beta$  is  a monotone and hemi-continuous operator;  moreover, the  following  relations  hold  true:
\begin{align*}
& (\beta(\mathbf{v}), \mathrm{P}_{\mathrm{K}}\mathbf{v})\geq 0,\\ \nonumber
& (\beta(\mathbf{v}), \mathrm{P}_{\mathrm{K}}\mathbf{v}-\mathbf{z})\geq 0, \;  \forall  \mathbf{z}\in\mathrm{K}.
\\ \nonumber
\end{align*}

\subsection{Semi-Faedo-Galerkin  approximation}

We  choose  a basis  as  in  section 3, and we  treat  the  system (\ref{eq:v2}) including the  penalty  term,  namely

\begin{align}
&\mathbf{ u}^m = \sum_{i=1}^mc^m_i(t)\mathbf{ w}_i(x)\in C(0,T;\mathrm{W}_{(m)})  ,\label{eq:v5} \\ \nonumber
&\rho^m \partial_t \mathbf{ u}^m + \rho^m \mathbf{ u}^m\cdot\nabla \mathbf{ u}^m -
 \nabla \cdot\mathbf{ T}\mathbf{ u}^m + \\ \nonumber
&m\beta( \mathbf{ u}^m)+ \nabla \pi^m-\rho^m \mathbf{f}=0, \quad i=1,...,m, \\ \nonumber
&\partial_t \rho^m +\mathbf{ u^m}\cdot \nabla \rho^m=0,\\ \nonumber
&\mathbf{ u}^m(0)= \mathbf{ u}_0^m, \quad \mathbf{u}_{\Gamma}=0,
\;\rho^m(0)= \rho^m_0.\\ \nonumber
\end{align}

The  local existence of  (\ref{eq:v5})  is  proved in  subsection 3.3.
The  global existence is  a  consequence  of  the estimates in subsection 3.5  and  of
\begin{align}
&{m}\int_0^T(\beta(\mathbf{ u^m}(t)),\mathbf{u^m}(t))dt= \label{eq:v6}\\ \nonumber
&{m}\int_0^T(\beta(\mathbf{u^m}(t)),\beta(\mathbf{u^m}(t))dt+
{m}\int_0^T(\beta(\mathbf{u^m}(t)),\mathrm{P}_{\mathrm{K}}\mathbf{u^m}(t))dt\leq c .\\ \nonumber
\end{align}

Moreover,   follows
$$\beta(\mathbf{u}^m) \rightarrow 0 \; \text { strongly  in}\;  L^2(Q_T).$$

Now,  we consider  the    estimate  that  permits  to apply the  compactness method  to variational inequalities.
\subsection{Time-translation estimate}

We   use   the  time-translation estimate instead    of    the  time-derivative
 estimate, in other words, we appeal to  the $L_p$- Ascoli-Arzel$\grave{a}$ theorem.

 Indeed,    we  will  prove
\begin{align}
&\int_0^{T-h}|\sqrt{\rho^m(t+h)}(\mathbf{u}^m(t+h)-\mathbf{u}^m(t))|_2^2dt\leq  c h^{\alpha},
\label{eq:v7}\\  \nonumber
\end{align}
$\forall h \in\mathbb{R}$  with $0<h<T$ and  $\alpha >0$.

Let $\mathbf{u}_h^m(t)  =\frac{1}{h}\int_{t-h}^{t}\mathbf{u}^m(s)ds$  be a  test  function.
   Multiplying $(\ref{eq:v5})_2$  by  $\mathbf{u}_h^m(t) $ and,  after  integration, we get
\begin{align}
&\int_h^T((\partial_t\rho^m \mathbf{ u}^m(t),\mathbf{u}_h^m(t)) +
((\rho^m \mathbf{ u}^m(t),\mathbf{u}^m(t)\cdot\nabla\mathbf{u}_h^m(t))+\label{eq;v8}\\ \nonumber
& m(\beta(\mathbf{u}^m(t)),\mathbf{u}^m_h(t))-(\nabla \cdot\mathbf{T}\mathbf{u}^m(t),\mathbf{u}_h^m(t)) -
 (\rho^m(t)\mathbf{f}(t),\mathbf{u}_h^m(t)))dt=\\ \nonumber
 &I_1+I_2+I_3+I_4+I_5=0. \\ \nonumber
\end{align}
Now,  we  estimate $I_i$ with  $i=1,2,3,4,5$.
\begin{align*}
   &I_1=\int_h^T(\partial_t\rho^m\mathbf{ u}^m(t),\mathbf{u}_h^m(t)) dt=
   (\rho^m(T) \mathrm{ u}^m(T),\mathbf{u}_h(T))-\\ \nonumber
   &(\rho^m(h) \mathbf{ u}^m(h),\mathbf{u}_h(h))-
   \frac{1}{h}\int_h^T(\rho^m(t) \mathbf{ u}^m(t),\mathbf{u}^m(t)- \mathbf{ u}^m(t-h))dt\leq\\ \nonumber
   &c\frac{1}{\sqrt{h}}(\int_0^T|\mathbf{ u}^m(t)|^2_2dt)^{1/2}-
   \frac{1}{h}\int_h^T(\rho^m(t) \mathbf{ u}^m(t),\mathbf{u}^m(t)- \mathbf{ u}^m(t-h))dt.\\  \nonumber
\end{align*}

 We analyze the  last  term  in  $I_1$.
\begin{align*}
&-\frac{1}{h}\int_h^T(\rho^m(t) \mathbf{ u}^m(t),\mathbf{u}^m(t)- \mathbf{ u}^m(t-h))dt=\\
&-\frac{1}{h}\int_h^T(\rho^m(t) \mathbf{ u}^m(t),\mathbf{ u}^m(t))dt +\frac{1}{h}\int_h^T(\rho^m(t) \mathbf{ u}^m(t),\mathbf{ u}^m(t-h))dt=\\
&-\frac{1}{2h}\int_h^T|\sqrt{\rho^m(t) }\mathbf{ u}^m(t)|_2^2dt + \frac{1}{2h}\int_h^T|\sqrt{\rho^m(t) }\mathbf{ u}^m(t-h)|_2^2dt- \\
&\frac{1}{2h}\int_h^T|\sqrt{\rho^m(t) }(\mathbf{ u}^m(t)-\mathbf{ u}^m(t-h))|_2^2dt=\\
&-\frac{1}{2h}\int_h^T|\sqrt{\rho^m(t) }\mathbf{ u}^m(t)|_2^2dt + \frac{1}{2h}\int_h^T|\sqrt{\rho^m(t-h) }\mathbf{ u}^m(t-h)|_2^2dt+\\
& \frac{1}{2h}\int_h^T((\rho^m(t)-\rho^m(t-h))\mathbf{u}^m(t-h) ,\mathbf{ u}^m(t-h))dt-\\
&\frac{1}{2h}\int_h^T|\sqrt{\rho^m(t) }(\mathbf{ u}^m(t)-\mathbf{ u}^m(t-h))|_2^2dt.\\
\end{align*}
Thanks  to  the  continuity  equation, we  get
\begin{align*}
&|\frac{1}{2h}\int_h^T((\rho^m(t)-\rho^m(t-h))\mathbf{u}^m(t-h) ,\mathbf{ u}^m(t-h))dt|=\\
&|\frac{1}{h}\int_h^T(\int_{t-h}^t\rho(s)\mathbf{u}^m(s)ds\cdot\nabla\mathbf{u}^m(t-h) ,\mathbf{ u}^m(t-h))dt|\leq\\
&c\frac{1}{\sqrt{h}}(\int_h^T\|\mathbf{u}^m(t)\|^2dt)^{3/2}.\\
\end{align*}
 In  conclusion,
 \begin{align*}
 &I_1=\int_h^T(\partial_t\rho^m\mathbf{u}^m(t),\mathbf{u}_h^m(t)) dt\leq c_1 +\frac{c_2}{\sqrt{h}}-\\
&\frac{1}{h}\int_h^T|\sqrt{\rho^m(t)}(\mathbf{u}^m(t- h )-\mathbf{u}^m(t))|_2^2dt.\\
\end{align*}
As  before,
\begin{align*}
&I_2=\int_h^T|(\rho^m \mathbf{ u}^m(t) \mathbf{u}^m(t),\nabla\mathbf{u}_h^m(t))|dt\leq\\ \nonumber
&c\int_h^T|\mathbf{u}^m(t)|_4|\mathbf{u}^m(t)|_4\|\mathbf{u}_h^m(t)\|dt\leq \\ \nonumber
&\frac{c}{\sqrt{h}} \int_h^T\|\mathbf{ u}^m(t)\|^2_2dt(\int_h^T\|\mathbf{u}_h^m(t)dt)\|^2)^{1/2}\leq \frac{c}{\sqrt{h}};\\ \nonumber
&I_3=\int_h^T(\nabla \cdot\mathbf{ T}\mathbf{u}^m(t), \mathbf{u}_h^m(t))dt
\leq \frac{c}{{h^{\gamma}}} \int_0^T\|\mathbf{u}^m(t)\|_p^pdt,\; \gamma >0.\\ \nonumber
\end{align*}
Now, we estimate the  crucial term $I_4$.
\begin{align*}
&I_4=m\int_h^T(\mathbf{u}^m(t)-\mathrm{P}_{\mathrm{K}}\mathbf{u}^m(t),\mathbf{u}_h^m(t))dt=\\ \nonumber
&m\int_h^T(\mathbf{u}^m(t)-\mathrm{P}_{\mathrm{K}}\mathbf{u}^m(t),\frac{1}{h}\int_{t-h}^t(\mathbf{u}^m(s)-
\mathrm{P}_{\mathrm{K}}\mathbf{u}^m(s)ds)dt+\\ \nonumber
&m\int_h^T(\mathbf{u}^m(t)-\mathrm{P}_{\mathrm{K}}\mathbf{u}^m(t),\frac{1}{h}\int_{t-h}^t(\mathrm{P}_{\mathrm{K}}\mathbf{u}^m(s)ds-
\mathrm{P}_{\mathrm{K}}\mathbf{u}^m(t))dt+\\ \nonumber
&m\int_h^T(\mathbf{u}^m(t)-\mathrm{P}_{\mathrm{K}}\mathbf{u}^m(t),\mathrm{P}_{\mathrm{K}}\mathbf{u}^m(t))dt.\\ \nonumber
\end{align*}
Since $\frac{1}{h}\int_{t-h}^t\mathrm{P}_{\mathrm{K}}\mathbf{u}^m(s)ds\in \mathrm{K}$ and   bearing   in
mind  the   properties   of  $\beta(\mathbf{u}^m )$,
it follows that, in  the right hand  side,
the  second term  is  negative and  the last  term is bounded.
Finally,
$$m\int_h^T(\mathbf{u}^m(t)-\mathrm{P}_{\mathrm{K}}\mathbf{u}^m(t),\frac{1}{h}\int_{t-h}^t(\mathbf{u}^m(s)-
\mathrm{P}_{\mathrm{K}}\mathbf{u}^m(s)ds)dt\leq$$
$$ \frac{m}{\sqrt{h}}\int_h^T|\mathbf{u}^m(t)-
\mathrm{P}_{\mathrm{K}}\mathbf{u}^m(t)
|\int^t_{t-h}|\mathbf{u}^m(s)-\mathrm{P}_{\mathrm{K}}\mathbf{u}^m(s)|^2_2ds)^{1/2} dt\leq$$
$$
 c\frac{m}{\sqrt{h}}\int_0^T|\beta(\mathbf{u}^m(t)|^2_2dt\leq c\frac{1}{\sqrt{h}}.$$

In  conclusion,

$$I_4 =m\int_h^T(\mathbf{u}^m(t)-\mathrm{P}_{\mathrm{K}}\mathbf{u}^m(t),\mathbf{u}_h^m(t))dt\leq c+
 \frac{c}{\sqrt{h}}.$$
Finally,
$$ I_5=\int_h^T(\rho^m \mathbf{f}(t),\mathbf{u}_h^m(t))dt\leq\int_0^T|\rho^m \mathbf{f}(t)|_2|\mathbf{u}_h^m(t)|_2dt\leq
c  \frac{1}{\sqrt{h}}.$$
Thanks to the  previous estimates, it follows
\begin{align}
&  \frac{1}{h}\int_0^{T-h}|\sqrt{\rho^m(t+h)}(\mathbf{u}^m(t+h)-\mathbf{u}^m(t))|_2^2dt\leq
 c h^{\alpha}.
\label{eq:v10}\\ \nonumber
\end{align}
(\ref{eq:v7}) is proved.

Since $\rho^m\in L_{\infty}(Q_T)$    uniformly   in   $m$,  thus
 \begin{align}
&  \int_0^{T-h}|\rho^m(t+h)(\mathbf{u}^m(t+h)-\mathbf{u}^m(t) )|_2^2dt\leq c h^{\alpha},
\label{eq:v11} \\ \nonumber
\end{align}
with $\alpha <1$.

Moreover, the  continuity  equation and  the energy  estimate imply
\begin{align}
& \|\rho^m(t+h)-\rho^m(t)\|_{H^{-1}(\Omega)}\leq c h.
\label{eq:v12}\\  \nonumber
\end{align}
Since $\mathrm{H}^{-1}(\Omega)\times \mathrm{H}^1_0(\Omega)\hookrightarrow  \mathrm{ W}^{-1}_r(\Omega)$  with   $r<3/2$,
plainly  follows
 \begin{align}
& \int_0^{T-h}\|\rho^m(t+h)\mathbf{u}^m(t+h)-\rho(t)\mathbf{u}^m(t)\|_{\mathrm{W}^{-1}_r(\Omega)}\leq c h^{\alpha},
\label{eq:v13}\\ \nonumber
\end{align}
so $\{\rho^m\mathbf{u^m}\}$ is   a  compact  set   in  $L_2(0,T;\mathrm{H}^{-1}(\Omega))$.
In  virtue of  the  estimates  in  subsection 3.5, we  have
$$\rho^m\mathbf{u}^m  \rightarrow \rho\mathbf{u}
\;  \text{strongly   in} \;  L_2(0,T;\mathbf{H}^{-1}(\Omega)).$$

  But  $\mathbf{u}^m\rightharpoonup \mathbf{u} $ weakly    in   $\mathrm{L}_p(0,T;V^1_p(\Omega))$,  it  follows
  $$\rho^mu^m_iu^m_j\rightarrow \rho u_iu_j$$
  in  the  sense  of  distributions.

  As  in  section 3,  it  remains  to  prove   $$\mathbf{\chi} =  P\mathbf{T}_p\mathbf{u}.$$
We  set  $c_{m}(s)=m\int_0^s (\beta(\mathbf{u}^m(t)),\mathbf{u}^m(t))dt \in \mathbb{R}$, thanks  to  the  Bolzano-Weirstrass
  theorem, $c_m(s)\rightarrow  c(s) \geq 0$  as $m\rightarrow \infty$ for  fixed $s$.
We use  the  relation
 \begin{align}
  &\frac{1}{2}|\sqrt{\rho}\mathbf{u}(s)|^2 +  c(s) +\int_0^s(\mathbf{\chi}(t),\mathbf{u}(t))dt=
 \int_0^s(\rho \mathbf{f}(t),\mathbf{u}(t))dt+\frac{1}{2}|\sqrt{\rho}\mathbf{u}_0|^2.
 \label{eq:v14}\\ \nonumber
 \end{align}

Now, we  introduce,  for  $\mathbf{\phi}\in L_p(0,T;V^1_p(\Omega))$,
$$X^{s}_m=\int_{0}^s(\mathbf{T}_p\mathbf{u^m}(t)-\mathbf{T}_p\mathbf{\phi}(t),\mathbf{u^m}(t)-\mathbf{\phi}(t))dt+
\frac{1}{2}|\sqrt{\rho^m}\mathbf{u}^m(s)|^2_2+c_m(s),$$
a.e. in $(0,T)$.x

Thanks to the  monotony of  the  operator  $\mathbf{T}_p$,  we  derive
\begin{align}
 &\liminf_{m  \rightarrow \infty}X^s_m\geq c(s) + \frac{1}{2}|\sqrt{\rho}\mathbf{u}(s)|^2_2.
\label{eq:v15}\\ \nonumber
\end{align}
 $(\ref{eq:v5})_2$ implies, for $m\rightarrow  +\infty$,
\begin{align*}
&X^{s}_m=\int_0^s(\rho(t)\mathbf{f}(t),\mathbf{u}^m(t))dt
+\frac{1}{2}|\sqrt{\rho^m}\mathbf{u}^m_0|^2_2- \\ \nonumber
&\int_0^s(\mathbf{T}_p\mathbf{u}^m(t),\mathbf{\phi}(t))dt-
\int_0^s(\mathbf{T}_p\mathbf{\phi}(t),\mathbf{u}^m(t)-\mathbf{\phi}(t))dt\rightarrow {X}^s,
\\ \nonumber
\end{align*}
with
$${X}^s= \int_{0}^s(\rho\mathbf{f}(t),\mathbf{u}(t))dt +
\frac{1}{2}|\sqrt{\rho}\mathbf{u}_0|^2_2- $$
$$\int_0^s(\mathbf{\chi}(t),\phi(t))dt-
\int_{t_0}^s(\mathrm{T}_p\mathbf{\phi}(t),\mathbf{u}(t)-\mathbf{\phi}(t))dt.$$
Consequently, with  (\ref{eq:v14}), we  get
$$\int_0^s(\rho\mathbf{f}(t),\mathbf{u}(t))dt +\frac{1}{2}|\sqrt{\rho}\mathbf{u}_0|^2_2-$$
$$\int_0^s(\mathbf{\chi}(t),\phi(t))dt-
\int_0^s(\mathrm{T}_p\mathbf{\phi}(t),\mathbf{u}(t)-\mathbf{\phi}(t))dt\geq c(s)+\frac{1}{2}|\sqrt{\rho}\mathbf{u}(s)|^2_2,$$

and, finally, (\ref{eq:v14})
 implies
$$\int_{0}^{s}(\mathbf{\chi}(t)-\mathbf{T}_p\mathbf{\phi}(t),\mathbf{u}(t)-\mathbf{\phi}(t))dt\geq 0, $$
 for almost  $s$.
Using the  classical  procedure, i.e. using  $\mathbf{\phi} =  \mathbf{u} +\lambda \mathbf{\psi}$  with
 arbitrary $\lambda\in \mathbb{R}$ and $\mathbf{\psi} \in L_p(0,T; V_p) $ in  the  last relation follows
$$\mathbf{\chi}=P\mathrm{T}_p\mathbf{u}.$$

\noindent
  At   this  stage  all   terms  in (\ref{eq:v5})    are convergent.  We show   that  the   functions    $(\rho,\mathbf{u})$  satisfy (\ref{eq:v2}).

First, since $\mathbf{T}_p$ is monotone,
$$(\mathbf{T}_p\mathbf{u}^m(t) , \mathbf{u}^m(t))=(\mathbf{T}_p \mathbf{u}^m(t) ),\mathbf{u}^{ m}(t) -\mathbf{u}(t))+ (\mathbf{T}_p\mathbf{u}^m(t), \mathbf{u}(t))\geq $$
$$(\mathbf{T}_p \mathbf{u}(t) ),\mathbf{u}^{ m}(t) -\mathbf{u}(t))+ (\mathbf{T}_p\mathbf{u}^m(t), \mathbf{u}(t)),$$
thus
$$\liminf_{m\rightarrow \infty}(\mathbf{T}_p \mathbf{u}^m(t),\mathbf{u}^{ m}(t) )\geq
(\mathbf{T}_p \mathbf{u}(t),\mathbf{u}(t) ).$$

 Therefore,
 $$\liminf_{m\rightarrow \infty}(\mathbf{T}_p \mathbf{u}^m(t),\mathbf{u}^{ m}(t) -\mathbf{v}(t))\geq (\mathbf{T}_p \mathbf{u}(t),\mathbf{u}(t) -\mathbf{v}(t)),$$
 for $\forall \mathbf{v}\in \mathrm{K}$.

Let   $\mathbf{v}(t)$  be   an   arbitrary regular function   such   that  $  \mathbf{v}(t)\in V^1_p(\Omega)\cap \mathrm{K}$   for  all  $t\in(0,T)$  and $\mathbf{v}(T)=0$.  Let $\mathbf{v^m(t)}\in \mathrm{W_{(m)}}$   such that

$$ \mathbf{v^m(t)} \rightarrow \mathbf{v(t)}\;  \text{strongly in} \;  V^1_p(\Omega),$$

\noindent
with   $\mathbf{v}^m(T)=0$ and   $\mathbf{v}^m(t)\in K$   for    all  $m\geq  m_0$  (  for  some finite  $ m_0$).

\noindent
Let     $ m   >\bar {m} >m_0$,  thus
$$(\beta(\mathbf{u}^m(t)),\mathbf{v}^{\bar  m}(t)-  \mathbf{u}^m(t))\leq 0.$$
By  taking   $ \mathbf{v}^{\bar  m}-\mathbf{u}^{ m}$   as  a  test   function  in the   momentum  equation   $(\ref{eq:v5})_2$, and   taking  into  account the last  inequality,  we   get
\begin{align}
&\int_0^T((\partial_t\rho^m\mathbf{ u}^m(t) +\nabla\cdot ( \rho^m \mathbf{ u}^m (t)\otimes\mathbf{ u}^m)(t),\mathbf{v}^{\bar m}(t)-\mathbf{u}^m(t))+ \label{eq:v16}\\ \nonumber
&(\mathbf{T}_p\mathbf{u}^m(t), \mathbf{v}^{\bar m}(t) -\mathbf{u}^m(t)))dt\geq
\int_0^T(\rho^m \mathbf{f}(t), \mathbf{v}^{\bar m}(t) -\mathbf{u}^m(t)))dt.\\ \nonumber
\end{align}
Using  the  procedure  to  obtain $(\ref{eq:v2})_1$, we  get
\begin{align}
&\int_0^T((\rho^m(t)\partial_t\mathbf{ v}^{\bar m}(t),\mathbf{v}^{\bar m}(t)-\mathbf{u}^m(t)) +
(\mathbf{T}_p\mathbf{u}^m(t) , \mathbf{v}^{\bar m}(t) -\mathbf{u}^m(t))+\label{eq:v17} \\ \nonumber
 & (\rho^m \mathbf{ u}^m(t)\cdot\nabla \mathbf{ u}^m(t),\mathbf{v}^{\bar m}(t)-\mathbf{u}^m(t))-
(\rho^m \mathbf{f}(t), \mathbf{v}^{\bar m}(t) -\mathbf{u}^m(t)))dt\geq \\ \nonumber
&-|\sqrt{\rho^m_0}\mathbf{u}^m_0|_2^2.\\ \nonumber
\end{align}
  Passing  to   the   limit $m\rightarrow +\infty$  in (\ref{eq:v17}),  it  is  plain  that  $(\mathbf{u}, \rho)$   is  a weak solution of the   problem
(\ref{eq:v2}).

  Theorem 5.1 is completely proved.

\end{proof}

Remark: The theorem   continues   to  hold  for  convex  $\mathrm{K}(t)$   which   depends  monotonically on  $t$, i.e.  $\mathrm{K}(t_1)  \subseteq \mathrm{K}(t_2))$  for
$t_1\leq t_2$.

The   proof   of  the   theorem  is the  same   once  we perform  the  change $\mathrm{P}_{\mathrm{K}}\rightarrow  \mathrm{P}_{\mathrm{K(t)}}$
 and   observing  that  $\frac{1}{h}\int_{t-h}^t\mathrm{P}_{\mathrm{K(s)}}\mathbf{u}^m(s)ds\in \mathrm{K(t)}$ with  $h\geq 0$.

\section{Local  Well-posedness problem}

We  recall that the  quasi-linear  differential operator  $\mathbf{T}_p(\mathbf{u},\mathbf{D})$ is defined  as
$$\mathrm{T}_p(\mathbf{u},\mathbf{D}):=\sum_{k,l=1}t_{i,j}^{k,l}(x,t) \partial_k\partial_l,$$

\noindent
in section  3.

\medskip
We  use  the  following notations.

\noindent
Set  $X_0 :=L_q(Q_T)$, $X_2 := W^2 _q(\Omega)$ and $Y_q :=W_q^{2-2/q}(\Omega)$
equipped with    the standard  norms that  we  denote $|\cdot|_q$, $|||\cdot|||_2$ and $\|\cdot\|_{(q)}$, respectively;  assume  $q >n+2$.

$\mathrm{T}_p(\mathbf{u})$  satisfies                :
$$\mathrm{T}_p: Y_q\rightarrow \mathcal{B} (X _2,X_0)$$
is   continuous ($\mathcal{B}$   stands  for  bounded  operator set)  and
$$  |\mathrm{T}_p(\mathbf{v})\mathbf{u}-\mathrm{T}_p(\bar{\mathbf{v}})\mathbf{u}|_q
\leq
c (\|\bar{\mathbf{ v}}\|_{(q)})\|\mathbf{v}-\bar{\mathbf{v}}\|_{(q)}|||\mathbf{u}|||_2,$$

 if  $\mathbf{ v},\bar{\mathbf {v}}\in  Y_q $  and  $  \mathbf{u}\in  X_2$,  $q> n+2$  and
 $c(\cdot)$  is  continuous  positive  function.

Furthermore,
 $$\mathcal{Z}:=W^1_q(0,T;L_q(\Omega))\cap L_q(0,T;W^2_q(\Omega))\hookrightarrow C(0,T; W_q^{2-2/q}(\Omega)) \hookrightarrow $$

 $C((0,T];C^1(\Omega)).$

 The   embedding  constant depends   on  $T$ and
 can blow  up as  $T\rightarrow 0_+$, in  general, if the  initial datum  is  different  from zero.
 In general,   $ W_q^{2-2/q}(\Omega)$   is  considered  as   a  time-trace  space.
Now,  we  consider
the well-posedness    of the following problem,
\begin{align}
&\rho \partial_t \mathbf{u} + \mathbf{T}_p(\mathbf{u})\mathbf{u}+\rho \mathbf{u}\cdot\nabla \mathbf{u}+
\nabla \pi =\rho \mathbf{f},\label{eq:s4}\\ \nonumber
&\partial_t\rho +\mathbf{u} \cdot  \nabla \rho= 0,\\ \nonumber
&\nabla  \cdot  \mathbf{u} =0, \;  \mathbf{u}(0)=0,  \; \rho(0)=\rho_0,
 \;\mathbf{u}=0  \; on\;  \Gamma.  \\ \nonumber
\end{align}

The  main result  of the  section   is  the  following,

\medskip
\noindent
\begin{theorem}   Let  $\Omega \subset \mathbb{R}^3$ be  a  domain  with compact  boundary  $\Gamma $
of  class $C^3$; let  $5 < q<\infty$,  and  assume  that  $\mu\in C^2(\mathbb{R}_+)$ is  such  that
$\mu(s) \geq c>0$ for  every  $s\geq 0$.
Then,  for each  $\mathbf{u}_0\in  W^{2-2/q}_q(\Omega)\cap \mathrm{V}$  and $\rho_0 \in W^1_q(\Omega)$, $\mathbf{f}\in L_q(Q_T),$
and $0 <m \leq \rho_0 \leq 1$, there exists  a  $\bar{T}>0$  such  that
there is   a  unique  solution  $(\mathbf{u},\rho,\pi)$ of (\ref {eq:s4})   on
the  time interval  $[0,\bar{T}]$  such  that
$$\mathrm{u}\in W^1_q(0,\bar{T};V^0_q(\Omega))\cap  L_q(0,\bar{T},W^2_q(\Omega)\cap V),\;\pi\in L_q(0,\bar{T};W^1_q(\Omega))/\mathbb{R}),$$
\quad $\rho\in L_{\infty}(Q_T)\cap L_{\infty}(0,T;W^1_q(\Omega)),\;
\partial_t\rho \in L_{\infty}(0,T;L_q(\Omega)).$
\end{theorem}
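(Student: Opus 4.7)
I would prove local existence and uniqueness by a Banach fixed-point argument that decouples (\ref{eq:s4}) into a transport equation for $\rho$ and a linear variable-coefficient Stokes-type equation for $\mathbf{u}$. Fixing $R$ large relative to the data, I would work on the complete metric space
\[
\mathcal{X}_{\bar T,R}:=\{\mathbf{v}\in\mathcal{Z}:\;\nabla\cdot\mathbf{v}=0,\;\mathbf{v}(0)=\mathbf{u}_0,\;\|\mathbf{v}\|_{\mathcal{Z}}\leq R\},
\]
with $\mathcal{Z}=W^1_q(0,\bar T;L_q(\Omega))\cap L_q(0,\bar T;W^2_q(\Omega)\cap V)$. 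The hypothesis $q>5=n+2$ is decisive, since it supplies $\mathcal{Z}\hookrightarrow C([0,\bar T];W^{2-2/q}_q(\Omega))\hookrightarrow C([0,\bar T];BUC^1(\bar\Omega))$, which simultaneously allows propagation of $W^1_q$-regularity of $\rho$ along the characteristics of $\bar{\mathbf{u}}\in\mathcal{X}_{\bar T,R}$ and makes $\mathbf{T}_p(\bar{\mathbf{u}})$ a uniformly strongly elliptic second-order operator with coefficients continuous in $(x,t)$.

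\textbf{The two linearized steps.} Given $\bar{\mathbf{u}}\in\mathcal{X}_{\bar T,R}$, first solve $\partial_t\rho+\bar{\mathbf{u}}\cdot\nabla\rho=0$, $\rho(0)=\rho_0$, by characteristics: since $\bar{\mathbf{u}}\in L_1(0,\bar T;L_\infty(\Omega))$, the flow exists on $[0,\bar T]$, the bounds $m\leq\rho(t,x)\leq 1$ are preserved pointwise, and the argument of subsection~3.3 adapted to $W^1_q$ yields
\[
|\nabla\rho(t)|_q\leq|\nabla\rho_0|_q\exp\Big(c\int_0^t|\nabla\bar{\mathbf{u}}(\tau)|_\infty\,d\tau\Big),
\]
whence also $\partial_t\rho=-\bar{\mathbf{u}}\cdot\nabla\rho\in L_\infty(0,\bar T;L_q(\Omega))$. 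With $\rho$ and $\bar{\mathbf{u}}$ frozen, I would then solve the linear Stokes-type problem
\[
\rho\,\partial_t\mathbf{u}+\mathbf{T}_p(\bar{\mathbf{u}})\mathbf{u}+\nabla\pi=\rho\mathbf{f}-\rho\,\bar{\mathbf{u}}\cdot\nabla\bar{\mathbf{u}},\quad\nabla\cdot\mathbf{u}=0,\;\mathbf{u}|_\Gamma=0,\;\mathbf{u}(0)=\mathbf{u}_0,
\]
by invoking $L_q$-maximum regularity for the Stokes operator with variable, continuous-in-$(x,t)$ coefficients. This produces a unique $(\mathbf{u},\nabla\pi)\in\mathcal{Z}\times L_q(Q_{\bar T})$ with
\[
\|\mathbf{u}\|_{\mathcal{Z}}+|\nabla\pi|_q\leq C(R)\big(|\mathbf{f}|_q+|\bar{\mathbf{u}}\cdot\nabla\bar{\mathbf{u}}|_q+\|\mathbf{u}_0\|_{(q)}\big),
\]
and defines the map $\Phi:\bar{\mathbf{u}}\mapsto\mathbf{u}$.

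\textbf{Self-map and contraction.} For the self-mapping property, the embedding into $BUC^1$ gives $|\bar{\mathbf{u}}\cdot\nabla\bar{\mathbf{u}}|_{L_q(Q_{\bar T})}\leq c(R)\,\bar T^{1/q}$, so the right-hand side of the last estimate is $\leq R$ once $\bar T$ is small enough. For contraction, take $\bar{\mathbf{u}}_1,\bar{\mathbf{u}}_2\in\mathcal{X}_{\bar T,R}$, produce $(\rho_i,\mathbf{u}_i)$; a standard transport estimate gives $\|\rho_1-\rho_2\|_{L_\infty(0,\bar T;L_q)}\leq c\bar T\,\|\bar{\mathbf{u}}_1-\bar{\mathbf{u}}_2\|_{\mathcal{Z}}$, and subtracting the two linearized momentum equations and applying maximum regularity to the difference, combined with the Lipschitz bound $\|\mathbf{T}_p(\bar{\mathbf{u}}_1)-\mathbf{T}_p(\bar{\mathbf{u}}_2)\|_{\mathcal{B}(X_2,X_0)}\leq c(R)\|\bar{\mathbf{u}}_1-\bar{\mathbf{u}}_2\|_{(q)}$ recalled at the start of this section, yields
\[
\|\mathbf{u}_1-\mathbf{u}_2\|_{\mathcal{Z}}\leq C(R)\,\omega(\bar T)\,\|\bar{\mathbf{u}}_1-\bar{\mathbf{u}}_2\|_{\mathcal{Z}},
\]
with $\omega(\bar T)\to 0$ as $\bar T\to 0^+$. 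Choosing $\bar T$ so that $C(R)\omega(\bar T)<1$ delivers the contraction, and Banach's theorem produces the unique fixed point, hence the unique solution asserted.

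\textbf{Main obstacle.} The heart of the argument is establishing the $L_q$-maximum regularity estimate for the linearized Stokes-type system with the non-constant, merely continuous-in-$(x,t)$ coefficient matrix $\bigl(t_{ij}^{k\ell}(\bar{\mathbf{u}})\bigr)$ multiplying $\partial_k\partial_\ell\mathbf{u}$ together with the merely bounded density factor $\rho$ on $\partial_t\mathbf{u}$, and with constants uniform over $\mathcal{X}_{\bar T,R}$. One obtains it by freezing coefficients at a base point, invoking the classical constant-coefficient Stokes $L_q$-theory, and patching via a localization/perturbation scheme whose error is controlled by the uniform continuity modulus of the coefficients in $(x,t)$ times the length of the time interval; the smallness of $\bar T$ then absorbs both this perturbation and the lower-order convective term $\rho\,\bar{\mathbf{u}}\cdot\nabla\mathbf{u}$ (which I moved to the right-hand side above).
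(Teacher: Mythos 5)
Your overall architecture (decouple the transport equation from a linearized Stokes-type problem, run a fixed point on a ball of $\mathcal{Z}$) is the same as the paper's, but two of your steps would fail as written.

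First, you keep the variable coefficient $\rho$ on $\partial_t\mathbf{u}$ in the linearized problem and invoke ``$L_q$-maximal regularity for the Stokes operator with variable, continuous-in-$(x,t)$ coefficients.'' The maximal-regularity result actually available (the paper's Theorem 6.2, drawn from \cite{rif3}, \cite{rif36}, \cite{rif37}) has coefficient $1$ on $\partial_t\mathbf{u}$. One cannot simply divide the equation by $\rho$, because $\nabla\pi/\rho$ is no longer a gradient, so the Helmholtz projection no longer eliminates the pressure and the Stokes structure is destroyed; your proposed freezing/localization fix would have to be built from scratch and is not routine precisely because of the pressure. The paper sidesteps this entirely: it writes $\rho\partial_t\mathbf{u}=\partial_t\mathbf{u}-(1-\rho)\partial_t\mathbf{u}$ and puts $(1-\rho)\partial_t\mathbf{v}$ on the right-hand side of the linearization (\ref{eq:s5}), absorbing it through the explicit smallness condition $2\bar c(1-\rho_0)<1$. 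This is the real reason the hypothesis $\rho_0\le 1$ (density close to $1$) appears in the statement; your argument never uses it, which is a symptom of the gap.

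Second, the contraction in the strong norm $\mathcal{Z}$ does not close. In the difference of the two linearized momentum equations you must place $(\rho_1-\rho_2)\partial_t\mathbf{u}_2$ and $(\rho_1-\rho_2)\mathbf{f}$ in $L_q(Q_{\bar T})$; since $\partial_t\mathbf{u}_2$ and $\mathbf{f}$ are only $L_q$ in time, this forces $\rho_1-\rho_2\in L_\infty(Q_{\bar T})$, i.e.\ an $L_\infty(0,\bar T;W^1_q(\Omega))$ bound on the density difference. Differentiating the transport equation satisfied by $\rho_1-\rho_2$ produces the term $(\bar{\mathbf{u}}_1-\bar{\mathbf{u}}_2)\cdot\nabla\nabla\rho_2$, and $\nabla\nabla\rho_2$ is not controlled ($\rho_0$, hence $\rho_2$, is only $W^1_q$): this is the classical loss of one derivative for transport-coupled systems. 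The paper avoids a Banach contraction for exactly this reason: it notes that the ball $\mathrm{B}$ is compact in $L_2(Q_{\bar T})$, proves only that the map $G$ is continuous in the $L_2(Q_{\bar T})$ topology (via energy estimates on the differences, where the bound $|\tilde\rho^n|_2$ suffices), concludes existence by a Schauder-type fixed point, and then establishes uniqueness by a separate Gronwall/energy argument rather than from uniqueness of the fixed point. Your proposal would need to be restructured along one of these lines, or else supply a genuinely new maximal-regularity theorem with variable density in front of $\partial_t\mathbf{u}$ and a contraction in a topology one derivative weaker than $\mathcal{Z}$.
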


We  prove  theorem 6.1 assuming $\mathbf{u}_0=0$ for  simplicity  of  exposition.  The  non-homogeneous  case  is treated using  a translation.
Now, we  recall some  results concerning the solvability of the generalized
Stokes equations with variable coefficients. In \cite{rif3}, \cite{rif36},\cite{rif37} is considered the following problem
\begin{align}
&\partial_t \mathbf{u} +  \mathcal{A}(x,t,\frac{\partial}{\partial x}) \mathbf{u}(t)+\nabla \pi =  \mathbf{f},\label{eq:s0}\\ \nonumber
&\nabla  \cdot   \mathbf{u} =0, \;   \mathbf{u}(0)= \mathbf{u}_0,  \; \mathbf{u}=0  \; on\;  \Gamma, \\ \nonumber
  \end{align}

\noindent
where  $\mathcal{A}(x,t,D)=\sum_{k,l=1}^3\mathbf{A}^{k,l}(x,t)\partial_k\partial_l$ is a matrix   elliptic-type differential  operator and  $\mathbf{A}^{k,l}(x,t)$ are  regular  functions
 with real coefficients depending on $x,t$.
 Assuming the coefficients of the operator $\mathcal{A}$ are bounded, and the coefficients
of the operator $\mathbf{A}_0$ (principal  part  of $\mathcal{A}(x,t,D)$) are continuous with respect to $(x,t)$  and belong to $ W^1_r(\Omega)$, $ 1/r < 1/n+
min(1/q; (q-1)/q;1/n)$, for all $t$,  $\mathbf{f}\in  L_q(Q_T)$ , $\mathbf{u}_0 \in W^{2-2/q}_q(\Omega)$,
the  following theorem is  proved.

\begin{theorem}   Let $\Gamma\in  C^3$,  $\mathbf{f}\in L_q(\Omega)$ , $\mathbf{u}_0 \in W_q^{2-2/q} (\Omega)\cap V$.
 Then  there  exists  a   unique solution  $(\mathbf{u},\pi)$ of (\ref{eq:s0})  such  that

 $$u\in W^1_q(0,T;V)\cap L_q(0,T;W^2_q(\Omega)\cap V),  \;\pi\in L_q(0,T;  W^1_q(\Omega)/\mathbb{R})$$

 such  that
 $$\|\mathbf{u}\|_{W^{2,1}_q(Q_T)}^q +\|\nabla \pi\|_{L_q(Q_T)}^q   \leq \bar{c}(\|\mathbf{f}\|^q_{L_q(Q_T)}+
 \|\mathbf{u}_0\|^q_{(q )}).$$
\end{theorem}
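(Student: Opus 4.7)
The strategy is to establish maximal $L_q$-regularity for the linear system \eqref{eq:s0} by perturbation from the constant-coefficient Stokes problem, whose $L_q$-theory (Solonnikov, Giga--Sohr) serves as the baseline. A first reduction is to homogeneous initial data: since $W^{2-2/q}_q(\Omega)$ is the time-trace space of $\mathcal{Z}$, I extend $\mathbf{u}_0$ to $\mathbf{U}\in\mathcal{Z}\cap L_q(0,T;V)$ with $\mathbf{U}(0)=\mathbf{u}_0$ and $\|\mathbf{U}\|_{\mathcal{Z}}\leq c\|\mathbf{u}_0\|_{(q)}$; setting $\mathbf{v}=\mathbf{u}-\mathbf{U}$ yields the same problem with $\mathbf{v}(0)=0$ and right-hand side still in $L_q(Q_T)$.

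Next I would localize using a $C^\infty$ partition of unity $\{\zeta_k\}$ subordinate to a finite cover of $\bar\Omega$ by balls $B_k$ of small radius $\delta$. Interior balls produce whole-space problems, while boundary balls, after flattening $\Gamma$ (which needs $C^2$ and we have $C^3$), produce half-space problems with Dirichlet condition. Within each ball the principal coefficients $\mathbf{A}^{k,l}$ are close to their values at the centre: the oscillation is controlled by $\delta$ and $\|\nabla\mathbf{A}^{k,l}\|_{L_r}$ via Sobolev--Poincar\'e, and the hypothesis $1/r<1/n+\min(1/q,(q-1)/q,1/n)$ is precisely what is needed so that this modulus is small and the perturbation $(\mathbf{A}^{k,l}-\mathbf{A}^{k,l}(x_k))\partial_k\partial_l$ is subordinated to the principal part in the $L_q$-scale. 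Freezing the coefficients at $x_k$ and applying constant-coefficient maximal regularity gives
\[
\|\mathbf{v}_k\|_{W^{2,1}_q}+\|\nabla\pi_k\|_{L_q}\leq c\|\mathbf{f}_k\|_{L_q}+c\,\omega(\delta)\|\mathbf{v}_k\|_{W^{2,1}_q}+(\text{lower order}),
\]
with the second term absorbed on the left for $\delta$ small, and the lower-order and commutator terms handled by interpolation inequalities of the type $\varepsilon\|\mathbf{v}\|_{W^2_q}+c_\varepsilon\|\mathbf{v}\|_{L_q}$, closed up via Gronwall in $t$.

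A delicate step in the localization is that $\zeta_k\mathbf{v}$ is not divergence free. This is repaired by subtracting a Bogovskii corrector $\mathbf{w}_k$ solving $\nabla\cdot\mathbf{w}_k=-(\nabla\zeta_k)\cdot\mathbf{v}$ in $B_k\cap\Omega$ with zero boundary value and $\|\mathbf{w}_k\|_{W^2_q}+\|\partial_t\mathbf{w}_k\|_{L_q}\leq c(\|\mathbf{v}\|_{W^1_q}+\|\partial_t\mathbf{v}\|_{L_q})$; these are lower-order contributions and can be absorbed. Summing over $k$ produces the global a priori estimate
\[
\|\mathbf{u}\|^q_{W^{2,1}_q(Q_T)}+\|\nabla\pi\|^q_{L_q(Q_T)}\leq\bar c\bigl(\|\mathbf{f}\|^q_{L_q(Q_T)}+\|\mathbf{u}_0\|^q_{(q)}\bigr).
\]

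Existence then follows by the method of continuity along the homotopy $\mathcal{A}_\tau=(1-\tau)(-\Delta)+\tau\mathcal{A}$, $\tau\in[0,1]$: at $\tau=0$ the problem reduces to the classical Stokes system, which is solvable, and the a priori estimate is uniform in $\tau$ since its constants depend only on the ellipticity bound and the $W^1_r$-norm of the coefficients. The set of $\tau\in[0,1]$ for which the problem is solvable is therefore both open and closed, hence the full interval. Uniqueness is immediate from the same linear estimate applied to the difference of two solutions. The main technical obstacle I expect is the third step: simultaneously producing the Bogovskii corrector \emph{and} recovering the pressure gradient so that both survive localization in $L_q$, since it is here that the interplay between $\Gamma\in C^3$, the precise lower bound on $r$, and the inhomogeneous divergence equation is most delicate.
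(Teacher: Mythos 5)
Your plan is essentially the same approach the paper relies on: the paper does not prove this theorem itself but quotes it from \cite{rif3}, \cite{rif36}, \cite{rif37} and summarizes the method in one line as ``frozen techniques, Schauder's type estimates and the construction of a regularizator,'' and your localization, coefficient-freezing, Bogovskii correction of the divergence, and absorption of lower-order terms is precisely the standard implementation of that scheme. The only real divergence is at the existence step, where you invoke the method of continuity along the homotopy from $-\Delta$ to $\mathcal{A}$, whereas Solonnikov's route assembles the local frozen-coefficient solution operators into a global parametrix (the ``regularizator'') $R$ with $\mathcal{A}R = I + T$, $T$ small for short time, and inverts by Neumann series; both are legitimate ways to convert the a priori estimate into solvability, and neither introduces a gap. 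One small imprecision: the smallness of the coefficient oscillation on a ball of radius $\delta$ comes directly from the assumed uniform continuity of the principal coefficients on $\bar{Q}_T$, while the $W^1_r$ condition with $1/r<1/n+\min(1/q,(q-1)/q,1/n)$ is what makes the commutator and lower-order terms (products of $\nabla\mathbf{A}^{k,l}$ with $\nabla\mathbf{v}$) land in $L_q$ via H\"older and Sobolev embedding; conflating the two roles does not affect the correctness of the argument.
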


The method of the  proof  consists  of  frozen techniques, Schauder's  type  estimates and of the construction
of a "regularizator ".

If the  coefficients $\mathbf{A}^{k,l}(x,t)$  are  constant or  perturbation  of  a  constant, the  operator $\mathcal{A}$
 has  the  so-called  " maximal regularity "  property.

\medskip
Consequence  of the  previous theorem    is  the existence of  the following    quasi-linear problem
\begin{align}
&\partial_t \mathbf{u} +  \mathcal{A}(\mathbf{v}(t))\mathbf{u}(t)+\nabla \pi= \mathbf{f}(t),\label{eq:s1}\\ \nonumber
&\nabla\cdot \mathbf{u}=0,\;  \mathbf{u}(0)=\mathbf{u}_0, \;
\mathbf{u}=0  \; on\;  \Gamma. \\ \nonumber
 \end{align}

\noindent
with $\mathbf{v}$ and $\mathbf{u}$  belong  to  the  spaces occurring in   theorem 6.1.
\begin{proof} We  prove  theorem 6.1.
  Let us  consider  the set
  $$\mathrm{B}=\{\mathbf{\phi}|\mathbf{\phi}(0)=0; \;sup( \|\mathbf{\phi}\|_{ L_q(0,T;W^2_q(\Omega))},  \|\mathbf{\phi}\|_{W^1_q(0,T;Lq(\Omega))})\leq r\},$$
  with  $r\in\mathbb{R_+}$.

The  existence  proof of  a  solution
 of  (\ref{eq:s4}) is done   by  formulate  the  existence    as  a  fixed  point in  $\mathrm{B} $ of  the  map
 $\mathrm{G}:   \mathbf{v} \rightarrow \rho \rightarrow \mathbf{u}$  where  $\mathbf{u}$  and  $\rho$  are  solutions
 of  the following problem for arbitrary  $\mathbf{v}\in  \mathrm{B}$,
\begin{align}
& \partial_t \mathbf{u}(t) + \mathrm{T}_p( \mathbf{v}(t))\mathbf{u}(t)+\nabla \pi(t) = \label{eq:s5}\\ \nonumber
&(1-\rho(t))\partial_t \mathbf{v}(t)- \rho(t)\mathbf{v}(t)\cdot\nabla \mathbf{v}(t)+
 \rho(t)\mathbf{f}(t),
\\ \nonumber
&\partial_t\rho(t) + \mathbf{v}(t)\cdot  \nabla \rho(t)= 0, \\ \nonumber
&\nabla  \cdot  \mathbf{u}(t) =0, \;  \mathbf{u}(0)=0,  \; \rho(0)=\rho_0,  \\\ \nonumber
&0 < m \leq\rho_0\leq  1,\;
2\bar {c}(1-\rho_0)<1,\;
\mathbf{u}=0  \; on\; \Gamma. \\ \nonumber
\end{align}

Theorem 6.2, and  the  results  in  section 3    imply  the  existence and  uniqueness   of (\ref{eq:s5}).  Bearing  in   mind  that  the  operator  $\mathrm{T}_p$  is  strongly  elliptic  we  have

$$\|\mathbf{u}\|^q_{W^{2,1}_q(Q_T)} +\|\nabla \pi\|^q_{L_q(Q_T)} \leq \bar{c}(\|f\|^q_{L_q(Q_T)}+
|F(\mathbf{v},\rho)|^q_{L_q(Q_T)}).$$

\medskip
Here
$F(\mathbf{v}):=(1-\rho(t))\partial_t\mathbf{v}(t)-\rho(t) \mathbf{v}(t)\cdot \nabla  \mathbf{v}(t).$

\medskip
\noindent
Clearly, $\mathrm{B}$  is  a compact  set  in $L_2(Q_T)$ framework .  As   we  are  going  to  use  a  fixed  point  argument,
we  have  to  show $G(\mathrm{B})\subseteq \mathrm{B}$,  and  $G$  is  continuous  in   $L_2(Q_T)$, for  example.

According,   we  prove    $G(\mathrm{B})\subseteq \mathrm{B}$ for suitable  $\bar  T$.

 We  notice that
$$|\rho \mathbf{v}\cdot \nabla  \mathbf{v}|_{L_q(Q_T)}\leq c\|\mathbf{v}\|_{L_{\infty}(Q_T)}\|\mathbf{v}\|_{L_{\infty}(0,T;W^1_q(\Omega))}T^{1/q}.$$

Moreover,  thanks  to  the  assumptions,  we  get

$$|(1-\rho)\partial_t  \mathbf{v}|_q< \frac{1}{2\bar{c}}|\partial_t \mathbf{v}|_q.$$

In  conclusion,  for  suitable  $T:=\bar{T}$,  we  get
$$G\mathrm{B}\subseteq  \mathrm{B}.$$

\medskip
Now,  we  prove  the  continuity  of  $G$  in  $L_2(Q_{\bar{T}})$.

First,  we  observe  that if  $\{ \mathbf{v} ^n\}\subset \mathrm{B}$ there  exists  a  subsequence ( denoted  again $\{\mathbf{ v} ^n\}$)  such  that
as  $n\rightarrow +\infty$, $ \mathbf{v}^n\rightarrow \mathbf{ v}$ weakly in  $L_q(0, \bar{T}; W^2_q(\Omega))$,  weakly* in  $L_{\infty}(0, \bar{T};W^1_q(\Omega))$,
and  $\partial_t \mathbf{v}^n  \rightarrow   \partial_t  \mathbf{v} $ weakly  in $L_q(Q_{ {T}})$.

Let
$\{\mathbf{ v}^n\}\subset\mathrm{B}$ such  that $\mathbf{ v} ^n\rightarrow \mathbf{v} $  strongly  in  $L_2(Q_T)$.
Clearly, $\mathbf{v}\in  \mathrm{B}$ and Proposition 1   provides  strong  convergence  in the  intermediate spaces
we  need.

\medskip
Let  $\rho^n$  and  $\rho$ be  the  solutions  of
$$\partial_t \rho^n + \mathbf{v}^n \cdot\nabla \rho^n = 0,  \;  \rho^n(0)=\rho_0,$$
and
$$\partial_t \rho + \mathbf{v}\cdot \nabla\rho = 0,  \;  \rho(0)=\rho_0,$$
respectively.

The regularity of $\rho^n$  and $\rho$ is  proved in section  3.

Now,   $\tilde \rho^n = \rho^n-\rho$   satisfies
\begin{align}
&\partial_t\tilde \rho^n  + ( \mathbf{v}^n - \mathbf{v})\cdot \nabla\rho  + \mathbf{v}^n\cdot \nabla \tilde \rho^n=0,  \;  \tilde \rho^n(0)=0.
\label{eq:s6}\\ \nonumber
\end{align}
 Multiplying    (\ref{eq:s6} )  by $\tilde \rho^n$ and  after  integration  over  $Q_{{T}}$ we   have
$$|\tilde \rho^n(t)|_2^2\leq  e^{ct}\int_0^{t}|( \mathbf{v}^n- \mathbf{v})\cdot \nabla \rho|_2^2dt.$$

 This  implies    $\rho^n \rightarrow\rho$  strongly  in  $L_{\infty} (0,\bar{T};L_2(\Omega))$.
Thanks  to  $\tilde \rho^n\in L_{\infty}(Q_T)  $ it follows
$\rho^n \rightarrow\rho$  strongly  in  $L_{\infty} (0,T;L_r(\Omega))$ for  every  finite  $r>1$.

Now,  let  $\mathbf{u}^n$ and $\mathbf{u}$   be  the  solutions of
\begin{align}
& \partial_t \mathbf{u}^n +  \mathbf{T}_p(\mathbf{v}^n)\mathbf{u}^n + \nabla \pi^n=
 (1-\rho^n)\partial_t \mathbf{v}^n-\rho^n  \mathbf{v}^n\cdot \nabla \mathbf{v}^n+\rho^n f, \label{eq:s7}\\ \nonumber
&\partial_t\rho^n + \mathbf{v}^n\cdot  \nabla \rho^n= 0,\\ \nonumber
&\nabla  \cdot  \mathbf{u}^n =0, \;\mathbf{u}^n(0) =0, \; \rho^n(0)=\rho_0,  \;  \;\mathbf{u}^n=0  \;
on\;  \Gamma,\\ \nonumber
  \end{align}
and
\begin{align}
& \partial_t \mathbf{u} +   \mathbf{T}_p(\mathbf{v})\mathbf{u} + \nabla \pi=  (1-\rho)\partial_t \mathbf{v}-\rho
\mathbf{v}\cdot \nabla \mathbf{v} +\rho f,
 \label{eq:s8}\\ \nonumber
&\partial_t \rho + \mathbf{v} \cdot  \nabla \rho= 0,\\ \nonumber
&\nabla  \cdot  \mathbf{u} =0, \;  \mathbf{u}(0)=0,  \; \rho(0)=\rho_0,  \;
\mathbf{u}=0  \; on\;  \Gamma,\\ \nonumber
  \end{align}
respectively.

\noindent
Subtracting $(\ref{eq:s7})_1$ and  $(\ref{eq:s8})_1$
we  obtain
\begin{align}
& \partial_t (\mathbf{u}^n-\mathbf{u})+  (\mathbf{T}_p(\mathbf{v}^n) -
\mathbf{T}_p(\mathbf{v}))\mathbf{u}^n+\mathbf{T}_p(\mathbf{v})(\mathbf{u}^n-\mathbf{u})+ \nabla (\pi^n-\pi) =
\label{eq:s9}\\ \nonumber
&(1-\rho^n)\partial_t(\mathbf{v}^n-\mathbf{v})-
\tilde\rho^n\partial_t\mathbf{v}-\rho^n \mathbf{v}^n\cdot \nabla  \mathbf{v}^n +
\rho \mathbf{v}\cdot \nabla  \mathbf{v}+\tilde \rho \mathbf{f}.\\ \nonumber
 \end{align}
Denoting  $\mathbf{V}^n=\mathbf{v}^n-\mathbf{v}$ , $\mathbf{U}^n=\mathbf{u}^n-\mathbf{u}$,    we get

  \begin{align}
& \partial_t \mathbf{U}^n(t)+ \mathbf{T}_p(\mathbf{v})\mathbf{U}^n(t)+(\mathbf{T}_p(\mathbf{v}^n(t))-
 \mathbf{T}_p(\mathbf{v}(t)))\mathbf{u}^n(t)  +\nabla (\pi^n -\pi)=  \label{eq:s10}\\ \nonumber
 & (1-\rho^n(t))\partial_t \mathbf{V}^n(t)-
 \tilde  \rho^n(t)\partial_t \mathbf{v}(t)-\rho^n(t)\mathbf{V}^n(t)\cdot \nabla  \mathbf{v}^n(t) -\\ \nonumber
 &\tilde\rho^n(t) \mathbf{v}(t)\cdot\nabla\mathbf{v}^n(t)-
 \rho(t) \mathbf{v}(t)\cdot \nabla \mathbf{V}^n(t) +\tilde \rho^n(t) \mathbf{ f}(t).\\ \nonumber
\end{align}

  Multiplying  (\ref{eq:s10})   by  $\mathbf{U}^n(t)$  and  integrating over  $\Omega$
we  get
  \begin{align}
&d_t|\mathbf{U}^n(t)|^2_2 +\|\mathbf{U}^n(t)\|^2 \leq
 -\int_{\Omega}\tilde  \rho^n\mathbf{U}^n(t) \partial_t\mathbf{v}(t) dx +\label{eq:s11}\\ \nonumber
&\partial_t\int_{\Omega}(1-\rho^n(t))\mathbf{U}^n(t) \mathbf{V}^n(t)dx-
\int_{\Omega}\mathbf{U}^n(t)^n \mathbf{V}^n(t)\partial_t(1-\rho^n(t))dx -\\  \nonumber
&\int_{\Omega}(1-\rho^n(t))\mathbf{V}^n(t)\partial_t\mathbf{U}^n(t)dx+
|\rho^n(t)|_{\infty}|\nabla \mathbf{v}^n(t)|_2|\mathbf{V}^n(t)|_3|\mathbf{U}^n(t)|_6+\\ \nonumber
&|\rho \mathbf{v}(t)|_3|\nabla\mathbf{V}^n(t)|_2|\mathbf {U}^n(t)|_6+  |\tilde \rho^n(t)|_6| \mathbf{v}(t)|_{\infty}|\nabla\mathbf{v}^n(t)|_3|\mathbf{U}^n(t)|_2 +\\ \nonumber
&|\tilde \rho^n(t)|_2 |f(t)|_2|\mathbf{U}^n(t)|_{\infty}+ \int_{\Omega}( \mathbf{T}_p(\mathbf{v}^n(t))- \mathbf{T}_p(\mathbf{v}(t)))\mathbf{u}^n(t)\mathbf{U}^n(t)dx.\\ \nonumber
\end{align}

Integration (\ref{eq:s11}) in $t$  gives
\begin{align}
&|\mathbf{U}^n(s)|^2_2 \leq |1- \rho^n(s)|_{\infty}|\mathbf{U}^n(s)|_2 |\mathbf{V}^n(s)|_2 +\label{eq:s12}\\ \nonumber
&c\int_0^s|\mathbf{U}^n(t)|_{\infty}(|\mathbf{V}^n(t)|_2|\partial_t\rho^n(t)|_2+|\tilde{\rho}^n\partial_t\mathbf{v}(t)|_2)dt+\\
&c(r)\int_0^s(|\mathbf{V}^n(t)|_2 +|\nabla\mathbf{V}^n(t)|_2+|\tilde{\rho}^n(t)|_2)|\mathbf{U}^n(t)|_2 dt+\\ \nonumber
  &\int_0^s((\mathbf{T}_p( \mathbf{v}^n(t))- \mathbf{T}_p(\mathbf{v}(t)))\mathbf{u}^n(t),  \mathbf{U}^n(t))dt.\\ \nonumber
\end{align}

Bearing  in mind  that $\mu(\cdot)$  is a  $C^2$-function of its
argument and
$$sup(|\mathbf{v}^n|_{L_{\infty}(Q_{\bar{T}})}, | \nabla{V}^n|_{L_{\infty}(Q_{\bar{T}}) })\leq    c(r),$$
follows
$$ |( \mathbf{T}_p( \mathbf{v}^n(t))- \mathbf{T}_p(\mathbf{v}(t)))\mathbf{u}(t),\mathbf{U^n}(t)|\leq c(r)|\nabla\mathbf{V}^n(t)|_{\infty} |||\mathbf{u}(t)|||_2|\mathbf{U}^n(t)|_2.$$

Besides, by  the multiplicative  inequality ( proposition 1), it  is  routine   matter  to
prove that  $\mathbf{U}^n\rightarrow 0$ strongly in  $L_2(Q_T)$.

Consequently,   the  map $\mathrm{G}$  is  continuous in
$L^2(Q_{\bar{T}})$,  and  the  existence  of  a  local solution is
completely  proved.
 The  proof of   the uniqueness runs  like that  the  continuity  of  $G$.

  Let  $(\rho,\mathbf{u})$  and $(\bar{\rho}, \bar{\mathbf{u}})$ be  two  solutions  of (\ref{eq:s4})   and  let
  $\tilde \rho  =\rho -\bar  \rho$  and
   $\mathbf{U}=\mathbf{u}-\bar {\mathbf{u}}$. Then  $\tilde {\rho}  =\rho -\bar {\rho}$ and $\mathbf{U}$ satisfy the  equation.
\begin{align}
&\partial_t\tilde {\rho} + \mathbf{U}\cdot \nabla\bar{\rho}  + \mathbf{u}\cdot \nabla \tilde {\rho}=0,  \;  \tilde {\rho(0)}=0.\label{eq:s13}\\ \nonumber
\end{align}
Multiplying    (\ref{eq:s13} )  by $\tilde \rho$ and  after  integration  over  $Q_{{T}}$ we   have

 $$|\tilde {\rho}(t)|_2^2\leq  e^{ct}\int_0^{t}|\mathbf{U}\cdot \nabla \bar{\rho}|_2^2dt.$$

 Moreover, $\tilde{\rho}$ and  $\mathbf{U}$ satisfy
    \begin{align}
& \rho\partial_t \mathbf{U}(t)+\mathrm{T}_p(\mathbf{u}(t))\mathbf{U}(t)+(\mathrm{T}_p({\mathbf{u}}(t))-
 \mathrm{T}_p(\bar{\mathbf{u}}(t)))\bar{\mathbf{u}}(t)+\nabla\Pi =  \label{eq:s14}\\ \nonumber
 & \tilde{\rho}\partial_t \bar{\mathbf{u}}(t)-\rho  \mathbf{u}(t)\cdot \nabla  \mathbf{u}(t) +
 \bar{\rho}\bar{\mathbf{u}}(t)\cdot\nabla\bar{\mathbf{u}}(t) +\tilde {\rho}  f.\\ \nonumber
\end{align}

    Therefore it  is  easy to  derive  the inequality
  \begin{align}
&\frac{1}{2}d_t(|\sqrt{\rho}  {\mathbf{U}}|^2_2 +|\tilde  {\rho}|_2^2)+c\| {\mathbf{U}}\|^2 \leq \label{eq:s15}\\ \nonumber
&\omega(t)( \sqrt{\rho} {\mathbf{U}}|^2_2+|\tilde {\rho}|_2^2)+
 \delta  |\nabla {\mathbf{U}}|_2^2,\\ \nonumber
  \end{align}
where

$$\omega(t)=c(|\partial_t\bar{\mathbf{u}}|_2^2 +|\bar {\mathbf{u}}\cdot \nabla \bar {\mathbf{u}}|_3^2+ |\nabla \bar{\rho}|^2_3 +|f|_3^2+ |\nabla \bar {\mathbf{ u}}|^2_{\infty} +  |\nabla \mathbf{u}|^2_{\infty} +|\tilde{\rho}|_{\infty}^2).  $$

Now,  integrating  the  differential inequality  (\ref{eq:s15}),  we   get, for  suitable  $\delta $,

$$|\sqrt{\rho}{\mathbf{ U}}|^2_2 +|\tilde {\rho}|_2^2)=0 \;  a.e.  \; in \;  (0,\bar{T}),$$

$$  \int_0^{\bar{T}}|\nabla {\mathbf{U}}|^2dt=0.$$

Hence  $  {\mathbf{U}} =0$  and $\tilde {\rho}=0$  a.e.in $Q_{\bar{T}}$.

The  theorem  is  completely  proved.

\end{proof}

\section{Multiphase problem}
Up to now, we have  considered  problems  in  fixed  domains.
 Many  physical  problems deal  with  unsteady fluid-fluid, fluid-vacuum or  fluid-structure interaction phenomena.
 These  phenomena  are  of  major  importance  for  aerospace,  mechanical or  biomedical applications.
 The  problem is  to  describe   the  evolution of  a  viscous  fluid  coupled  with  a   moving structure (solid or  liquid).  Several conditions determine the  coupling  between the  media at the surface of  separation or  interface.
 If the  fluid domain   varies under  an  assigned  law the  domain is called  "   non-cylindrical
 domain"  in the  other  cases  it is  called  " free  boundary  domain  ".

The fluid-fluid or  fluid-vacuum problems are  well posed in the  theory of  multiphase systems
used  in fluid mechanics. There,  the  equations of  conservation  of  mass, momentum,  energy and  chemical
species are  written separately  for  each phase,  assuming  that  temperature, pressure, density and
composition  of  each  phase are   equal to  their  equilibrium values.
Accordingly,  these  equations are  supplemented by  boundary conditions  at  interface,  namely
\begin{align}
&\mathbf{ \sigma}|_+^- =\mathcal{C} \mathrm{t} \mathbf{n} -(\mathbf{I}-\mathbf{n}\otimes\mathbf{n} )\cdot\nabla \mathrm{t},  \label{eq:m1}\\ \nonumber
&\mathbf{v}|_+^- =0,\; \mathrm{\theta}|_+^-=0,\\ \nonumber
\end{align}
 with  $\mathbf{n}$ denoting  the  normal at  interface,  stating  that  the  jump of  the
 stress tensor $\mathbf{\sigma}$  at  the  interface is  related  to the  curvature $\mathcal{C}$,  the surface  tension
 $\mathrm{t}$ and  its  gradient, while  the velocity $\mathbf{v}$  and  the  temperature  $\mathrm{\theta} $  are  continuous,
(see \cite{rif1}, \cite{rif19}).

Similar boundary  conditions  exist for  the  transport of  energy  and mass
\begin{align}
& \mathbf{J}_q|_+^-\cdot\mathbf{n} =0,\; \mathbf{J}^i|_+^-\cdot\mathbf{n} =0\label{eq:m2} \\ \nonumber
& \rho^i|_+^-=(k-1)\rho^i|^+\\ \nonumber
\end{align}
stating that  standard (Fourier) heat flux $\mathbf{J}_q$, and the  diffusive  flux  of  any  chemical species  $i$,
 $\mathbf{J}^i$ are  continuous  across the interface (assuming  no phase   transition and  no surface reactions)
while  the  concentration $\rho^i$ can  undergo a  jump, depending  on  a partition coefficient $k$,   given  by
thermodynamics.

Mathematically,  the  model presents serious  difficulties.  A  solution  is  obtained,  in  long
time,  in  the  context  of  Caccioppoli domains. In  this  case the inner normal and  the  interface  are
understood in  a measure theoretic  sense and  non  in  topological  sense (see \cite{rif33}).

Any way, the  free-boundary description is  an effective  model in a wide range  of situations. However, there
 are important instances  where  it   breaks  down, i.e. the  interfacial
thickness is  comparable  to  the  length scale   of  the
 phenomenon.    So an  other approach  was proposed by Rayleigh  and  Van der Waals
 who assumed    that  the  interface  has  a  non-zero thickness, i.e. it is diffuse. Diffuse-interface  models
 provide an  alternative description of interface  motion ( also in  the  case  of fluid-rigid body problem).
 Quantities  that  in the free-boundary  formulation are  localized   in  the  interfacial surface   are  distributed
 throughout interfacial  region.
 The main  characteristic of  the  diffuse
interface model is the use   of an  order parameter  which  undergoes  a rapid   but  continuous
 variation  across the  interphase boundaries,  while  varies  smoothly in  each   bulk phase. In  view
  of  the  arbitrary choice of  " order  parameter " instead  of " diffuse  model " we  write  "field  phase  model".
 Phase-field methods
are based on models of fluid free energy (see \cite{rif1}). The simplest model of free energy density  goes back to
Van der Waals. Cahn and Hilliard \cite{rif5},\cite{rif6} extended Van der Waals hypothesis
to time-dependent situations by approximating interfacial diffusion fluxes as being proportional to chemical potential gradients.
 The  Cahn-Hilliard equation is
\begin{align*}
   & \partial_t C = \lambda\Delta  \phi= - \lambda\Delta(\Delta C - \psi'(C)),\\
   \end{align*}
where $C$ is  order  parameter, $\phi$  is  the  chemical potential, $\psi(C)$  is the  bulk energy  density  that  models  the  fluid  components  immiscibility and  $\lambda$  is a diffusion  parameter.
Mathematical  models are  based  on coupling of  a  Cahn-Hilliard equation, incompressible  homogeneous Navier- Stokes  equations
and   heuristic  speculative formulations.
 Accordingly, the  incompressible  Navier-Stokes equations  are   modified by  the  addition of
 the   continuum forcing related  to  chemical potential (Korteweg-type  stress  tensor ).  The  density (concentration) is variable that  distinguishes the  bulk fluids and the intervening of   interface.

The  fluid-model obtained  has  two characteristics: 1) is  compressible, 2) the  order parameter
  is  constrained  in the  interval $[-1,1]$.  These two conditions make complicate
    the mathematical treatment of  the problem.

   We  quote  the paper \cite{rif4} as  a  reference for  the  construction  of  an  incompressible  model.
   In  appendix, we adapt the procedure  suggested  in \cite{rif2}, \cite{rif8}  to  deduce the
following  incompressible model including  the  effect of   advective-diffusion process.
\begin{align}
&\rho\partial_t \mathbf{ u} +\rho \mathbf{ u}\cdot\nabla \mathbf{ u} -
\lambda \nabla \theta\cdot \nabla \mathbf{u}-\lambda  \mathbf{ u}\cdot\nabla{\nabla\theta}+
\nabla \pi
 -\mu \Delta \mathbf{ u} +\label{eq:t15}\\ \nonumber
&\lambda^2\nabla\cdot(\frac{\nabla \theta \otimes \nabla\theta}{ \rho})  =
\rho \mathbf{f},\\ \nonumber
&\partial_t\rho +\mathbf{u}\cdot \nabla \rho -\Delta \theta=0,\; \nabla\cdot \mathbf{u}=0.\\ \nonumber
\end{align}
Here $\theta =\rho- D\Delta\rho$, $\rho$ is  the  density,  $\lambda$ is   the  diffusion coefficient and $D$ is  the                                  mobility coefficient.
  In  the   next section we  discuss  the  existence for  the  system (\ref{eq:t15}).

\subsection{Existence problem for the diffusion equation and a priori estimates. }

We deduce a priori estimates of the solution of the Neumann problem
\begin{align}
&\partial_t \rho + \mathbf{\psi} \cdot \nabla \rho- \lambda\Delta(\rho -D\Delta \rho) = 0,  \label{eq:1d}\\ \nonumber
&\rho(0) = \rho_0,\;
\partial_{\mathbf{n}}\rho =  \partial_{\mathbf{n}}\Delta\rho =0 \;on\; \Gamma.\\ \nonumber
\end{align}
Here $\mathbf{\psi}$ is a smooth divergence free function  vanishing  on $\Gamma$.

For the mathematical setting of the problem, we introduce
$$\mathcal{H}:=\{\phi|\phi\in H^2(\Omega),\;\partial_{\mathbf{n}}\phi_{\Gamma}=0\},$$
$$\mathcal{H}^2:=\{\phi|\phi\in H^4(\Omega),\;\partial_{\mathbf{n}}\phi_{\Gamma}=\partial_{\mathbf{n}}\Delta
\phi_{\Gamma}=0\}.$$
$\mathcal{H}$   is closed  subspace  of $H^2(\Omega)$ and is   endowed  with the  scalar
product   and  the  norm
$$((\phi,\chi))=\int_{\Omega} (\Delta \phi, \Delta \chi)dx,\;  \|\phi\|_*^2=((\Delta \phi,\Delta \phi)).$$
$a(\phi,\chi):=((\phi,\chi))=\int_{\Omega} (\Delta \phi, \Delta \chi)dx$  is  a  bilinear  form on $H^2$.
By   the   Poicare's  inequality and  regularity  of  elliptic  problem, $\|\phi\|_*+ c|\phi|_2$ or $\|\phi\|_*+ c\int_{\Omega}\phi dx$  are  norms   in  $\mathcal{H}$ equivalent  to  the   norm  induced  by $H^2(\Omega)$. In  general, denoted
$m(\phi)=\frac{1}{|\Omega|}\int_{\Omega}\phi(x)dx$ ,   $s\geq 0$,
$$\|\phi-m(\phi)||_1\leq  c|\nabla \phi|_2,\; \|\phi-m(\phi)\|_{s+2}\leq c\|\Delta \phi\|_s. $$

For  convenience,  we set  $\lambda=D=1$. The existence for the diffusion equation (\ref{eq:1d}) can be  performed
through Faedo-Galerkin method.

This procedure is well known in
literature so we omit details.
We  prove now a priori estimates, formally.

First, we notice that  the maximum principle does  not   hold, in
general. But  the  average  value  is  conserved:
$$\int_{\Omega}\rho(t)dx =\int_{\Omega}\rho_0. $$
Moreover, since  $\rho$  is a  solution of  the  continuity  equation (see appendix (\ref{eq:t3})) $\rho(t)\geq 0$  if $ \rho_0\geq 0$.
Indeed, $\rho$  satisfies
$$ (\inf \rho_0)exp(-\int_0^t|\nabla\cdot\mathbf{w}(\tau)|_{\infty})d\tau\leq \rho(x,t).$$
Now, we prove five levels of regularity for $\rho$.

Multiplying $(\ref{eq:1d})_1$  by $\rho$ and integrating by parts in
$\Omega$,  we get
$$
d_t|\rho|_2^2 + 2|\nabla \rho|_2^2+ 2|\Delta \rho|_2^2 = 0,
$$
then
\begin{align}
|\rho(t)|_2^2 + \int_0^t (|\nabla \rho(\tau)|_2^2+|\Delta \rho(\tau)|_2^2)d\tau\leq |\rho_0|_2^2.
\label{eq:2d}\\ \nonumber
\end{align}

\noindent
The  estimate   (\ref{eq:2d})   is   independent   of  $\mathbf{\psi}$.

Now, multiplying (\ref{eq:1d}) by $- \Delta \rho$ and after
integration by parts on $\Omega$,  we
obtain
$$
\frac{1}{2} d_t|\nabla \rho|_2^2 + |\Delta \rho|_2^2+|\nabla\Delta \rho|_2^2
= ( \mathbf{\psi}\cdot \nabla \rho, \Delta \rho).
$$
In virtue of
\begin{align*}
&|(\mathbf{\psi}(t)\cdot
\nabla \rho(t), \Delta \rho(t))| \leq c |\psi(t)\rho(t)|_2^2   + \frac{1}{2}|\nabla\Delta\rho(t)|^2_2\leq\\
  &c  |\mathbf{\psi}(t)|^2_2|\Delta \rho(t)|^{3/2}_2|\rho(t)|_2^{1/2}
  + \frac{1}{2}|\nabla\Delta\rho(t)|^2_2,\\
  \end{align*}
thus we get

\begin{align}
&|\nabla \rho(t)|_2^2 + \int_0^t(|\Delta \rho(\tau)|_2^2+ |\nabla\Delta \rho(\tau)|_2^2)d\tau
\leq \\ \nonumber
 & c\sup_{0\leq \tau\leq t}|\mathbf{\psi}(\tau)|_2^2\sqrt[4]{t}+ |\nabla\rho_0|^2_2:=\Psi_1(t).\\ \nonumber
\end{align}

Notice that the above estimate requires  $\mathbf{\psi} \in
L_{\infty}(0,T;L_2(\Omega))$, only.
 Now,  we deduce  $H^4$ -estimate for $\rho$.

 Multiplying $(\ref{eq:1d})_1$ by $\Delta^2 \rho$ and  after integration by
parts ( bearing in mind the boundary terms vanish),  we deduce
$$
 \frac{1}{2}d_t|\Delta \rho|_2^2 + |\Delta \nabla
\rho|_2^2 +|\Delta^2 \rho|_2^2 =-(\mathbf{\psi}\cdot \nabla \rho, \Delta^2 \rho).
$$
In virtue of
\begin{align*}
&|(\mathbf{\psi}(t)\cdot \nabla \rho(t),\Delta^2\rho(t))| \leq c|\mathbf{\psi}(t)|_2|\nabla\rho(t) |_{\infty}|\Delta^2\rho(t)|_2\leq \\
&c|\mathbf{\psi}(t)|^2_2|\nabla\rho(t)|_2^{1/2}|\nabla\Delta\rho(t)|^{^3/2}_2 +
\frac{1}{2}|\Delta^2\rho(t)|^2_2,\\
\end{align*}
thus we obtain

\begin{align}
&|\Delta \rho(t)|_2^2 +
\int_0^t(|\nabla \Delta \rho(\tau)|_2^2+|\Delta^2\rho(\tau)|^2_2)d\tau\leq\label{eq:3d}\\ \nonumber
&c\sqrt[4]{t}(\sup_{0\leq
\tau\leq t}|\mathbf{\psi}(\tau)|^2_2)\Psi_1(t)+|\Delta\rho(0)|^2_2:=\Psi_2(t).\\ \nonumber
\end{align}
Next,  we  prove   $H^5$-estimate.
First,   we  apply  the   $\nabla$  operator to $(\ref{eq:1d})_1$ and
then  we  multiply   the  result  by  $\nabla \Delta^2\rho$, after
integration    by parts, we  obtain
\begin{align*}
& \frac{1}{2}d_t|\nabla\Delta \rho|_2^2 + |\nabla\Delta^2\rho|_2^2 +|\Delta^2 \rho|_2^2 =
-(\mathbf{\psi}\cdot \nabla \nabla\rho, \nabla\Delta^2 \rho)-(\nabla\mathbf{\psi}\cdot  \nabla\rho, \nabla\Delta^2 \rho)\leq\\
&c(|\psi|_6^2|\nabla\nabla \rho|^2_3+|\nabla \psi|_2^2 |\nabla\rho|_{\infty}^2) +
\frac{1}{2}|\nabla\Delta^2 \rho|_2^2\leq c|\nabla\mathbf{ \psi}|^2_2|\nabla\rho|_6|\nabla\Delta\rho|_2+
\frac{1}{2}|\nabla\Delta^2 \rho|_2^2.\\
\end{align*}
We obtain
\begin{align}
&|\nabla\Delta \rho(t)|_2^2 +  \int_0^t(|\nabla \Delta^2 \rho(\tau)|_2^2+|\Delta^2\rho(\tau)|^2_2)d\tau\leq
\label{eq:4d}\\ \nonumber
&\sqrt{t}(\sup_{0\leq\tau\leq t}
|\nabla\mathbf{\psi}(\tau|^2_2|)\Psi_2(t)+
|\nabla\Delta \rho(0)|_2^2:=\Psi_3(t).
\\ \nonumber
\end{align}
According   to  the   above   estimates, if  $\mathbf{\psi}\in
L^2(0,T;H_0^1(\Omega))$,   we  have

$$\partial_t\rho \in L^2(0,T;H^1(\Omega)).$$

In addition, if we assume $\partial_t \mathbf{\psi} \in L^2(Q_T)$, we derive  analogous estimates  for
$\partial_t \rho$. Indeed, setting $\eta=\partial_t \rho$ we  get
\begin{align}
   &\partial_t \eta +\mathbf{\psi}\cdot\nabla \eta-\Delta(\eta-\Delta \eta)= -\partial_t \mathbf{\psi}\cdot\nabla \rho.
    \label{eq:5d}\\ \nonumber
    \end{align}
Similarly,  we  obtain   $H^4$- estimates for  $\eta$ considering
$ -\partial_t \mathbf{\psi}\cdot \nabla\rho$ as  a given right  hand  side, namely   adding to  the above  estimates
the $L^2(Q_T)$  norm  of $ -\partial_t \mathbf{\psi}\cdot \nabla\rho$
 with $\eta(0)\in H^3(\Omega)$.

Consequences   of  the  previous  estimates and  proposition 1 are :
\begin{align}
&1.\;\int_0^t|\nabla \rho(\tau)|^2_{\infty}d\tau \leq c\sqrt[4]{t}\Psi_1(t);\label{eq:6dd}
\\ \nonumber
&2. \;\sup_{0\leq\tau\leq t}|\Delta \rho(\tau)|_2^2\leq
 c\Psi_2(t);\\ \nonumber
&3. \;\int_0^t|\Delta^2\rho(\tau)|^2_2d\tau \leq c\Psi_2(t);\\ \nonumber
&4. \;\int_0^t|\Delta^2\nabla\rho(\tau)|^2_2d\tau \leq c\sqrt{t}(sup_{0\leq\tau\leq t}|\nabla\mathbf{\psi}(\tau)|^2_2)\Psi_2(t) +|\Delta \nabla\rho(0)|_2^2\\ \nonumber
&:=\Psi_3(t);\\ \nonumber
&5.\;\sup_{0\leq\tau\leq t}|\Delta\nabla\rho(\tau)|^2_2 \leq c\sqrt{t}(sup_{0\leq\tau\leq t}|\nabla\mathbf{\psi}(\tau)|^2_2)\Psi_2(t) +
|\Delta \nabla\rho(0)|_2^2\\ \nonumber
&:=\Psi_3(t)\\ \nonumber
&6. \;\int_0^t|\partial_{\tau}\Delta^2\rho(\tau)|^2_2d\tau\leq c (\sqrt{t}(sup_{0\leq\tau\leq t}|\nabla\mathbf{\psi}(\tau)|^2_2)\Psi_2(t) + \\ \nonumber
&|\Delta \nabla\rho(0)|_2^2)\int_0^t|\partial_{\tau}\mathbf{\psi}|^2_2d\tau +
  \int_0^t|\mathbf{\psi}\cdot\nabla \partial_t\rho(\tau)|_2^2d\tau + |\Delta \partial_t\rho(0)|^2_2\leq \\ \nonumber
&c \Psi_3(t)\int_0^t|\partial_{\tau}\mathbf{\psi}(\tau)|^2_2d\tau +
 \sqrt[4]{t}\sup_{0\leq \tau \leq t}|\nabla\mathbf{\psi}(\tau)|^2_2\Psi_2(t)
\int_0^t|\partial_{\tau}\mathbf{\psi}(\tau)|^2_2d\tau +\\ \nonumber
  &|\partial_t \rho(0)|_2^2 +|\Delta \partial_t\rho(0)|_2^2:=\Psi_4(t).\\ \nonumber
\end{align}

\subsection{ Graffi's type  model}

In   this  section  we  discuss  a  simplified   model instead  of (\ref{eq:t15}).  We
notice   that the  term

$$\lambda  (\mathrm{u}\cdot \nabla)\nabla \theta$$
in  (\ref{eq:t15}) is   a serious   obstacle to prove the  existence   of  a sort  of  solution  (  weak   or  strong)  .  If
$\theta  =\rho$  (see \cite{rif2}), the   above   term is  efficiently estimated   by   using   the maximum  principle,
$|\nabla \rho|_4\leq  c|\rho|_{\infty}|\Delta \rho|_2$, and   a constrain  on $\lambda$, $\mu$  and  the maximum  of $\rho$.

For  future investigations, in this section we study the  system  omitting  the terms
$\lambda  \mathbf{u}\cdot \nabla\nabla \theta$ and O($\lambda^2$). Graffi  in \cite{rif13} considered a system of  equations type  (\ref{eq:t15}) but
discarding   the  two O($\lambda$) and O($\lambda^2$) terms
  in  $(\ref{eq:t15})_1$. In  other  words  we   consider  the following    simplified model ($\lambda =1$)

\begin{align}
&\rho\partial_t \mathbf{u}+\rho \mathbf{u}\cdot\nabla \mathbf{u}
-\nabla \theta\cdot\nabla \mathbf{u}
-\nabla \pi - \mu \Delta \mathbf{u} = \rho \mathbf{f},\label{eq:6d}\\ \nonumber
&\partial_t \rho + \mathbf{u}\cdot\nabla \rho -\Delta(\rho  -\Delta \rho)=0,\\ \nonumber
 &\nabla\cdot \mathbf{u} =0, \;\rho(0)=\rho_0, \;\mathbf{u}(0)=\mathbf{u}_0.\\ \nonumber
\end{align}

We will derive essential a priori estimates, formally.  The complete proof  can  be  performed through
 a  Faedo-Galerkin process. Now,  we  prove  a priori  estimates which  allow to   deduce   the  existence of   a  weak  solution
of  (\ref{eq:6d}).  We  consider  the  approximate   system

\begin{align}
&\rho\partial_t \mathbf{u}^n+\rho^n \mathbf{u}^{n-1}\cdot\nabla \mathbf{u}^n
-\nabla \theta^n\cdot\nabla \mathbf{u}^n-
\nabla \pi^n - \mu \Delta \mathbf{u}^n =\rho^n \mathbf{f},\label{eq:7d}\\ \nonumber
&\partial_t\rho + \mathbf{u}^{n-1}\cdot\nabla \rho^n -\Delta(\rho^n  -\Delta \rho^n)=0,\\ \nonumber
 &\nabla\cdot \mathbf{u}^n =0 , \rho^n(0)=\rho^n_0, \;\mathbf{u}^n(0)=\mathbf{u}^n_0, \\ \nonumber
\end{align}

\noindent
where $\mathbf{u}^{n-1}$   is  a suitable solenoidal function. Moreover, we   make   use   of  the results   in  section 3
and of the  usual   initial- boundary  conditions.
Multiplying $(\ref{eq:7d})_1$  by  $\mathbf{u}^n$  and   after
integration by  parts we   get
\begin{align}
&d_t|\sqrt {\rho^n}\mathbf{u}^n|_2^2 + \mu|\nabla \mathbf{u}^n|_2^2- \frac{1}{2}(\mathbf{u}^n\partial_t\rho^n,\mathbf{u}^n)+
 (\rho^n \mathbf{u}^{n-1} \cdot\nabla \mathbf{u}^n,\mathbf{u}^n)+ \label{eq:8d}\\   \nonumber
 &\frac{1}{2}(\Delta\theta^n \mathbf{u}^n,\mathbf{u}^n)-
 (\rho^n \mathbf{f},\mathbf{u}^n)=0.\\ \nonumber
\end{align}
Multiplying $(\ref{eq:7d})_2$ by $\frac{|\mathbf{u}^n|^2}{2}$ and summing the  result to (\ref{eq:8d}) we   get
\begin{align}
&d_t|{\sqrt \rho}\mathbf{u}^n|_2^2 + \mu|\nabla \mathbf{u}^n|_2^2=
(\rho f,u^n),
\label{eq:d8}\\ \nonumber
\end{align}
and  easily we obtain the  standard   energy  estimate
\begin{align}
&|\sqrt {\rho}\mathbf{u}^n(t)|_2^2 + \mu\int_0^t|\nabla \mathbf{u}^n(\tau)|_2^2d\tau\leq |\sqrt {\rho(0)^n}\mathbf{u}^n(0)|_2^2 +
\int_0^t(|\rho^n f(\tau)|_2^2d\tau.
\label{eq:9d}\\ \nonumber
\end{align}
Notice   that  the right-hand  side in (\ref{eq:9d})  does   not
depend   on  $\mathbf{u}^{n-1}$.

Consequently, there exists  a subsequence
$\{\mathbf{u}^n,\rho^{n} \}$ such that
\begin{align}
& \mathbf{u}^n\rightarrow \mathbf{u}  \; \text{weakly in }\; L^2(0,T;V),\label{eq:11d}\\ \nonumber
  &\rho^{n}
\rightarrow \rho \;\text{ weakly  in
} L^{2}(0,T;H^4(\Omega)),\; \text{ strongly  in } \;L^2(0,T;H^2(\Omega)),\\ \nonumber
&\mathbf{u}^n \rho^{n}\rightarrow \mathbf{u}\rho, \quad
u_i^{n-1}u_j^{n}\rho^{n} \rightarrow \alpha_{ij} \text{ in  the  sense   of  distributions}.\\ \nonumber
\end{align}

To complete the existence proof of a weak solution we have to show
$\alpha_{ij} = \rho u_iu_j $. For this we estimate  the time
derivative of  $\mathbf{u}^n$.

\subsection{Time derivative estimates and compactness result.} Now, let $ \phi $ be  a smooth solenoidal  function vanishing  on  $\Gamma$.
Multiplying  $(\ref{eq:7d})_1$ by $ \mathbf{\phi}$  and after integration by parts we get
\begin{align}
&\int_0^T((\partial_t\rho ^{n}\mathbf{u}^{n}(t),\mathbf{\phi}(t)) +
(\rho^{n}(t)\mathbf{u}^{n-1}(t),\mathbf{u}^{n}(t)\cdot \nabla \mathbf{\phi}(t))  - \mu
(\nabla \mathbf{u}^{n}(t),\nabla \mathbf{\phi}(t)) -\label{eq:12d}
\\ \nonumber
&((\nabla \theta^{n}(t)\cdot
\nabla)\mathbf{\phi}(t),\mathbf{u}^n(t))+
(\rho^n f(t),\mathbf{\phi}(t)))dt = 0.\\ \nonumber
\end{align}
The estimates (\ref{eq:6dd}), (\ref{eq:9d}) show that
$\partial_t(P\rho^{n}\mathbf{u}^n(t))$ is bounded
in

\noindent
$L_2(0,T;H^{-2}(\Omega))$,
uniformly with respect to $n$, while
$\rho^{n}\mathbf{u}^{n}$ and thus $P\rho^{n}\mathbf{u}^{n}$ are
bounded

\noindent
in $L_{\infty}(0,T;L_2(\Omega))$, uniformly  with respect to
$n$.

Hence, by classical compactness theorems,
$\{P\rho^{n}\mathbf{u}^{n}\}$
is a compact set in

\noindent
$ L_2(0,T;H^{-1}(\Omega))$.

\noindent
In particular, since (subsequence) $\{\rho^{n}\mathbf{u}^{n}\}$
converges weakly to $\rho \mathbf{u}$, $\{P\rho^{n}\mathbf{u}^{n}\}$
converges to $P\rho \mathbf{u}$ in $ L_2(0,T;H^{-1}(\Omega))$.

Now,  making  use  of  the  procedure  in theorem 4, we  get
the strong
convergence in $L^2(Q_T)$ of $\sqrt {\rho^n}\mathbf{u}^{n}$ to
${\sqrt \rho} \mathbf{u} $. This convergence implies that  $\alpha_{ij}= \rho u_i u_j$.

The  existence  proof    of   a  weak   solution  of (\ref{eq:6d})    is thus  complete.

\section{Local existence  problem  for  the  system (\ref{eq:t15}).}
In this  section  we  prove  the  following  theorem
\begin{theorem}   Let  $\Omega \subset \mathbb{R}^3$ be  a  domain  with compact  boundary  $\Gamma $  of  class $C^3$.
Assume  $\mathbf{u}_0\in\mathrm{V}$, $(\rho_0,\partial_t\rho(0))\in (H^3(\Omega), H^2 (\Omega))$ and  $\mathbf{f}\in
L_{\infty}(0,T;L_2(\Omega))$. Then there exists  a  $\bar{T}>0$  such  that
there is   a  unique  solution  $(\mathbf{u},\rho,\pi)$ of
\begin{align}
&\bar{\rho}\partial_t\mathbf{u} - \mu \Delta \mathbf{u} + \bar{\rho}\mathbf{u}\cdot \nabla \mathbf{u}
-\lambda((\mathbf{u}\cdot \nabla)\nabla \theta+(\nabla \theta\cdot \nabla)\mathbf{u})+\label{eq:0r}\\ \nonumber
&\frac{\lambda^2}{\bar{\rho}}((\nabla \theta\cdot \nabla)\nabla
\theta - \frac{1}{\bar{\rho}}(\nabla \bar{\rho}\cdot\nabla \theta)\nabla\theta+
\Delta \theta\nabla \theta)+ \nabla \pi -\bar{\rho} \mathbf{f} = 0,\\ \nonumber
&\partial_t \rho + \mathbf{u} \cdot \nabla \rho- \lambda\Delta\theta = 0,  \\ \nonumber
&\nabla\cdot \mathbf{u}=0,\; \mathbf{u}(0)=\mathbf{u}_0,\;\rho(0) = \rho_0,\;
\partial_{\mathbf{n}}\rho =\partial_{\mathbf{n}}\Delta \rho =0  \;on\; \Gamma.\\ \nonumber
\end{align}
 on the  time interval  $[0,\bar{T})$  such  that
 \begin{align*}
&\mathbf{u}\in H^1(0,\bar{T},V^0(\Omega))\cap L_2(0,\bar{T},H^2(\Omega)\cap V),\;\pi\in L_2(0,\bar{T};H^1(\Omega))/\mathbb{R}),\\
&\rho\in  L_2(0,T;H^5(\Omega)),\;
\partial_t\rho \in L_2(0,T;H^4(\Omega)).\\
\end{align*}
\end{theorem}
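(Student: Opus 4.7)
The plan is to imitate the fixed-point scheme of Theorem 6.1 (Section 6), this time using the fourth-order diffusion equation of Section 7.1 in place of the transport equation. Specifically, I would introduce the ball
$$\mathrm{B}=\bigl\{\mathbf{v}\in H^1(0,T;V^0)\cap L_2(0,T;H^2\cap V):\ \mathbf{v}(0)=\mathbf{u}_0,\ \sup(\|\mathbf{v}\|_{L_2(0,T;H^2)},\|\mathbf{v}\|_{H^1(0,T;V^0)})\leq r\bigr\},$$
and define a map $\mathrm{G}:\mathbf{v}\mapsto\rho\mapsto\mathbf{u}$ by first solving the linear diffusion equation $\partial_t\rho+\mathbf{v}\cdot\nabla\rho-\lambda\Delta(\rho-D\Delta\rho)=0$ with $\rho(0)=\rho_0$ and the Neumann-type conditions of Section 7.1, and then solving a linear Stokes-type problem for $\mathbf{u}$ with the coefficient $\bar\rho$ (treated as the datum produced in the previous step) and with the remaining nonlinear terms, evaluated at $\mathbf{v}$ and $\theta=\rho-D\Delta\rho$, placed on the right-hand side. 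A fixed point of $\mathrm{G}$ will be the desired local solution.

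The diffusion step is handled by the a priori estimates (7.2)--(7.6) of Section 7.1, which, given $\mathbf{v}\in\mathrm{B}$, produce
$$\rho\in L_\infty(0,T;H^3(\Omega))\cap L_2(0,T;H^5(\Omega)),\qquad \partial_t\rho\in L_2(0,T;H^4(\Omega)),$$
with bounds $\Psi_1,\Psi_2,\Psi_3,\Psi_4$ that depend on $r$ and tend to the initial values as $T\to 0^+$. The momentum step uses the maximal-regularity theorem (Theorem 6.2) for the operator $\partial_t-\tfrac{\mu}{\bar\rho}\Delta$ with initial datum $\mathbf{u}_0\in V$; invoking it requires checking that the right-hand side
$$\bar\rho\mathbf{f}-\bar\rho\mathbf{v}\cdot\nabla\mathbf{v}+\lambda\bigl((\mathbf{v}\cdot\nabla)\nabla\theta+(\nabla\theta\cdot\nabla)\mathbf{v}\bigr)-\tfrac{\lambda^2}{\bar\rho}\bigl((\nabla\theta\cdot\nabla)\nabla\theta-\tfrac{1}{\bar\rho}(\nabla\bar\rho\cdot\nabla\theta)\nabla\theta+\Delta\theta\,\nabla\theta\bigr)$$
lies in $L_2(Q_T)$. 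The convective $\mathbf{v}\cdot\nabla\mathbf{v}$ term is handled as in Section 6 (H\"older/Gagliardo--Nirenberg), picking up a positive power of $T$; the cross terms with $\nabla\theta$ are controlled by $\|\nabla\theta\|_{L_\infty(0,T;H^1)}$, which is bounded by $\Psi_3$ via the third-order estimate for $\rho$; and the $O(\lambda^2)$ block is estimated by combining $\|\nabla\theta\|_{L_\infty(0,T;H^1)}$ with $\|\Delta\theta\|_{L_2(0,T;H^1)}$ (i.e.\ $H^5$-control on $\rho$).

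Self-mapping $\mathrm{G}(\mathrm{B})\subseteq\mathrm{B}$ will follow from these bounds by choosing $\bar T=\bar T(r,\mathbf{u}_0,\rho_0,\mathbf{f})$ small enough so that the constants above are dominated by $r$. Continuity of $\mathrm{G}$ in the weak norm $L_2(Q_{\bar T})$ is the crux of obtaining a fixed point by a Schauder/compactness argument. For a sequence $\mathbf{v}^n\to\mathbf{v}$ strongly in $L_2(Q_{\bar T})$, one first subtracts the diffusion equations for $\rho^n$ and $\rho$ and tests against $\tilde\rho^n=\rho^n-\rho$ (plus a second test against $\Delta^2\tilde\rho^n$ to close at the level required), deducing strong convergence of $\rho^n$ and of the derivatives of $\theta^n$ needed downstream; then one subtracts the linearized momentum equations and runs a Gronwall argument on $|\mathbf{u}^n-\mathbf{u}|_2^2$, exactly as in the last part of the proof of Theorem 6.1.

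The main obstacle will be the $O(\lambda^2)$ Korteweg-type term $(\nabla\theta\cdot\nabla)\nabla\theta$, which involves fourth-order spatial derivatives of $\rho$ and is cubic in the phase-field variable. Controlling it in $L_2(Q_T)$ requires the full $H^5$-estimate $\int_0^t|\nabla\Delta^2\rho|_2^2\,d\tau\leq\Psi_3(t)$ of Section 7.1, which in turn demands $\partial_t\mathbf{v}\in L_2(Q_T)$ and $\mathbf{v}\in L_2(0,T;H_0^1)$ uniformly on $\mathrm{B}$; this is precisely why the ball $\mathrm{B}$ is chosen in the intersection $H^1(0,T;V^0)\cap L_2(0,T;H^2\cap V)$. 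Once the fixed point is produced, uniqueness is obtained, as in the last part of Section 6, by writing the difference $(\tilde{\mathbf{u}},\tilde\rho)$ of two solutions, combining the energy identity for $\bar\rho^{1/2}\tilde{\mathbf{u}}$ (absorbing $\nabla\tilde{\mathbf{u}}$ against $\mu\int|\nabla\tilde{\mathbf{u}}|_2^2$) with the $L_2$-estimate for $\tilde\rho$ and its $H^2$-counterpart to dominate the third-order Korteweg term, and invoking Gronwall's lemma to conclude $\tilde{\mathbf{u}}=\tilde\rho=0$ on $[0,\bar T]$.
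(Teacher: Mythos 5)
Your proposal follows essentially the same route as the paper: a Schauder fixed point for the map $\mathbf{v}\mapsto\rho\mapsto\mathbf{u}$ on the ball $\mathrm{B}(r)$, with the diffusion step controlled by the $H^5$-estimates $\Psi_1,\dots,\Psi_4$ of Section 7.1, self-mapping for small $\bar T$, continuity of the map in $L_2(Q_{\bar T})$ via subtraction and Gronwall, and uniqueness as in Theorem 6.1. The only cosmetic difference is that the paper solves the linear momentum step by Lax--Milgram and $L_2$ energy estimates combined with the Stokes operator $A$ (rather than invoking the $L_q$ maximal-regularity Theorem 6.2, which is tuned to $q>n+2$ data not available here), but this does not change the argument.
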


Here  $\bar{\rho}= \rho +m$ with $m$  a  positive  number.

\begin{proof}

We prove Theorem 8.1 by fixed point argument
 following the scheme of the Theorem 6.1.
Let $$\mathrm{B}(r)=\{\mathbf{\phi}| sup( \|\mathbf{\phi}\|_{ L_2(0,T;H^2(\Omega))},  \|\mathbf{\phi}\|_{H^1(0,T;L_2(\Omega))})\leq r\}.$$
  We fix a function $\mathbf{v}\in \mathrm{B}(r)$  and subsection 7.1 gives
 the solution of
diffusion equation (\ref{eq:1d}) with $\mathbf{\psi}\equiv\mathbf{v} $ and relative  estimates on
${\rho}$ up to the $H^5$-regularity.

Next, we consider the  linear   problem with  fixed  $\mathbf{v}\in  \mathrm{B}(r)$,
\begin{align}
&\bar{\rho}\partial_t\mathbf{u} - \mu \Delta \mathbf{u} + \bar{\rho}\mathbf{v}\cdot \nabla \mathbf{v}
-\lambda((\mathbf{v}\cdot \nabla)\nabla \theta+(\nabla \theta\cdot \nabla)\mathbf{v})+\label{eq:1r}\\ \nonumber
&\frac{\lambda^2}{\bar{\rho}}((\nabla \theta\cdot \nabla)\nabla
\theta - \frac{1}{\bar{\rho}}(\nabla \bar{\rho}\cdot\nabla \theta)\nabla\theta+
\Delta \theta\nabla \theta)+ \nabla \pi -\bar{\rho}\mathbf{ f} = 0,\\ \nonumber
\end{align}
complemented with the usual initial-boundary conditions.
The
existence of a solution of system (\ref{eq:1r})  is established in \cite{rif31}   making  use of Lax-Milgram theorem.

\subsection{ A priori estimates.}

For  convenience we  set $\lambda= \mu=1$. Moreover  we  make  use  of  the inequalities
$$|\phi|^2_{\infty}\leq c|\phi|_6|\nabla\nabla \phi|_2 \; ,|\phi|_{\infty}\leq c|\phi|^{1/4}_2|\nabla\nabla \phi|^{3/4}_2.$$

 We multiply now by $\mathbf{u}$ (\ref{eq:1r}) and after integration by parts,  integration  in $t$
gives

\begin{align}
&|\sqrt{\bar{\rho}}\mathbf{u}(t)|_2^2 + \int_0^t |\nabla \mathbf{u}(\tau)|_2^2d\tau \leq  |\sqrt{\bar{\rho}}\mathbf{u}(0)|_2^2+
\int_0^t|\nabla \nabla \theta(\tau)|_4^4d\tau+\label{eq:r2}\\  \nonumber
&\sqrt{t}\sup_{0\leq \tau\leq t}|\nabla\mathbf{v}(\tau)|^2_2\Psi_2(t)(\int_0^t|A\mathbf{v}(\tau)|_2^2d\tau)^{1/2}
+t\rho(0)\sup_{0\leq \tau\leq t}|\mathbf{f}(\tau)|_2^2+\\ \nonumber
&\int_0^t(\mathbf{u}(\tau) \partial_{\tau}{\rho}(\tau),\mathbf{u}(\tau))d\tau. \\ \nonumber
\end{align}

Now, we multiply (\ref{eq:1r}) by $\partial_t \mathbf{u} $,
integrate over $\Omega$, and obtain
\begin{align}
&|\sqrt{\bar{\rho}(t)}\partial_t\mathbf{u}(t)|^2_2 +  d_t|\nabla \mathbf{u}(t) |_2^2  \leq\label{eq:r3}\\ \nonumber
& c|\bar{\rho}(t) |^2_{\infty}|\mathbf{v}(t)|_{\infty}^2|\nabla \mathbf{v}(t)|_2^2 +
 |\mathbf{v}(t)|^2_{\infty}|\nabla\nabla \theta(t)|^2_2 +
 |\nabla \theta(t)|^2_{\infty}| \nabla \mathbf{v}(t)|_2^2 +\\ \nonumber
 &|\nabla\nabla \theta(t)|^2_2|\nabla\theta(t)|^2_{\infty} +
 c|\partial_t \mathbf{u}|^2_2 \leq
|\bar{\rho}(t) |^2_{\infty}\|\mathbf{v}(t)\|^3|A\mathbf{v}(t)|_2 + \\ \nonumber
  & |\nabla \theta(t)|^2_{\infty}| \nabla \mathbf{v}(t)|_2^2+\|\mathbf{v}(t)\||A\mathbf{v}(t)|_2|\Delta \theta(t)|_2^2+ |\Delta \theta(t)|^3_2\|\theta(t)\|_3+\\ \nonumber
&|\rho(t)|_{\infty}^2|\mathbf{f}(t)|_2^2 +
\frac{1}{2}|\partial_t \mathbf{u}|^2_2.\\ \nonumber
\end{align}

Then, integrating (\ref{eq:r3}) with respect to $t$, we get
\begin{align}
&\int_0^t|\sqrt {\bar{\rho}(\tau)}\partial_{\tau}\mathbf{u}(\tau)|^2_2d\tau  + \|\nabla\mathbf{u}(t)\|_2^2  \leq
|\nabla \mathbf{u}(0)|^2_2 +\label{eq:r4}\\ \nonumber
& \sqrt{t}\Psi_2(t)\sup_{0\leq \tau\leq t}
 |\nabla\mathbf{v}(\tau)|^3_2(\int_0^t|A\mathbf{v}(\tau)|_2^2d\tau|)^{1/2}+\\  \nonumber
& \sqrt{t}\Psi_4(t)(\sup_{0\leq \tau\leq t}|\nabla\mathbf{v}(\tau)|_2(\int_0^t|A\mathbf{v}(\tau)|_2^2d\tau)^{1/2}
+ \sup_{0\leq\tau\leq t}|\nabla \mathbf{v}(\tau)|^2_2) +\\ \nonumber
  &c\sqrt{t}\Psi_3(t)\Psi_4(t) + \sqrt[4]{t}|\rho_0|^2_2\sup_{0\leq \tau \leq t}|\mathbf{f}(\tau)|_2^2.  \\ \nonumber
\end{align}

Now, we consider the Stokes problem
\begin{align}
& A\mathbf{u} = -P(\bar{\rho}\partial_t\mathbf{u}+ \bar{ \rho} \mathbf{v}\cdot \nabla \mathbf{v}
-((\mathbf{v}\cdot \nabla)\nabla \theta +(\nabla \theta\cdot \nabla)\mathbf{v})+  \label{eq:2r}\\ \nonumber
&\nabla \cdot(\frac{1}{\bar{\rho}}(\nabla \theta\otimes \nabla
\theta)) -\bar{\rho}\mathbf{f}).\\ \nonumber
\end{align}
From the theory of the Stokes problem there exists a constant $c$ such that
\begin{align}
&\int_0^t|A \mathbf{u}(\tau)|^2_2d\tau\leq
|\nabla \mathbf{u}(0)|^2_2 + c \sqrt{t}\sup_{0\leq
\tau\leq t}|\nabla \mathbf{v}(\tau)|_2^2|\Psi_4(t)+c\sqrt{t}\Psi_3(t)\Psi_4(t)\label{eq:3r}\\ \nonumber
& +c\sqrt{t}\Psi_2(t)\sup_{0\leq \tau\leq t}|\nabla\mathbf{v}(\tau)|^3_2(\int_0^t|A\mathbf{v}(\tau)|_2^2d\tau|)^{1/2}+\\  \nonumber
 & \sqrt{t}\Psi_4(t)\sup_{0\leq \tau\leq t}|\nabla\mathbf{v}|_2(\int_0^t|A\mathbf{v}(\tau)|_2^2d\tau)^{1/2}
 + \sqrt[4]{t}|\rho_0|_2^2\sup_{0\leq \tau \leq t}|\mathbf{f}(\tau)|_2^2.  \\ \nonumber
\end{align}

Choosing $r^2 \geq c(|\nabla \mathbf{u}_0|_2^2+|\Delta \nabla\rho_0|_2^2 +|\Delta\partial_t\bar{\rho}(0)|^2 +\sup_{0\leq t\leq T}|\mathbf{f}|^2_2)$
with $c$ big enough, (\ref{eq:6dd}), (\ref{eq:2r}), (\ref{eq:3r}) yield
 \begin{align}
&\int_0^t(|{\sqrt \rho}\partial_{\tau}\mathbf{u}(\tau)|^2_2+|A\mathbf{u}(\tau)|_2^2)d\tau  +  \sup_{0\leq\tau\leq t}|\nabla \mathbf{u}(\tau)|_2^2  \label{eq:4r} \leq\\ \nonumber
&c(|\nabla \mathbf{u}(0)|^2_2  + c\sqrt tr^3(\int_0^t|A\mathbf{v}|_2^2d\tau)^{1/2} + \\ \nonumber
&\sqrt t r sup_{0\leq \tau \leq t}|\Delta \theta(\tau)|_2^2(\int_0^t|A\mathbf{v}(\tau)|_2^2d\tau)^{1/2} +\\ \nonumber
&\sqrt t sup_{0\leq \tau \leq t}|\Delta \theta(\tau)|_2^3(\int_0^t\|\theta(\tau)\|_3^2d\tau)^{1/2}+
\rho(0)\sup_{0\leq \tau\leq t}|\mathbf{f}(\tau)|^2_2)\leq \\ \nonumber
&c(|\nabla \mathbf{u}(0)|^2_2 +|\Delta \nabla{\rho}_0|_2^2 +|\Delta \partial_t\rho(0)|_2^2 +\sqrt tr^4
+ t\rho^2(0)\sup_{0\leq \tau\leq t}|\mathbf{f}(\tau)|^2_2)\leq r^2.\\ \nonumber
  \end{align}
for $t= \bar T$ small enough.

Hence (\ref{eq:4r}) implies
$$G\mathrm{ }\subseteq \mathrm{}.$$

 We  prove now the continuity of $G$.

 Let $\{\mathbf{v}^n\} \subset \mathrm{B}$  be a sequence such that
 $\mathbf{v}^n\rightarrow \mathbf{v}$ in $L^2(Q_{\bar T} )$,  strongly. We notice that $\mathbf{v}\in \mathrm{B}$.
 Thanks  to Proposition 1,  we deduce
 \begin{align}
 &\mathbf{v}^n\rightarrow\mathbf{v} \; \text{in} \;L_2(0,T;L_{\infty}(\Omega))\cap L_2(0,T;W^1_3(\Omega)), \label{eq:g}\\ \nonumber
 &\theta^n \rightarrow\mathbf{\theta} \; \text{in}\; L_2(0,T;L_{\infty}(\Omega))
 \cap L_q(0,T;W^1_2(\Omega))\cap \\ \nonumber
 &L_4(0,T;W^1_3(\Omega))\cap L_{\infty}(0,T;L_2(\Omega)),\\  \nonumber
  \end{align}
  with $q>1$, strongly.

Let $\rho^n$, $\rho$ be solutions of
 \begin{align}
  &\partial_t \rho^n + \mathbf{v}^n\cdot \nabla \rho^n-  \Delta \theta^n = 0;\; \rho^n(0)= \rho_0,
   \partial_{\mathbf{n}}\rho^n=\partial_{\mathbf{n}}\Delta \rho^n =0 \; on  \;\Gamma,\label{eq:5r}\\ \nonumber
  &\partial_t \rho + \mathbf{v}\cdot \nabla \rho- \Delta \theta = 0;\; \rho(0)= \rho_0,\;
  \partial_{\mathbf{n}}\rho =\partial_{\mathbf{n}}\Delta \rho=0 \;on  \;\Gamma,\\ \nonumber
\end{align}
respectively.

    Now $\tau^n = \rho^n -\rho$ and $\Theta^n  =\theta^n-\theta$ satisfy
    \begin{align}
     \partial_t \tau^n + \mathbf{v}^n\cdot \nabla \tau^n-  \Delta \Theta^n = -(\mathbf{v}^n-\mathbf{v})\cdot \nabla \rho,\label{eq:r35}\\\nonumber
     \tau^n(0)= 0,\;
        \partial_{\mathbf{n}}\tau^n =\partial_{\mathbf{n}}
        \Theta=0 \;on \;\Gamma.
\\ \nonumber
\end{align}
 The estimates (\ref{eq:2r}), (\ref{eq:5r}), Gronwall's lemma and
Proposition  1  yield  that $\tau^n\rightarrow 0$ in $L_2(0,\bar
{T};H^2)\cap L_{\infty}(0,\bar {T};H^1)$.

Now, let $\mathbf{u}^n, \mathbf{u}$ be the solutions of

\begin{align}
 &\bar{\rho}^n\partial_t \mathbf{u}^n -  \Delta \mathbf{u}^n +\bar{\rho}^n \mathbf{v}^n\cdot \nabla \mathbf{v}^n -
 (\mathbf{v}^n\cdot \nabla)\nabla \theta^n+(\nabla \theta^n\cdot
\nabla)\mathbf{v}^n+\label{eq:6r}\\ \nonumber
&\nabla \cdot(\frac{1}{\bar{\rho}^n}(\nabla
\theta^n\otimes \nabla \theta^n))+ \nabla \pi^n - \bar{\rho}^n\mathbf{f} = 0,\\ \nonumber
   &\bar{\rho}\partial_t \mathbf{u}  -  \Delta \mathbf{u} + \bar{\rho} \mathbf{v}\cdot \nabla \mathbf{v}
   -(\mathbf{v}\cdot \nabla)\nabla \theta +(\nabla \theta\cdot
   \nabla)\mathbf{v}+\\ \nonumber
   &\nabla \cdot(\frac{1}{\bar{\rho}}\nabla \theta\otimes
   \nabla \theta))+ \nabla \pi -\bar{\rho} \mathbf{f} = 0,\\ \nonumber
\end{align}
with $\mathbf{u}^n (0) = \mathbf{u}(0) =\mathbf{u}_0$, respectively.

 Now $\mathbf{U}^n = \mathbf{u}^n-\mathbf{u} $ satisfies
   \begin{align}
  & \bar{\rho}\partial_t\mathbf{U}^n - \Delta \mathbf{U}^n = H(\mathbf{v}^n,\mathbf{v},\rho^n,\rho) - \nabla (\pi^n - \pi)  -\tau^n \partial_t \mathbf{u}^n.
    \label{eq:7r}\\ \nonumber
    \end{align}

 It is easy to  trace  $H(\cdot)$ and to prove that  $H(\mathbf{v}^n,  \mathbf{v},\rho^n,\rho)\rightarrow 0$ as $n\rightarrow \infty$ in
$L_2(Q_T).$
Indeed,
$$\bar{\rho}(\mathbf{v}^n\cdot \nabla \mathbf{v}^n-\mathbf{v}\cdot \nabla \mathbf{v})=$$
$$\bar{\rho}((\mathbf{v}^n\cdot \nabla \mathbf{v}^n-\mathbf{v}\cdot \nabla \mathbf{v}^n)+
(\mathbf{v}\cdot \nabla \mathbf{v}^n-\mathbf{v}\cdot \nabla \mathbf{v}))\rightarrow 0,$$
thanks to (\ref{eq:g}).

 Similarly, we  can  proceed  for  every  term in $H(\cdot)$.

Now, multiplying (\ref{eq:7r}) by $\partial_t\mathbf{U}^n$, after
integration by parts, we get
$$ |\nabla \mathbf{U}^n(t)|^2_2  + \int_0^t|\sqrt {\bar{\rho}}\partial_t\mathbf{U}^n(\tau)|^2_2d\tau \leq c
\int_0^t(\|H\|^2_2+\|\tau^n\|_{L_{\infty}(Q_T)}^2|\partial_t\mathbf{u}^n|_2^2)d\tau.
$$

\noindent
Gronwall's lemma produces   $\mathbf{U}^n\rightarrow 0$ in $L^2(Q_{\bar
T})$. Thus the map $ G$ is continuous in  $L^2(Q_{\bar T})$.

The
uniqueness can be proved making  use  of  the  procedure in  theorem 6.1.
 We omit details.  The existence of a
local solution is completely proved.
\end{proof}

\section{Appendix:  Phase  field  model}

 A multi-phase  flow  consists of $n$-fluid constituents which occupy  a, possibly time-dependent, region $\Omega$ whereas  $\Omega_i\subseteq\Omega$ is the region
occupied by constituent $i$; $i= 1,...,n $  denotes   the quantities pertaining to the corresponding
constituent.  The  multi-phase  flow is  quite  naturally  framed within  the  scheme of   fluid     mixtures.
 In the   mixture model   the  principles  of  continuum  mechanics
for  a  single  phase  are  generalized to  several inter-penetrable  continua.  The   basic  assumption
is  that, at  any  instant of  time, all  phases  are  present  at  every  material point. The  equations
 of  balance are  postulated  for   mass  and  momentum  conservation.  Furthermore,  constitutive
 relations  are  required   to  close  the  system  of  equations.

 Let $\rho_i$ and $r_i$ be the mass density and the mass growth. The balance of mass requires that
\begin{align}
&\partial_t \rho_i +\nabla\cdot\rho_i \mathbf{v}_i = r_i, \label{eq:t1} \\ \nonumber
\end{align}
$\mathbf{v}_i$  is the  divergence free  velocity.

\noindent
The mass conservation implies that

$$\sum_{i=1}^n r_i=0.$$
We define the mass density $\rho$    and the mean mass  velocity
$\mathbf{w} $ of the mixture as
 $$\rho :=\sum_{i=1}^n \rho_i, \quad \mathbf{w}:=\frac{1}{\rho}\sum_{i=1}^n \rho _i\mathbf{v}_i.$$

Other average  velocities  can  be   defined,  for   example,   the
volume   average  velocity.

\noindent
$ \mathbf{w}$,  not  solenoidal,  is  used in the  momentum and  energy   balance.
Consequently, summation of  (\ref{eq:t1}) over i  and account of mass  conservation give
$$\partial_t\rho+\nabla\cdot\rho\mathbf{ w} =0.  $$
Let $c_i$ and $\mathbf{u}_i$,
$$c _i:=\frac{\rho_i}{\rho},  \quad \mathbf{u}_i:= \mathbf{v}_i-\mathbf{w},$$
be  the  concentration and  the  relative  (or diffusion) velocity of  constituent  $i$, we  obtain
\begin{align*}
&\partial_tc_i\rho+\nabla\cdot\rho (\mathbf{w} +\mathbf{u}_i)=r_i.\notag\\
\end{align*}
By  the  continuity equation, we  get
\begin{align*}
&\rho(\partial_tc_i+\nabla\cdot c_i \mathbf{w}) =\tau_i-\nabla\cdot \mathbf{J}_i,\notag\\
\end{align*}
where
 $$\mathbf{J}_i= \rho_i\mathbf{u}_i$$
is the diffusion flux of constituent $i$.

\noindent
By definition, it follows that

$$\sum _{i=1}^n\mathbf{J}_i=0.$$
The continuity equation (\ref{eq:t1}) can also be written
\begin{align*}
&\partial_t \rho_i +\nabla\cdot\rho_i\mathbf{w} =r_i - \nabla \cdot \mathbf{J}_i.\notag\\
\end{align*}
The balance of linear momentum can  be  written
\begin{align*}
&\rho_i\partial_t\mathbf{v}_i +\rho_i\mathbf{v}_i\cdot \nabla\mathbf{v}_i   =\nabla \cdot \mathbf{ T}_i  +
\rho_i \mathbf{ f}_i  + \mathbf{g}_i\; \\ \nonumber
&i=1,...,n,\\ \nonumber
\end{align*}
where   $\mathbf{T}_i$ is  the Cauchy stress  tensor, $\mathbf{
f}_i $ the body force, $\mathbf{g}_i $ the supply of linear
momentum from the other constituents.

\noindent
The  whole mixture may  be  viewed as  a single  body. The  balance  of  linear  momentum is  written   as
\begin{align}
&\rho\partial_t \mathbf{w} +\rho \mathbf{ w}\cdot \nabla\mathbf{ w}  =\nabla \cdot\mathbf{ T} +
\rho \mathbf{ f} .\label{eq:t2}\\ \nonumber
\end{align}
The  balance  of  the  angular momentum results  in  the  symmetry
of $ \mathbf{ T}$.

The  continuity  equation is, in absence of reaction,
\begin{align}
&\partial_t \rho +\nabla\cdot\rho \mathbf{ w} =0. \label{eq:t3}\\ \nonumber
\end{align}
In general,  $\mathbf{ w} $  is  not  divergence  free  even $\nabla
\cdot \mathbf{ v}_i   =0$,  $\forall i$.

\noindent
Now we consider  a  mixture of  two miscible  fluids  which before
mixing  are each  incompressible. In  their  unmixed  states let the
density  of  fluids (1) and  (2)  be     $\rho_{10}$  and
$\rho_{20}$ (constant).

\noindent
We  deduce  an incompressible   model following  the  approach  in \cite{rif2}, \cite{rif11}.

In  the  mixture, the  densities  of  the
fluids  at  point  $x$  and  at time  $t$  are  denoted
$\rho_1(x,t):=\rho_1$ ,  $\rho_2(x,t):=\rho_2$, respectively.  Then
from volume additivity of  the two  constituents  at  the  outset,
$$\frac{\rho_1}{\rho_{10}}+ \frac{\rho_2}{\rho_{20}}=1.$$
The  total  density  $\rho(x,t)$   in  mixture  is  defined  by
 $\rho =   \rho_1 +\rho_2$.

\noindent
The  balance  of mass in  the  mixture  gives
\begin{align}
&\partial_t\rho_1 +\nabla \cdot \rho_1\mathbf{ v}_1 =r_1, \label{eq:t4}\\ \nonumber
& \partial_t\rho_2 +\nabla \cdot \rho_2\mathbf{ v}_2 =r_2.\\ \nonumber
\end{align}
The  mean mass velocity $\mathbf{ w} $ is
$$\rho \mathbf{ w} =\rho_1 \mathbf{ v}_1 +\rho_1 \mathbf{ v}_2 .$$
The  continuity  equation for  the  mixture  is
$$\partial_t\rho +\nabla \cdot \rho \mathbf{ w}=0.$$
Now  divide  $(\ref{eq:t4})_1$ by  $\rho_{10}$, and $(\ref{eq:t4})_2$
by $\rho_{20}$, respectively,  and add
 to obtain
\begin{align}
&\partial_t (\frac{\rho_1}{ \rho_{10}}+\frac{\rho_2}{ \rho_{20}} )+\nabla\cdot(\frac{\rho_1}{ \rho_{10}} \mathbf{v}_1+
\frac{\rho_2}{ \rho_{20}}\mathbf{v}_2) =0.\label{eq:t5}\\ \nonumber
\end{align}
Define the  mean volume    velocity of  the   mixture
$$\mathbf{v}=
\frac{\rho_1}{ \rho_{10}}\mathbf{v}_1 + \frac{\rho_2}{ \rho_{20}}\mathbf{v} _2.$$
From (\ref{eq:t5}) we find $\nabla\cdot \mathbf{v} =0.$

Now,  we  derive  the  fundamental relation
$$\mathbf{w}=  \mathbf{v} -\frac{\lambda}{\rho} (\nabla \rho+ D\nabla \Delta \rho)$$
where  $\lambda $ is  the   diffusion coefficient, and $D$ the   mobility  coefficient,
making  use  of the  generalized Fick's  law of  diffusion
$$   \mathbf{v}_1 =\mathbf{w}-\lambda \frac{\nabla \omega(c)}{c},$$

\noindent
where  $\omega(c)$  is  the  chemical  potential
and  $c =  \frac{\rho_1}{\rho}$ is  the mass  concentration.

\noindent
If  $D=0$  we  obtain  the  relation  considered   by
Kazhikhov-Smagulov \cite{rif14}.

This  relation  is  important  because expresses the vector
$\mathbf{w}$  in terms  of  a  divergence  free vector. Inserting
this  relation  in  the  linear momentum  balance  we  obtain a  generalization of the
  system  considered by  Kazhikhov-Smagulov. For  completeness  we  report  a  proof given  in \cite{rif11}.

\noindent
Denote  $\alpha=\frac{\rho_1}{\rho_{10}}$  the  volume  concentration  of  constituent 1, so
$\frac{\rho_2}{\rho_{20}}=  1-\alpha$,  consequently
\begin{align*}
&\mathbf{v}=\alpha \mathbf{v}_1+(1-\alpha)\mathbf{v}_2,\;
1-c=\frac{\rho -\rho_1}{\rho}=\frac{\rho_2}{\rho},\notag\\
\end{align*}
hence
$$\mathbf{w}= c\mathbf{v}_1+(1-c)\mathbf{v}_2.$$
Observing  that
$$c=\frac{\alpha\rho_{10}}{\rho}, \quad \rho=\rho_1+\rho_2= \alpha\rho_{10}+(1-\alpha)\rho_{20},$$
it is
$$\alpha=\frac{\rho -\rho_{20}}{\rho_{10} -\rho_{20}}.$$
By  differentiation     we   get
\begin{align*}
&\nabla c=  \frac{\rho_{10}\nabla\alpha}{\rho}- \frac{\alpha\rho_{10}\nabla\rho}{\rho^2}, \;
\nabla  \rho=(\rho_{10}- \rho_{20})\nabla \alpha, \\
&\nabla \alpha= \frac{\nabla  \rho}{\rho_{10}- \rho_{20}}.\\
\end{align*}
In  addition
$$
c=\frac{\alpha\rho_{10}}{\rho}= \frac{\rho_{10}}{\rho}\big(\frac {\rho-
\rho_{20}}{\rho_{10}- \rho_{20}}\big ),$$
and,  from the  previous relations, it    holds
$$ {\nabla c}= \frac{\rho_{10}}{\rho}\frac{\nabla \rho}  {\rho_{10} -\rho_{20}}-
\frac{\rho_{10}}{\rho ^2}\nabla  \rho\frac{\rho -\rho_{20}}  {\rho_{10} -\rho_{20}}
=\frac{\rho_{10}\rho_{20}\nabla \rho}{
(\rho_{10} -\rho_{20})\rho^2}.$$
Consequently,  upon simplification,
\begin{align}
&\frac{\nabla c}{c}=\frac{\rho_{20}\nabla \rho}{\rho(\rho -\rho_{20})}.
\label{eq:t0}\\ \nonumber
\end{align}
Next, we  eliminate  $\mathbf{v}_2$  from the  relations
$$ \mathbf{v}= \alpha \mathbf{v}_1 + (1-\alpha)\mathbf{v}_2,  \; \mathbf{w}= c\mathbf{v}      _1 + (1-c)\mathbf{v}_2,  $$
to  find
\begin{align}
&\mathbf{v}_1\big(\frac{\alpha}{1-\alpha}-\frac{c}{1-c}\big) =
\frac{1}{1-\alpha}\mathbf{v}-\frac{1}{1-c}\mathbf{w}.
\label{eq:t6}\\ \nonumber
\end{align}
By noting
$$\frac{\alpha}{1-\alpha}=\frac{\rho_1\rho_{20}}{\rho_2\rho_{10}},\;\;
 \frac{c}{1-c}=\frac{\rho_1}{\rho_2},$$
so that
 $$\frac{\alpha}{1-\alpha}-
 \frac{c}{1-c}=\frac{\rho_1}{\rho_2}(\frac{\rho_{20}}{\rho_{10}}-1),$$
 and  then, from (\ref{eq:t6}),
\begin{align}
 & \mathbf{v}_1=  \frac{\rho_2}{\rho_1}\frac {\rho_{10}}{\rho_{20}-
 \rho_{10}}\big[\frac{\rho_{20}}{\rho_2}\mathbf{v}  -\frac{\rho}{\rho_2}\mathbf{w}\big].
\label{eq:t7}\\ \nonumber
\end{align}
We now  eliminate $\mathbf{v}_1$   between (\ref{eq:t7})  and  the generalized  Fick's law,  we  obtain
$$\mathbf{w} \big[\frac{\rho_1(\rho_{20}- \rho_{10})+ \rho\rho_{10} }{\rho_1(\rho_{20}- \rho_{10})}\big]=
\frac{\rho_{10}\rho_{20}}{\rho_1( \rho_{20}-\rho_{10})} \mathbf{v} +
  \frac{\lambda}{c}\nabla \omega(c).$$
   Now  the  coefficient  of  $\mathbf{w}$  can  be  written
\begin{align}
&\frac{\rho_1\rho_{20}+\rho_2\rho_{10}}{\rho_1(\rho_{20}-\rho_{10})}.\label{eq:t8}\\ \nonumber
\end{align}
Since $\rho=\rho_1 +\rho_2$,  $\frac{\rho_1}{\rho_{10}}
=1-\frac{\rho_2}{\rho_{20}}$,  we  have
$\rho_1=\frac{\rho_{10}}{\rho_{20}}(\rho_{20}-\rho_2)$   and hence
\begin{align}
 &\rho_1\rho_{20}+\rho_2\rho_{10}=\rho_{10}\rho_{20}\label{eq:t9}\\ \nonumber
 \end{align}
Using  (\ref{eq:t8}) and   (\ref{eq:t9})
the  coefficient of   $ \mathbf{w}$  simplifies and
$$\mathbf{w}\frac{\rho_{10}\rho_{20}}{\rho_1(\rho_{20}- \rho_{10})} =
  \mathbf{v}\frac{\rho_{10}\rho_{20}}{\rho_1(\rho_{20}-
\rho_{10})}+\frac{\lambda}{c}\nabla \omega(c),$$
and  finally
\begin{align}
&\mathbf{w} =\mathbf{v} +\frac{\lambda}{c}\nabla \omega(c)\frac{\rho_1(\rho_{20}- \rho_{10})}{\rho_{10}\rho_{20}}.\label{eq:t10}\\ \nonumber
\end{align}

Bearing in  mind (\ref{eq:t0} )  and
$\omega(c) =   c  -   D \Delta c,$
we    obtain

$$\nabla\omega(c)= \frac{\rho_{10}\rho_{20}}{\rho_{10} -\rho_{20}}(\frac{\nabla \rho}{\rho^2}-
 D(\frac{\nabla\Delta \rho}{\rho^2} +\nabla\nabla\rho\nabla\frac{1}{\rho^2}+\nabla\cdot(\nabla\rho\nabla\frac{1}{\rho^2}). $$

   Discarding  the  terms  $\rho^{-\gamma}$  with $\gamma \geq 3$, follows
\begin{align}
  &\nabla\omega(c)= \frac{\rho_{10}\rho_{20}}{\rho_{10} -\rho_{20}}(\frac{\nabla \rho}{\rho^2}-
    D\frac{\nabla\Delta \rho}{\rho^2}),  \label{eq:t11}\\ \nonumber
\end{align}
and inserting  (\ref{eq:t11})   in   (\ref{eq:t10})   we   obtain
\begin{align}
&\mathbf{w}=\mathbf{v}  -
\frac{\lambda}{\rho}(\nabla  \rho- D\nabla\Delta  \rho).\label{eq:t12}\\ \nonumber
\end{align}

\section{ Derivation   of   the  incompressible   model}

We  look (\ref{eq:t12}) as   a Helmholtz-type decomposition of  the  vector $\mathbf{w}$.
Consider  the  equations  governing   flow   of   a  binary  mixture   obtained  above
\begin{align}
&\rho\partial_t \mathbf{w} +\rho\mathbf{w}\cdot\nabla\mathbf{w} =\nabla \cdot \mathbf{T} + \rho\mathbf{f},\label{eq:t13}\\ \nonumber
&\partial_t \rho +\nabla\cdot\rho \mathbf{w} =0.\\ \nonumber
\end{align}
It is  straightforward to  deduce   from   $(\ref{eq:t13})_2$  using   (\ref{eq:t11}) that   the  continuity
  equation becomes
 \begin{align}
   &\partial_t \rho +\nabla\cdot\rho \mathbf{v}-\lambda\Delta(\rho-D\Delta \rho)=0.
    \label{eq:t14}\\ \nonumber
    \end{align}
Now,  upon substituting  (\ref{eq:t12})  in (\ref{eq:t13})and  setting $\theta =  \rho  -D\Delta \rho$ we  find
\begin{align*}
&\rho\partial_t\mathbf{ v} +\lambda\frac{\nabla\theta}{\rho}\partial_t\rho -\lambda\partial_t\nabla\theta
+\rho \mathbf{ v}\cdot\nabla \mathbf{ v}-
\lambda\nabla \theta\cdot \nabla \mathbf{ v}-\\ \nonumber
& \lambda \rho \mathbf{ v}\cdot\nabla\frac{\nabla \theta }{\rho}+
\nabla \pi -\mu \Delta \mathbf{ v}+\mu  \lambda\Delta \frac{\nabla \theta}{\rho}+
\eta \nabla \nabla\cdot \frac{\nabla \theta}{\rho}   +\\ \nonumber
&\lambda^2\nabla \theta \cdot\nabla \frac{\nabla\theta}{ \rho} =\rho \mathbf{ f}.\\ \nonumber
 \end{align*}

Using   the   continuity  equation to  substitute for  $\partial_t\rho$    in  the  above  equation   we   find

\begin{align*}
&\rho\partial_t \mathbf{ v} +\rho \mathbf{ v}\cdot\nabla \mathbf{ v} +
\lambda \nabla \theta\cdot \nabla \mathbf{v}-\\ \nonumber
&\lambda  \mathbf{ v}\cdot\nabla{\nabla\theta}+
\nabla \pi
 -\mu \Delta \mathbf{ v} +\\ \nonumber
&\lambda^2(\nabla\cdot(\frac{1}{\rho}\nabla \theta \otimes\nabla {\theta})
  =\rho \mathbf{ f}\\ \nonumber
\end{align*}
where  $\pi  = \eta \lambda\nabla \cdot \frac{\nabla \theta}{\rho}-\partial_t \theta- \lambda
\mu  \Delta log\rho -
\mu D\Delta \frac{\Delta \rho}{\rho}$ and
 we have discarded the non linear  term $\Delta (\nabla\rho^{-1}\Delta \rho)$, for  convenience.

 Neglecting all terms of  O($\lambda^2)$ we find   a  model  type
Kazhikhov-Smagulov,

\noindent
and   it   is  thus    a  model  for  small
diffusion.

\begin{align*}
&\rho\partial_t \mathbf{ v} +\rho \mathbf{ v}\cdot\nabla \mathbf{ v}
-\lambda\nabla \theta\cdot\nabla \mathbf{ v}
-\lambda  \mathbf{ v}\cdot\nabla\nabla \theta +\\
&\nabla \pi+
\mu \Delta \mathbf{ v}=
\rho \mathbf{ f}.\\
\end{align*}
Instead, if  $D=0$ we find   the
model  studied, completely, in \cite{rif33}.

\end{document}